\newcolumntype{x}[1]{>{\centering\arraybackslash\hspace{0pt}}p{#1}}
\theoremstyle{definition}
\newtheorem{theorem}{Theorem}[section]
\newtheorem{definition}[theorem]{{{Definition}}}
\newtheorem{example}[theorem]{{{Example}}}
\newtheorem{notation}[theorem]{{{Notation}}}
\newtheorem{remark}[theorem]{{{Remark}}}
\newtheorem{corollary}[theorem]{{{Corollary}}}
\newtheorem{proposition}[theorem]{{{Proposition}}}
\newtheorem{lemma}[theorem]{{{Lemma}}}
\newcommand{\numberset}{\mathbb}
\newcommand{\At}{\mathrm{At}}
\newcommand{\N}{\numberset{N}}
\newcommand{\Z}{\numberset{Z}}
\newcommand{\F}{\numberset{F}}
\newcommand{\C}{\mathcal{C}}
\newcommand{\M}{\mathcal{M}}
\newcommand{\mU}{\mathcal{U}}
\newcommand{\kL}{\mathcal{L}}
\newcommand{\mZ}{\mathcal{Z}}
\newcommand{\mH}{\mathcal{H}}
\newcommand{\wt}{\textnormal{wt}}
\newcommand{\Fq}{\F_q}
\newcommand{\Fm}{\F_{q^m}}
\newcommand{\mL}{\mathcal{L}}
\newcommand{\mI}{\mathcal{I}}
\newcommand{\mS}{\mathcal{S}}
\newcommand{\hh}{\textnormal{dim}}
\newcommand{\rk}{\textnormal{rk}}
\DeclareMathOperator{\zero}{\mathbf{0}}
\DeclareMathOperator{\one}{\mathbf{1}}
\DeclareMathOperator{\GL}{GL}
\DeclareMathOperator{\PG}{PG}
\DeclareMathOperator{\cl}{cl}
\DeclareMathOperator{\cyc}{cyc}
\DeclareMathOperator{\Cl}{Cl}
\DeclareRobustCommand{\qq}{\emph{q}}
\def\namedlabel#1#2{\begingroup
    #2%
    \def\@currentlabel{#2}%
    \phantomsection\label{#1}\endgroup
}
\title{The free product of \qq-matroids}
\author{Gianira N. Alfarano}
\author{Eimear Byrne}
\author{Andrew Fulcher}
\address{University College Dublin, Ireland.}
\email{gianira.alfarano@gmail.com, ebyrne@ucd.ie, andrew.fulcher@ucdconnect.ie}
\begin{document}

\begin{abstract}
    We introduce the notion of the free product of $q$-matroids, which is the $q$-analogue of the free product of matroids.  We study the properties of this noncommutative binary operation, making an extensive use of the theory of cyclic flats. We show that the free product of two $q$-matroids $M_1$ and $M_2$ is maximal with respect to the weak order on $q$-matroids having $M_1$ as a restriction and $M_2$ as the complementary contraction. We characterise $q$-matroids that are irreducible with respect to the free product and  we prove that the factorization of a $q$-matroid into a free product of irreducibles is unique up to isomorphism. We discuss the representability of the free product, with a particular focus on rank one uniform $q$-matroids and show that such a product is represented by clubs on the projective line.
\end{abstract}

\maketitle

\noindent {\bf Keywords.} $q$-matroids, free product, cyclic flats, representability, clubs.

\section{Introduction}

The notion of a $q$-matroid was introduced by Crapo \cite{crapo1964theory}, however, since their rediscovery in 2018 in \cite{jurrius2018defining}, the study of $q$-matroids has attracted a great deal of attention due to the link to rank-metric codes; see for instance \cite{glues2021qpolyrankmetcode, gorla2019rank, ghorpade2020polymatroid, shiromoto2019codes}.  
Recently, the direct sum of $q$-matroids has been introduced in terms of the rank function; see~\cite{ceria2024direct}. The study of this operation highlighted many differences between matroid theory and $q$-matroid theory. 

The free product of matroids was introduced by Crapo and Schmitt in \cite{crapo2005free}. The same authors used it as a tool to prove Welsh’s 1969 conjecture~\cite{welsh1969bound}, which gives a lower bound on the number of isomorphism classes of matroids defined on a ground set of cardinality $n$. Moreover, in \cite{crapo2005unique}, it was shown that the free product of matroids $M$ and $N$ on ground sets $S$ and $T$ is the unique matroid with the most independent sets among all matroids on $S \cup T$ whose restriction to~$S$ and whose complementary contraction by $S$ are $M$ and $N$, respectively. They also showed that the direct sum of matroids has the most dependent sets in this class of matroids. In other words, the direct sum and the free product are the minimal and maximal element, respectively, with respect to the weak order among matroids whose restrictions to $S$ and complementary contraction by $S$ are $M$ and $N$, respectively.

As for matroids, there are many equivalent ways to describe a $q$-matroid axiomatically. An exposition of such \emph{cryptomorphisms} has been given in~\cite{byrne2022constructions}, in terms of the rank function, independent spaces, flats, circuits, bases, spanning spaces, the closure function, hyperplanes, open spaces etc. Another cryptomorphism based on cyclic flats was shown in \cite{alfarano2022cyclic}. These are, in general, not straightforward $q$-analogues of the traditional matroid cryptomorphisms.

In this paper we introduce the free product of a pair of $q$-matroids. We give three cryptomorphic descriptions of the free product, namely in terms of its independent spaces, its rank function, and its lattice of cyclic flats. 
We show that while the free product is the maximal element with respect to the weak order among a particular class of $q$-matroids, the direct sum is \emph{not} in general the minimal element, in contrast to the matroid case.
We then establish that any $q$-matroid can be uniquely factorised (up to isomorphism) into irreducible components with respect to the free product. We use the theory of cyclic flats of $q$-matroids, recently developed in \cite{alfarano2022cyclic, gluesing2023decompositions} as a key tool for many of our results.  

We also study the representability of the free product in terms of finite geometry, again making use of the lattice of cyclic flats. Geometric descriptions of representable $q$-matroids in terms of $q$-systems (see \cite{sheekey2019scatterd, randrianarisoa2020geometric})  have been given in \cite{alfarano2022cyclic}.
We describe representations of the free product (if one exists) and then we focus on representations of the free product of uniform $q$-matroids.
These are special $q$-matroids, which are representable as \emph{maximum rank distance (MRD) codes} or, equivalently, as \emph{scattered subspaces}. Inspired by \cite{alfarano2024representability}, using properties of cyclic flats, we prove that if a $q$-system is a representation of the free product of two uniform $q$-matroids, then it is \emph{evasive}.
Moreover we show the converse in the case of the free product of rank one uniform $q$-matroids, by relating it to the existence of \emph{clubs}.

This remainder of this paper is organized as follows.
Section \ref{sec:preliminaries} contains the required preliminary material. In section \ref{sec:cryptomorphisms}, we define the free product of two $q$-matroids in terms of the independent spaces of the factors. We then describe the lattice of cyclic flats of the free product, as well as an explicit expression of the rank function. 
In section \ref{sec:weak ordering}, we establish several fundamental properties of the free product; we give a duality result and establish associativity of the free product. 
We consider the weak order on a special subclass $\M_q(M_1,\dots,M_\ell)$ of $q$-matroids (see Definition \ref{def:poset}). We prove that the free
product of $q$-matroids is maximal in $\M_q(M_1,\dots,M_\ell)$ with respect to the weak order mentioned above. We consider decomposition properties of $q$-matroids with respect to the free product and show that reducibility is equivalent to the existence of free separators. We show, through a sequence of results, that any $q$-matroid can be factorised (up to isomorphism)
uniquely into irreducible components. As a corollary, we obtain a positive result on a $q$-analogue of Welsh's conjecture.  
In section \ref{sec:representability}, we investigate the representability of the free product. We show that if the free product is representable then it is so by a block upper triangular matrix whose diagonal blocks are representations of the factors. Then we focus on the representation of the free product of uniform $q$-matroids. We study its geometric description and we show that a $q$-system is a representation of the free product of rank one uniform $q$-matroids if and only if it gives rise to a club on the projective line. In section \ref{sec:conclusions}, we conclude with some open problems.

\section{Background}\label{sec:preliminaries}
In this section we establish the notation and provide the background material for the rest of the paper. 
We start by recalling some definitions concerning lattices. For a more detailed treatment, we refer the interested reader to \cite{Birkhoff}.

\begin{definition}
	Let $(\mathcal{L}, \leq)$ be a partially ordered set (poset). Let $a,b,v \in \mathcal{L}$.
	We say that $v$ is an \textbf{upper bound} (resp. \textbf{lower bound}) of $a$ and $b$ if $a\leq v$ and $b\leq v$ (resp. $v\leq a$ and $v\leq b$) and furthermore, we say	that $v$ is a \textbf{least upper bound}  (resp. \textbf{greatest lower bound}) of $a$ and $b$ if $v \leq u$ for any $u \in  \mathcal{L} $   that is also an upper bound of $a$ and $b$ (resp. $u\leq v$ for any $u\in\mL$ that is also a lower bound of $a$ and $b$).  
    If a least upper bound (resp. greatest lower bound) of $a$ and $b$ exists, then it is unique, is denoted by $a\vee b$ (resp. $a\wedge b$), which is called the \textbf{join} of $a$ and $b$ (resp. \textbf{meet}).
    The poset $\mathcal{L}$ is called a \textbf{lattice} if each pair of elements has a least upper bound and a greatest lower bound and it is denoted by $(\mathcal{L}, \leq, \vee, \wedge)$. 
    An element in $\mL$ that is not smaller than any other element is called \textbf{maximal} element of $\mL$ and it is denoted by $\boldsymbol{1}_{\mL}$ and an element that is not bigger than any other element is called \textbf{minimal} element of $\mL$ and it is denoted by $\boldsymbol{0}_{\mL}$. If there is no confusion, we simply write $\boldsymbol{1}$ and $\boldsymbol{0}$.
\end{definition}

\begin{definition}
Let $\mathcal{L}$ be a lattice with meet $\wedge$ and join $\vee$.
Let $a, b\in\mathcal{L}$ be such that $a \leq b$.
\begin{enumerate}
    \item An {\bf interval} $[a,b]\subseteq\mathcal{L}$ is the set of all $x\in\mathcal{L}$ such that $a\leq x\leq b$. It defines the {\bf interval sublattice} $([a,b],\leq,\vee,\wedge)$.
    \item If $[a,b]\subseteq \mL$ is such that for any $x\in \mL$, $x\in[a,b]$ implies that $x=a$ or $x=b$, then $b$ is called a {\bf cover} of $a$ and we write $a\lessdot b$.
    $\mH([a,b]):=\{x \in [a,b] : x \lessdot b\}$. We also define
    $\At(b):=\{x \in [\zero,b] : \zero \lessdot \;x \}$ and $\mH(b):=\{x \in [\zero,b] : x \lessdot b\}$.
    \item 
    A {\bf chain} from $a$ to $b$ is a totally ordered subset of $[a,b]$ with respect to $\leq$. A chain from $a$ to $b$ is called a {\bf maximal} chain in $[a,b]$ if it is not properly contained in any other chain from $a$ to $b$. 
    A finite {\bf chain} from $a$ to $b$ is a sequence of the form
    $ a = x_1 < \cdots < x_{k+1}=b $ with $x_j\in\mathcal{L}$ for $j \in  \{1,\dots,k\}$, in which case we say that the chain has length $k$.
\end{enumerate}
\end{definition}

Let $\F_q$ be the finite field with $q$ elements and let $E$ be an $n$-dimensional vector space over $\F_q$. In this paper, we are interested in the subspace lattice $(\mathcal{L}(E), \leq, \vee, \wedge)$, which is the lattice of $\mathbb{F}_q$-subspaces of $E$, ordered with respect to inclusion and for which the join is the usual vector space sum and the meet is the subspace intersection. That is, for all subspaces $A,B \in \mL(E)$ we have:
$ A \vee B = A + B, \; A \wedge B = A \cap B.$
The minimal element of $\mL(E)$ is $\zero=\langle 0 \rangle$ and its maximal element is $\textbf{1}=E$. For the sake of simplicity, we write $\zero$ to denote the minimal element of a subspace lattice of a vector space of any dimension.
For each $U \in \mL(E)$, we write $U^\perp$ to denote the orthogonal complement of $U$ with respect to a fixed non-degenerate bilinear form on $E$. The map $U\mapsto U^\perp$ is an involutory anti-automorphism of~$\mL(E)$.

\begin{definition}
    A \textbf{$q$-matroid with ground space} $E$ is a pair $M=(E,r)$, where $r$ is an integer-valued function defined on $\mL(E)$ with the following properties:
	\begin{enumerate}
		\item[\textbf{(R1)}] Boundedness: $0\leq r(A) \leq \hh(A)$, for all $A\in \mL(E)$. 
		\item[\textbf{(R2)}] Monotonicity: $A\leq B \Rightarrow r(A)\leq r(B)$, for all $A,B\in\mL(E)$. 
		\item[\textbf{(R3)}] Submodularity: $r(A+ B)+r(A\cap B)\leq r(A)+r(B)$, for all $A,B\in\mL(E)$.  
	\end{enumerate}
 The function $r$ is called \textbf{rank function} and the value $r(M) := r(E)$ is the \textbf{rank of the $q$-matroid}.
\end{definition}

\begin{definition}
    Let $E_1$ and $E_2$ be $\Fq$-vector spaces of finite dimension $n$. Two $q$-matroids $M_1=(E_1,r_1)$ and $M_2=(E_2,r_2)$ are called {\bf isomorphic} 
    if there exists an $\Fq$-isomorphism $\tau: E_1 \longrightarrow E_2$ such that $r_2(\tau(V))=r_1(V)$ for all $V \in \mL(E_1)$. We also say that $M_1$ and $M_2$ are {\bf equivalent} $q$-matroids.
    We say that $M_1$ and $M_2$ are {\bf lattice-equivalent} if there exists a lattice isomorphism
    $\xi: \mL(E_1) \longrightarrow \mL(E_2)$ such that $r_2(\xi(V))=r_1(V)$ for all $V \in \mL(E_1)$.
\end{definition}

Let $M=(E,r)$ be a $q$-matroid. A one-dimensional space $x\in\mL(E)$ is a \textbf{loop} if $r(x)=0$. A codimension 1 subspace of $E$ that has rank less than $r(E)$ is called a \textbf{coloop}. A subspace $A\in\mL(E)$ is \textbf{independent} if $r(A)=\dim(A)$ and \textbf{dependent} otherwise. The inclusion-maximal independent spaces are called \textbf{bases} and the inclusion-minimal dependent spaces are called \textbf{circuits}. A space $A\in\mL(E)$ is a \textbf{flat} if it is inclusion-maximal in the set $\{V \in \mL(E) : r(V)=r(A)\}$, and a space is \textbf{cyclic} or \textbf{open} if it is the sum of circuits. Finally, a subspace which is both a flat and cyclic is called a \textbf{cyclic flat}. 

For a given $q$-matroid $M=(E,r)$, we have the following two operators. 
For each $A \in \kL(E)$, define $$\Cl_r(A):=\{x \in \kL(E): r(A+x)=r(A)\}.$$
The {\bf closure operator} of a $q$-matroid $(E,r)$ is the function defined by
\[\cl_r:\mathcal{L}(E) \to\mathcal{L}(E), \  A \mapsto \cl_r(A):=\sum_{x \in \Cl_r(A)} x .
\]
 The {\bf cyclic operator} of $M$ is the function defined by
\begin{align*}
\cyc_r:\mathcal{L}(E) \to\mathcal{L}(E), \ A \mapsto \cyc_r(A):=\sum_{\substack{C \leq A \\C \textnormal{ is cyclic}}} C.
\end{align*} 

If $M=(E,r)$, we denote by $\mathcal{I}(M)$ and $\mZ(M)$ the collection of independent spaces and cyclic flats of $M$ and by $\cl_r$ and $\cyc_r$ its closure and cyclic operators. 
If it is clear from the context, we will simply write $\mI, \mZ, \cl,\cyc$.

It is well known that the collection of independent spaces uniquely determines the $q$-matroid, and the same is true for the collections of dependent spaces, open spaces, flats and circuits.

For our purposes, we recall the following version of the independence axioms.

\begin{notation}
Let $\mathcal{A}$ be a collection of subspaces of $E$. For any subspace $X \in \mL(E)$, we define then the collection of \textbf{maximal subspaces} of $X$ in
$\mathcal{A}$ to be the collection of subspaces
    $$\max(X, \mathcal{A}) := \{A \in \mathcal{A} : A \leq X \textnormal{ and } B <X, B \in\mathcal{A} \Rightarrow \dim(B) \leq \dim(A)\}.$$
\end{notation}

\begin{definition}[\cite{byrne2022constructions}]\label{def:indep_ax}
    Let $\mI$ be a collection of subspaces of $E$. We define the following {\bf independence axioms}.
    \begin{enumerate}
        \item[\namedlabel{i1}{{\rm (I1)}}] $\mI\ne\emptyset$.
        \item[\namedlabel{i2}{{\rm (I2)}}] For all $I, J \in \mL(E)$, if $J \in\mI$ and $I \leq J$, then $I \in \mI$.
        \item[\namedlabel{i3}{{\rm (I3)}}] Lat $I,J \in\mI$ be such that $\dim I < \dim J$. Then there exists $x\leq J$, $x\nleq I$, such that $I+x\in\mI$.
        \item[\namedlabel{i4}{{\rm (I4'')}}] Let $A \in \mL(E)$ and let $I \in \max(A, \mI)$. Let $ x\in \mL(E)$ be a one-dimensional space. Then there exists $J\in\max(A + x, \mI)$ such that $J \leq I + x$.
    \end{enumerate}
    If $\mI$ satisfies the independence axioms, we say that $\mI$  is a collection of \textbf{independent spaces} and we denote by $(E,\mI)$ the $q$-matroid defined by $\mI$.
\end{definition}

Furthermore, in \cite{alfarano2022cyclic} it is shown that the cyclic flats, together with their rank values, uniquely determine the $q$-matroid. We recall the following cyclic flat axioms. 

\begin{definition}{\cite[Definition 3.1]{alfarano2022cyclic}}\label{def:axioms_cyclic_flats}
    Let $\mZ$ be a collection of subspaces of $E$ and let
    $f:\mZ \rightarrow \Z$ be a map. 
    We define the following {\bf cyclic flat} axioms.
    \begin{enumerate}
    \item[\namedlabel{z0}{{\rm (Z0)}}] $(\mZ,\leq,\vee ,\wedge)$ is a lattice with join $\vee$ and meet $\wedge$, such that for every $Z_1,Z_2\in~\mZ$, we have that $Z_1+Z_2\leq Z_1\vee Z_2$ and $Z_1\wedge Z_2\leq Z_1\cap Z_2$, respectively. 
    \item[\namedlabel{z1}{{\rm (Z1)}}] We have that $f(\boldsymbol{0}_\mZ)=0$, where $\boldsymbol{0}_\mZ$ is the minimal element of $\mZ$.
    \item[\namedlabel{z2}{{\rm (Z2)}}] For every $F,G\in\mZ$ such that $G< F$, we have: 
    \begin{align*}\label{eq:differenceranks}
        0<f(F)-f(G)<\hh(F) -\hh(G).
    \end{align*}
    \item[\namedlabel{z3}{{\rm (Z3)}}] For every $F,G\in\mZ$ we have:
    \begin{equation*} \label{eq:submodularity2}f(F)+f(G)\geq f(F \vee G) + f(F\wedge G) + \hh((F\cap G)/(F\wedge G)). \end{equation*} 
\end{enumerate}
If $(\mZ,f)$ satisfies the cyclic flat axioms, we say that $\mZ$ is a lattice of {\bf cyclic flats} with respect to~$f$.
\end{definition}

The following result explains how to determine the rank function of the entire $q$-matroid from the cyclic flats together with their rank values; see \cite{alfarano2022cyclic, gluesing2023decompositions}.

\begin{lemma}{\cite[Corollary 3.12]{alfarano2022cyclic}}\label{lem:rank_function_cyclic_flats}
    Let $M=(E,r)$ be a $q$-matroid and $\mZ$ be its collection of cyclic flats. For all $A\in\mL(E),$ we have that
    $$r(A)=\min\{r(Z)+\hh((A+Z)/Z) : Z\in\mZ\}.$$
\end{lemma}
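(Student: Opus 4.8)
The plan is to prove Lemma~\ref{lem:rank_function_cyclic_flats} by establishing two inequalities: that $r(A)$ is at most the claimed minimum, and that the minimum is achieved. The key fact I would exploit is that the closure operator and the rank function are intimately related, and that every flat of $M$ can be reached from a cyclic flat by a suitable ``adjustment''. First I would verify the easy direction, namely $r(A) \leq r(Z) + \hh((A+Z)/Z)$ for every cyclic flat $Z \in \mZ$. This follows from monotonicity \textbf{(R2)} and submodularity \textbf{(R3)}: since $r(A+Z) \leq r(A) + r(Z) - r(A \cap Z)$ is the wrong direction, I would instead argue directly that $r(A+Z) \leq r(Z) + \hh((A+Z)/Z)$ because adding the $\hh((A+Z)/Z)$ dimensions of $A+Z$ not already in $Z$ can raise the rank by at most that dimension (a standard consequence of submodularity applied inductively over a chain, or equivalently the ``local'' rank increase bound). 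Combining with $r(A) \leq r(A+Z)$ from \textbf{(R2)} gives the upper bound on $r(A)$ for each $Z$, hence $r(A)$ is at most the minimum over all $Z \in \mZ$.

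The harder direction is to exhibit a cyclic flat $Z$ achieving equality. The natural candidate is $Z := \cyc_r(\cl_r(A))$, the cyclic core of the closure of $A$, which is the largest cyclic flat contained in $\cl_r(A)$; alternatively one takes the cyclic flat produced by iterating closure and cyclic operators starting from $A$. I would first recall (or cite from \cite{alfarano2022cyclic}) that $\cl_r(A)$ is a flat with $r(\cl_r(A)) = r(A)$, and that applying $\cyc_r$ to a flat yields a cyclic flat whose rank can be controlled. The crucial point is to show that for this particular $Z$ we have $r(Z) + \hh((A+Z)/Z) = r(A)$. Since $Z \leq \cl_r(A)$, the added dimensions $(A+Z)/Z$ all lie in the closure and therefore contribute full rank, i.e. $r(A+Z) = r(Z) + \hh((A+Z)/Z)$, while simultaneously $r(A+Z) = r(A)$ because $A + Z \leq \cl_r(A)$ forces equal rank. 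Establishing both of these equalities simultaneously for the chosen $Z$ is the technical heart of the argument.

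The main obstacle will be controlling the behavior of the cyclic and closure operators precisely enough to guarantee that the dimensions in $(A+Z)/Z$ are genuinely ``free'' (rank-contributing) while the rank of $Z$ itself drops to exactly account for the dependent part of $A$. Concretely, I expect the difficulty to lie in proving that passing from $A$ to the cyclic flat $Z$ removes exactly the dependencies of $A$ and nothing more: one must show that $\cyc_r(\cl_r(A))$ captures all circuits contained in $\cl_r(A)$ and that the quotient $(A+Z)/Z$ consists only of independent directions relative to $Z$. I would handle this by an induction on $\hh(A) - r(A)$, the ``corank'' of $A$: if $A$ is independent then $A$ itself essentially works with $Z = \zero$, and otherwise $A$ contains a circuit $C$; replacing $A$ by $A$ together with $\cl_r(C)$ and tracking how rank and dimension change lets the induction proceed, since each step strictly reduces the corank while preserving $r(A)$. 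The bookkeeping to ensure the final $Z$ is simultaneously cyclic and a flat, and that the minimum is attained there rather than being strictly smaller, is where the cyclic flat axioms \ref{z2} and \ref{z3} will be invoked to rule out any spurious smaller value.
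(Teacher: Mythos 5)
Note first that the paper contains no proof of this lemma: it is imported verbatim as \cite[Corollary 3.12]{alfarano2022cyclic}, so your attempt can only be measured against the standard argument and the tools this paper does state. Your first paragraph is correct: $r(A)\leq r(Z)+\hh((A+Z)/Z)$ for every $Z\in\mZ$ follows from (R2) plus the unit-rank-increase bound obtained by applying submodularity along a chain of one-dimensional extensions. Your candidate $Z:=\cyc_r(\cl_r(A))$ for attainment is also the right one (and it is a genuine cyclic flat, since $\cyc_r$ of a flat is both open and a flat). But the justification you sketch for the key equalities is backwards: the fact that $A+Z\leq \cl_r(A)$ gives $r(A+Z)=r(A)$ (adjoining closure elements does not raise rank); it does not by itself show that the dimensions of $(A+Z)/Z$ ``contribute full rank'' over $Z$. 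The missing ingredient is precisely Lemma~\ref{lem_cyc_closure}: applying $r(X)-r(\cyc_r(X))=\hh(X)-\hh(\cyc_r(X))$ with $X=\cl_r(A)$ yields $r(A)-r(Z)=\hh(\cl_r(A))-\hh(Z)\geq \hh(A+Z)-\hh(Z)=\hh((A+Z)/Z)$, since $A+Z\leq\cl_r(A)$; combined with your first-paragraph inequality this forces equality at this $Z$, and the proof closes in two lines. Indeed the present paper's own Lemma~\ref{lem:cyccl_ineq}, specialized to $X=Y=A$, is literally this computation, so the ``technical heart'' you defer is a one-identity step, not a bookkeeping problem.

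Your third paragraph is where the plan would go astray if executed as written. The induction on the corank $\hh(A)-r(A)$, replacing $A$ by $A$ together with $\cl_r(C)$ for a circuit $C$, is both unnecessary and underspecified: you never verify that the process terminates at a space that is simultaneously cyclic and a flat, nor that $r(A)$ and the relevant quotient dimension are controlled at each step (note that $\cl_r$ of a circuit need not be cyclic). Moreover, invoking axioms \ref{z2} and \ref{z3} to ``rule out any spurious smaller value'' is misplaced: those axioms belong to the abstract cryptomorphism, where a $q$-matroid is reconstructed from a pair $(\mZ,f)$; here $M=(E,r)$ is given, and no smaller value can occur because your own first inequality already shows that every term $r(Z)+\hh((A+Z)/Z)$ is at least $r(A)$ --- the easy direction does double duty. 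A last small caveat: your base case ``$Z=\zero$ works when $A$ is independent'' is valid only when $M$ is loopless; in general the minimal cyclic flat is $\cl_r(\zero)$, which the uniform choice $Z=\cyc_r(\cl_r(A))$ handles automatically. In sum, the skeleton (two inequalities, attainment at $\cyc_r(\cl_r(A))$) is correct and is the standard route, but the proposal substitutes redundant or inapplicable machinery for the single identity of Lemma~\ref{lem_cyc_closure} that actually finishes the argument.
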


The independent spaces and the cyclic flats of a $q$-matroid are related by the following result.

\begin{lemma}\cite[Lemma 2.28]{alfarano2022cyclic}\label{lem:indep_charact}
    Let $M=(E,r)$ be a $q$-matroid. Then $I\in\mI(M)$ if and only if for every cyclic flat $Z\in\mZ(M)$, $\dim(I\cap Z)\leq r(Z)$. 
\end{lemma}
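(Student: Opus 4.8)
My plan is to prove both implications directly, using Lemma~\ref{lem:rank_function_cyclic_flats} as the main engine for the harder (sufficiency) direction. Throughout I will use only that $r$ satisfies \textbf{(R1)}--\textbf{(R3)} and that $r(I)=\dim(I)$ is the definition of independence. A preliminary fact I will record first is that independence is hereditary: if $I\in\mI(M)$ and $J\leq I$, then $J\in\mI(M)$. This follows from the submodular increment bound $r(A+x)\leq r(A)+1$ for any one-dimensional $x$ (apply \textbf{(R3)} to $A$ and $x$, using $r(x)\leq 1$ and $r(A\cap x)\geq 0$). Iterating along a chain $J=J_0<J_1<\cdots<J_t=I$ of one-dimensional extensions gives $\dim(I)=r(I)\leq r(J)+(\dim(I)-\dim(J))$, hence $r(J)\geq\dim(J)$, which together with \textbf{(R1)} forces $r(J)=\dim(J)$.

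For the necessity direction ($\Rightarrow$), assume $I\in\mI(M)$ and let $Z\in\mZ(M)$ be any cyclic flat. Since $I\cap Z\leq I$, the heredity fact gives $r(I\cap Z)=\dim(I\cap Z)$. Since also $I\cap Z\leq Z$, monotonicity \textbf{(R2)} yields $r(I\cap Z)\leq r(Z)$, and combining the two gives $\dim(I\cap Z)\leq r(Z)$, as required. (Note a naive application of submodularity to $I$ and $Z$ only delivers $r(I\cap Z)\leq r(Z)$, which is weaker, so passing through heredity to replace $r(I\cap Z)$ by $\dim(I\cap Z)$ is the essential step here.)

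For the sufficiency direction ($\Leftarrow$), assume $\dim(I\cap Z)\leq r(Z)$ for every $Z\in\mZ(M)$. By \textbf{(R1)} it suffices to show $r(I)\geq\dim(I)$. I will apply Lemma~\ref{lem:rank_function_cyclic_flats}, which expresses
\[
r(I)=\min\{\,r(Z)+\dim\big((I+Z)/Z\big) : Z\in\mZ(M)\,\}.
\]
The key identity is the second isomorphism theorem, $\dim((I+Z)/Z)=\dim(I)-\dim(I\cap Z)$, so each term in the minimum equals $r(Z)+\dim(I)-\dim(I\cap Z)$. The hypothesis $\dim(I\cap Z)\leq r(Z)$ makes this at least $\dim(I)$, uniformly in $Z$. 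Hence the minimum is at least $\dim(I)$, giving $r(I)\geq\dim(I)$ and therefore $r(I)=\dim(I)$, i.e.\ $I\in\mI(M)$.

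I expect no serious obstacle: both directions are short once the rank-from-cyclic-flats formula is invoked. The only points requiring care are the heredity lemma (which supplies the equality $\dim(I\cap Z)=r(I\cap Z)$ in the forward direction and cannot be bypassed by submodularity alone) and the quotient-dimension identity that converts the minimum in Lemma~\ref{lem:rank_function_cyclic_flats} into exactly the hypothesized inequality. Since $\mZ(M)$ is a finite lattice the minimum is attained, so no well-definedness issue arises.
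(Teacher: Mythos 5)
Your proof is correct, and both directions are sound: the heredity lemma via the unit-increment bound $r(A+x)\leq r(A)+1$ is exactly what is needed to upgrade $r(I\cap Z)$ to $\dim(I\cap Z)$ in the forward direction, and the identity $\dim((I+Z)/Z)=\dim(I)-\dim(I\cap Z)$ correctly converts the minimum in Lemma~\ref{lem:rank_function_cyclic_flats} into the hypothesised inequality for sufficiency. Note, however, that the paper itself offers no proof to compare against: the lemma is quoted verbatim from \cite[Lemma 2.28]{alfarano2022cyclic}, so your argument stands in for a proof the present paper omits.

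One caveat on the route you chose. You derive the lemma from Lemma~\ref{lem:rank_function_cyclic_flats}, which is \cite[Corollary 3.12]{alfarano2022cyclic}; in that source the lemma you are proving (Lemma 2.28 there) \emph{precedes} Corollary 3.12, so as a reconstruction of the original argument your proof inverts the logical order, and its validity rests on the fact that the cryptomorphism machinery behind Corollary 3.12 does not itself invoke Lemma 2.28 (it does not, so there is no circularity — but this deserves the explicit check you did not make). The source's own proof of sufficiency is more elementary: argue the contrapositive, namely that a dependent space $I$ contains a circuit $C$, and then $Z=\cyc(\cl(C))$ is a cyclic flat with $C\leq Z$ and $r(Z)\leq r(\cl(C))=r(C)=\dim(C)-1$, whence $\dim(I\cap Z)\geq\dim(C)>r(Z)$. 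That route uses only the basic properties of $\cl$ and $\cyc$ and works at the level of the axioms; your route is shorter and treats all cyclic flats uniformly, at the price of importing the full rank-from-cyclic-flats formula as a black box. Within the present paper, where both statements are cited facts, your derivation is perfectly acceptable.
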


We recall the restriction and the contraction operations for $q$-matroids; see \cite{byrne2022constructions,jurrius2018defining}.

\begin{definition}\label{def:restriction}
 Let $M=(E,r)$ be a $q$-matroid and $A\in\mL(E)$.
 For every space $T\leq A$, we define $r_{M|{A}}(T):=r(T)$. The $q$-matroid  $M|{A}:=(A,r_{M|{A}})$ is called the \textbf{restriction of $M$ to $A$}. Define a map
 $$r_{M/A} : \mathcal{L}(E/A) \to \mathbb{N}_0, \ T\mapsto r(\pi^{-1}(T))-r(A),$$
where $\pi: E \to E/A$ is the canonical projection.
 Then the $q$-matroid $M/A :=(E/A,r_{M/A})$ is called the \textbf{contraction of $M$ by $A$}. For an arbitrary interval $[A,B]\subseteq\mL(E)$, we denote by $M[A,B]$, the $q$-matroid minor with rank function $r_{M[A,B]}$ defined on the interval $[A,B]$ by $r_{M[A,B]}(X)=r(X)-r(A)$ for $X\in[A,B]$.
\end{definition}

We recall the notion of {\em dual matroid} and some basic properties, which we will use in section~\ref{sec:weak ordering}.

\begin{definition}
Let $M=(E,r)$ be a $q$-matroid. The \textbf{dual $q$-matroid} of $M$ is the $q$-matroid $(M,r^\ast)$, where
\begin{align*}
    r^\ast: \mathcal{L}(E)\to \N_0, \ A \mapsto \dim(A)-r(E)+r(A^\perp).
\end{align*} 
\end{definition}

\begin{lemma}\cite[Lemma 11]{byrne2022weighteddesigns}\label{lem:duality_properties}
    Let $M = (E, r)$ be a $q$-matroid and let $T\in\mL(E)$. Then,
    $$M^\ast/T\cong (M|{T^\perp})^\ast, \qquad (M/T)^\ast\cong M^\ast|{T^\perp}.$$
\end{lemma}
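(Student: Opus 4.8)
The plan is to prove the duality relations $M^\ast/T \cong (M|T^\perp)^\ast$ and $(M/T)^\ast \cong M^\ast|T^\perp$ directly from the definitions of the dual rank function, the restriction, and the contraction.

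Let me compute the rank functions on both sides.

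For $M^\ast/T$: The dual rank function is $r^*(A) = \dim(A) - r(E) + r(A^\perp)$.

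The contraction $M^*/T$ has rank function $r_{M^*/T}(X) = r^*(\pi^{-1}(X)) - r^*(T)$ for $X \in \mathcal{L}(E/T)$, where $\pi: E \to E/T$.

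For $(M|T^\perp)^*$: First restrict $M$ to $T^\perp$, then dualize. The restriction $M|T^\perp$ has rank function $r(V)$ for $V \leq T^\perp$. Its dual uses the orthogonal complement within $T^\perp$.

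This is the approach I'd take.

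\textbf{Proof.}

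We prove the first isomorphism $M^\ast/T\cong (M|{T^\perp})^\ast$; the second follows by applying the first to the dual $q$-matroid $M^\ast$ together with the involutive identity $(M^\ast)^\ast = M$.

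First I would set up the natural $\Fq$-isomorphism between the two ground spaces. The contraction $M^\ast/T$ has ground space $E/T$, while $(M|{T^\perp})^\ast$ has ground space $T^\perp$. The canonical pairing induced by the fixed non-degenerate bilinear form gives an isomorphism $\tau\colon E/T \to T^\perp$, under which the quotient $E/T$ is identified with $T^\perp$ and, for a subspace $X \in \mL(E/T)$ with preimage $\pi^{-1}(X) = A$ (so $T \leq A \leq E$), the corresponding subspace of $T^\perp$ is $A^\perp \leq T^\perp$. The plan is to show that $r_{M^\ast/T}$ and the dual rank function of $M|{T^\perp}$ agree under this identification.

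The main computation is to expand both rank functions explicitly and check they coincide. For $X \in \mL(E/T)$ with $\pi^{-1}(X) = A$, we have
\[
r_{M^\ast/T}(X) = r^\ast(A) - r^\ast(T) = \big(\dim A - r(E) + r(A^\perp)\big) - \big(\dim T - r(E) + r(T^\perp)\big).
\]
This simplifies to $\dim(A/T) + r(A^\perp) - r(T^\perp)$, using $\dim A - \dim T = \dim(A/T)$. On the other side, writing $N := M|{T^\perp}$ with rank function $r_N(V) = r(V)$ for $V \leq T^\perp$, the dual rank of the subspace $A^\perp \leq T^\perp$ is
\[
r_N^\ast(A^\perp) = \dim(A^\perp) - r_N(T^\perp) + r_N\big((A^\perp)^{\perp_{T^\perp}}\big),
\]
where $\perp_{T^\perp}$ denotes orthogonal complement computed inside the space $T^\perp$. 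The key point here is to identify $(A^\perp)^{\perp_{T^\perp}}$ inside $T^\perp$: since $T \leq A$, one checks that this equals $A \cap T^\perp$ projected appropriately, or more precisely that $r_N$ of it reduces to a value expressible through $r$ on $E$. Comparing dimensions, $\dim(A^\perp) = \dim E - \dim A$ and $r_N(T^\perp) = r(T^\perp)$, and I would verify that the remaining term matches $\dim(A/T) + r(A^\perp) - r(T^\perp)$ after substitution.

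The step I expect to be the main obstacle is correctly tracking the internal orthogonal complement $\perp_{T^\perp}$ and reconciling it with the global $\perp$ on $E$; the relation between $(A^\perp)^{\perp_{T^\perp}}$ (taken inside $T^\perp$) and the subspaces $A$, $T$, $T^\perp$ in $E$ requires care, since the internal complement is not simply the restriction of the global one. Once this identification is pinned down, the equality of the two rank functions is a direct dimension-count. The cleanest route may be to avoid internal complements altogether by citing the definition of the dual rank and expressing everything through $r$ evaluated at subspaces of $E$ of the form $A^\perp$ and $T^\perp$, thereby reducing the whole claim to the arithmetic identity displayed above.
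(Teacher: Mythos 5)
The paper offers no proof of this lemma --- it is quoted verbatim from \cite[Lemma 11]{byrne2022weighteddesigns} --- so your attempt must be judged on its own merits rather than against an in-paper argument. Your computation of the left-hand side is correct: for $X\in\mL(E/T)$ with $A=\pi^{-1}(X)$ one indeed gets $r_{M^\ast/T}(X)=\dim(A/T)+r(A^\perp)-r(T^\perp)$. The genuine gap is the identification on which you build the rest. The assignment $X\mapsto A^\perp$ is inclusion- and dimension-\emph{reversing}: $\dim A^\perp=\dim E-\dim A=\dim T^\perp-\dim X$, so it is a lattice anti-isomorphism $\mL(E/T)\to\mL(T^\perp)$ and is not realised by any linear isomorphism $\tau:E/T\to T^\perp$; hence it cannot be the map along which a $q$-matroid isomorphism (in the sense defined in this paper) is verified. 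Worse, the numerical equality you set out to check, $r_N^\ast(A^\perp)=r_{M^\ast/T}(X)$ with $N=M|T^\perp$, is false in general: take $M=M[G]$ with $G=(1\ \ 0)$ over $\F_q$, $E=\F_q^2$, $T=\boldsymbol{0}$, and $A=\langle e_2\rangle$ (the loop), with the standard dot product. Then $r_{M^\ast/T}(A)=r^\ast(A)=1-1+r(\langle e_1\rangle)=1$, whereas $r_N^\ast(A^\perp)=r^\ast(\langle e_1\rangle)=1-1+r(\langle e_2\rangle)=0$. Evaluating the dual rank at $A^\perp$ computes the rank of the \emph{anti-isomorphic} image of $X$, and no bookkeeping of internal complements repairs this; the mismatch already shows in the dimension term $\dim A^\perp\neq\dim(A/T)$.

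The repair is close to what you gesture at in your final sentence: the subspace of $T^\perp$ corresponding to $X$ must be the $V\leq T^\perp$ with $V^{\perp_{T^\perp}}=A^\perp$, i.e.\ the composition of the two anti-isomorphisms $X\mapsto A^\perp$ and $\perp_{T^\perp}$, which is order-preserving and induced by a linear map. Concretely, the form on $E$ induces a non-degenerate pairing $\beta:E/T\times T^\perp\to\F_q$, $\beta(x+T,y)=\langle x,y\rangle$ (well defined, and non-degenerate because $(T^\perp)^\perp=T$); fixing any non-degenerate bilinear form $b$ on $T^\perp$ to define the internal duality, let $\sigma:E/T\to T^\perp$ be the unique linear isomorphism with $b(\sigma(u),y)=\beta(u,y)$ for all $y\in T^\perp$. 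Then for every $X$ one has $\sigma(X)^{\perp_{T^\perp}}=\{y\in T^\perp:\langle a,y\rangle=0 \text{ for all } a\in A\}=A^\perp\cap T^\perp=A^\perp$, using $T\leq A$, and therefore
\[
r_N^\ast(\sigma(X))=\dim X-r(T^\perp)+r\bigl(\sigma(X)^{\perp_{T^\perp}}\bigr)=\dim(A/T)+r(A^\perp)-r(T^\perp)=r_{M^\ast/T}(X),
\]
so $\sigma$ is the required isomorphism. Your reduction of the second isomorphism to the first via $M/T\cong(M^\ast|T^\perp)^\ast$ and dualising is sound once the first isomorphism is actually established (duality is well defined up to lattice-equivalence, as the paper notes), so the only substantive defect is the mis-specified correspondence above.
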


As in the classical case, a well-known construction of a $q$-matroid arises from matrices; see \cite{jurrius2018defining}. Let $G$ be a $k \times n$ matrix with entries in the finite field $\F_{q^m}$ and for every $U\in\mL(\F_q^n)$, let $A^U$ be a matrix whose columns form a basis of $U$. Then the map 
\begin{equation*}
    r^G:\mL(\F_q^n) \to \mathbb{N}_0, \  U\mapsto \rk(G A^U),
\end{equation*}
is the rank function of a $q$-matroid, which we denote by $M[G]$ and we call the \textbf{$q$-matroid represented by $G$}. A $q$-matroid $M$ with ground space $\F_q^n$  and rank $k$ is \textbf{$\F_{q^m}$-representable} if $M=M[G]$ for some full rank $k\times n$ matrix $G$ with entries in $\F_{q^m}$. The $q$-matroid $M$ is called \textbf{representable} if it is \textbf{$\F_{q^m}$-representable} over the finite field $\F_{q^m}$.

\begin{example}
Let $\mU_{k,n}(q)$ be the \textbf{uniform $q$-matroid} of rank $k$ with ground space $\F_q^{n}$ that is, the $q$-matroid whose rank function is given by $r(A)=\min\{k,\dim(A)\}$ for all $A\in \mL(\F_q^n)$. Then $\mU_{0,n}(q)$ is represented by the $1\times n$ zero matrix, $\mU_{n,n}(q)$ is represented by the $n \times n$-identity matrix $\mathrm{Id}_n$, and, for $0 < k < n$, $\mU_{k,n}(q)$ is $\F_{q^m}$-representable if and only if $m \geq n$. This is because such a representation generates an MRD code, which exists if and only if $m\geq n$; see e.g.~\cite{delsarte1978bilinear}.
\end{example}

Finally, we recall a characterization of uniform $q$-matroids in terms of their cyclic flats.

\begin{lemma}\cite[Proposition 3.30]{alfarano2022cyclic}\label{lem:cyc_flats_uniform}
     Let $M=(E,r)$ be a $q$-matroid of rank $k$, with $0<k<n$. Let $\mZ$ be the lattice of cyclic flats of $M$. Then $M\cong\mU_{k,n}(q)$ if and only if $\mZ=\{\boldsymbol{0}_{\mZ}=\zero,\boldsymbol{1}_{\mZ}=E\}$ and $r(\boldsymbol{1}_{\mZ})=k$.   
\end{lemma}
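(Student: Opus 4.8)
The plan is to prove Lemma \ref{lem:cyc_flats_uniform}, characterizing uniform $q$-matroids of rank $k$ with $0<k<n$ as exactly those whose lattice of cyclic flats is the trivial two-element lattice $\{\boldsymbol{0}_\mZ=\zero,\boldsymbol{1}_\mZ=E\}$ with $r(\boldsymbol{1}_\mZ)=k$. I would prove the two implications separately.

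For the forward direction, assume $M\cong\mU_{k,n}(q)$, so $r(A)=\min\{k,\dim(A)\}$ for all $A$. First I would compute which spaces are cyclic and which are flats. A space $A$ is a flat when it is inclusion-maximal with its rank value; since for $\dim(A)<k$ the rank equals the dimension, adding any one-dimensional $x\nleq A$ strictly increases the rank, so every proper subspace of dimension less than $k$ is a flat, while any $A$ with $\dim(A)\geq k$ has $r(A)=k$ and is a flat only if $A=E$. Next I would identify the circuits: a circuit is an inclusion-minimal dependent space, and $A$ is dependent iff $r(A)<\dim(A)$, i.e. $\dim(A)>k$; the minimal such spaces have dimension $k+1$, so every $(k+1)$-dimensional space is a circuit. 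The cyclic spaces are the sums of circuits: the zero space is vacuously cyclic, and any sum of $(k+1)$-dimensional spaces has dimension at least $k+1>k$, hence cannot be a proper flat. Intersecting the two conditions, the only cyclic flats are $\zero$ (which is a flat since $k\geq 1$ means $\zero$ is maximal of rank $0$, using $0<k$) and $E$ (which is both a flat and, since $n\geq k+1$, a sum of circuits). This gives $\mZ=\{\zero,E\}$ and $r(E)=k$.

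For the converse, suppose $\mZ=\{\zero,E\}$ with $r(\zero)=0$ (forced by \ref{z1}) and $r(E)=k$. I would invoke Lemma \ref{lem:rank_function_cyclic_flats}, which recovers the rank of an arbitrary space from the cyclic flats: for all $A$,
\[
r(A)=\min\{r(Z)+\dim((A+Z)/Z):Z\in\mZ\}.
\]
With only the two cyclic flats $\zero$ and $E$, this minimum is over the two values $r(\zero)+\dim((A+\zero)/\zero)=\dim(A)$ and $r(E)+\dim((A+E)/E)=k+0=k$. Hence $r(A)=\min\{\dim(A),k\}$, which is precisely the rank function of $\mU_{k,n}(q)$, so $M\cong\mU_{k,n}(q)$.

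The main obstacle is the bookkeeping in the forward direction: carefully verifying that $\zero$ and $E$ genuinely are cyclic flats and that nothing else is, which requires correctly handling the boundary role of the hypothesis $0<k<n$. In particular one must check that $\zero$ is cyclic (the empty sum of circuits, hence the zero space) and a flat (its rank $0$ is not attained by any nonzero space precisely because $k\geq 1$, so there are no loops), and symmetrically that $E$ is a flat (trivially) and cyclic (it is a sum of $(k+1)$-dimensional circuits, which exist and span $E$ because $k+1\leq n$). The converse is essentially immediate once Lemma \ref{lem:rank_function_cyclic_flats} is applied, so the real content lies in the explicit identification of circuits and flats of the uniform $q$-matroid.
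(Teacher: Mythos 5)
Your proof is correct: the forward direction's identification of the flats (all subspaces of dimension less than $k$, plus $E$) and circuits (all $(k+1)$-dimensional subspaces) of $\mU_{k,n}(q)$ is accurate, the boundary hypotheses $0<k$ (no loops, so $\zero$ is a flat) and $k<n$ (circuits exist and span $E$, so $E$ is cyclic) are used exactly where needed, and the converse via Lemma~\ref{lem:rank_function_cyclic_flats} is immediate and valid. Note that the paper itself gives no proof of this lemma, citing it as \cite[Proposition 3.30]{alfarano2022cyclic}; your argument is the natural one and matches the strategy of that source, so there is nothing further to reconcile.
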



\section{The free product of \qq-matroids}\label{sec:cryptomorphisms}
In this section, we introduce the free product of a pair of $q$-matroids. We provide cryptomorphic definitions of this product in terms of its independent spaces, its rank function and its lattice of cyclic flats. We note that while the independent spaces and the cyclic flats are similar to the matroid case, the rank function is somewhat different.

\subsection{Independent spaces of the free product}

\begin{notation}\label{not:section_3}
For any direct sum $E=E_1\oplus E_2$ of $\F_q$-vector spaces $E_1$ and $E_2$ we denote by
$\pi_i:E\longrightarrow E_i$  the corresponding projections and by $\iota_i:E_i\longrightarrow E$ the canonical embeddings. For $A\leq E_1$ and $B\leq E_2$ we may denote $\iota_1(A)\oplus\iota_2(B)$ by $A\oplus B$, when there is no risk of confusion. For instance, we may denote $A\leq E_1$ by $A\oplus \boldsymbol{0}\leq E$. For an arbitrary set $\mathcal{V}\subseteq \mL(E_2)$ and an arbitrary space $A\in\mL(E_1)$, we denote by $A\oplus\mathcal{V}$ the set $\{A \oplus V : V\in\mathcal{V}\}$. The notation $\mathcal{V}\oplus A$ is similarly defined. Analogously, we will denote by $\iota_1(\mathcal{V})$ (resp. $\iota_2(\mathcal{V})$) the set $\{\iota_1(V) : V\in\mathcal{V}\}$ (resp. $\{\iota_2(V) : V\in\mathcal{V}\}$).

For a $q$-matroid $M=(E,r)$ let $\lambda$ be the \textbf{rank-lack} function and $\nu$ be the \textbf{nullity} function, defined respectively by 
\begin{align}
    \lambda(A) &=r(E)-r(A),\\
    \nu(B) &= \hh(B)-r(B), 
\end{align}
for every $A,B\in\mL(E)$.

Let $M_1=(E_1,\mI_1)$ and $M_2=(E_2,\mI_2)$ be $q$-matroids. Let $r_1,\lambda_1,\nu_1$ and $r_2,\lambda_2,\nu_2$ be the rank,  rank-lack and nullity functions of $M_1$ and $M_2$, respectively.
\end{notation}

Define the set 
\begin{equation}\label{eq:independent}
    \mathcal{I}:=\{I\leq E_1\oplus E_2:\pi_1(I\cap\iota_1(E_1))\in\mathcal{I}_1,\lambda_1(\pi_1(I\cap\iota_1(E_1)))\geq\nu_2(\pi_2(I))\}.
\end{equation}

We now show that the set $\mI$ forms the collection of the independent spaces of a $q$-matroid. We will use the following results from linear algebra.

\begin{lemma}\label{lem:rank_nullity}
Let $A\in\mathcal{L}(E_1\oplus E_2)$. We have that $\textup{dim}(A)=\textup{dim}(A\cap\iota_1(E_1))+\textup{dim}(\pi_2(A))$.
\end{lemma}
\begin{proof}
For every $A\in\mL(E_1\oplus E_2)$, consider the linear map $\phi:A\rightarrow\pi_2(A)$, defined by $\phi(x)=\pi_2(x)$, for every $x\in A$. It is straightforward to check that $\textup{ker}(\phi)=A\cap \iota_1(E_1)$. By the rank-nullity theorem, we thus have that $\textup{dim}(A)=\textup{dim}(A\cap \iota_1(E_1))+\textup{dim}(\pi_2(A))$.
\end{proof}

\begin{lemma}\label{lem:lin_alg_1}
   Let $A,B\leq E_1\oplus E_2$ be such that $B\leq A$. Then the following hold.
   \begin{enumerate}
       \item $\pi_i(B\cap\iota_i(E_i))\leq\pi_i(A\cap\iota_i(E_i))$, for $i=1,2$.
       \item $\pi_i(B)\leq\pi_i(A)$, for $i=1,2$.
   \end{enumerate}
\end{lemma}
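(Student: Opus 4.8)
The plan is to prove both containments directly from the definitions of the projection maps $\pi_i$ and the embeddings $\iota_i$, using only the hypothesis $B\leq A$ and the monotonicity of images under linear maps. Both statements are elementary facts about how projections and intersections behave with respect to inclusion, so the proof will be short.

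For part (2), I would argue as follows. Since $\pi_i$ is a linear map, it is monotone with respect to inclusion: if $B\leq A$ then $\pi_i(B)=\{\pi_i(x):x\in B\}\subseteq\{\pi_i(x):x\in A\}=\pi_i(A)$. As images of subspaces under a linear map are subspaces, this set inclusion is a subspace inclusion, giving $\pi_i(B)\leq\pi_i(A)$ for $i=1,2$. This is the completely routine half.

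For part (1), the key observation is that intersection is also monotone: from $B\leq A$ we get $B\cap\iota_i(E_i)\leq A\cap\iota_i(E_i)$, since any vector lying in both $B$ and $\iota_i(E_i)$ lies in both $A$ and $\iota_i(E_i)$. Applying the monotonicity of $\pi_i$ established in part (2) to the inclusion $B\cap\iota_i(E_i)\leq A\cap\iota_i(E_i)$ then yields $\pi_i(B\cap\iota_i(E_i))\leq\pi_i(A\cap\iota_i(E_i))$, as required. Thus part (1) follows by combining monotonicity of intersection with part (2).

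There is no genuine obstacle here; the statement is a pair of purely linear-algebraic monotonicity facts. The only point requiring the slightest care is keeping the two operations straight: one must first intersect with the fixed subspace $\iota_i(E_i)$ (which preserves inclusion) and then apply the projection (which again preserves inclusion). Since both operations respect $\leq$, their composition does too, and the whole lemma reduces to chaining these two monotonicity properties.
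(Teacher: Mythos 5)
Your proof is correct, and it is exactly the routine monotonicity argument the paper intends: the lemma is stated there without proof, being regarded as an elementary linear-algebra fact. Nothing further is needed.
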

    
\begin{theorem}\label{thm:free_prod_indep_axiom}
The set $\mI$ defined in~\eqref{eq:independent} is the collection of independent spaces of a $q$-matroid with ground space $E_1\oplus E_2$.
\end{theorem}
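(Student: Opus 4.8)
The plan is to verify the four independence axioms \ref{i1}--\ref{i4} of Definition \ref{def:indep_ax} for $\mI$. The first step is to rewrite the two defining conditions in a more usable form. Write $I_1:=\pi_1(I\cap\iota_1(E_1))$. Since $\pi_1$ restricts to an isomorphism on $\iota_1(E_1)$, we have $\dim I_1=\dim(I\cap\iota_1(E_1))$, so Lemma~\ref{lem:rank_nullity} gives $\dim I=\dim I_1+\dim\pi_2(I)$. Hence, for any $I$ with $I_1\in\mI_1$ (so that $r_1(I_1)=\dim I_1$), the inequality $\lambda_1(I_1)\ge\nu_2(\pi_2(I))$ is equivalent to $\dim I_1+\nu_2(\pi_2(I))\le r_1(E_1)$, and also to $\dim I\le r_1(E_1)+r_2(\pi_2(I))$. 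I will use these equivalent forms freely. I also record two monotonicity facts: $\lambda_1$ is order-reversing on $\mI_1$ by (R2) for $M_1$, and $\nu_2$ is order-preserving, since for a one-step inclusion submodularity of $r_2$ forces $r_2$ to grow by at most the dimension increment.

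Axioms \ref{i1} and \ref{i2} are then quick. For \ref{i1}, $\zero\in\mI$ because its $E_1$-part is $\zero\in\mI_1$ and $\nu_2(\zero)=0\le\lambda_1(\zero)$. For \ref{i2}, if $J\in\mI$ and $I\le J$, then Lemma~\ref{lem:lin_alg_1}(1) gives $I_1\le J_1$, so $I_1\in\mI_1$ by \ref{i2} for $M_1$; and combining $\lambda_1(I_1)\ge\lambda_1(J_1)$ with $\nu_2(\pi_2(I))\le\nu_2(\pi_2(J))$ (Lemma~\ref{lem:lin_alg_1}(2) together with the two monotonicities) yields $\lambda_1(I_1)\ge\lambda_1(J_1)\ge\nu_2(\pi_2(J))\ge\nu_2(\pi_2(I))$.

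For \ref{i3}, let $I,J\in\mI$ with $\dim I<\dim J$; the idea is to augment $I$ either in the $E_2$-direction or the $E_1$-direction. I split into three cases. If $\pi_2(J)\not\le\cl_{r_2}(\pi_2(I))$, there is a point $v\le\pi_2(J)$, $v\not\le\pi_2(I)$, with $r_2(\pi_2(I)+v)=r_2(\pi_2(I))+1$; lifting $v$ to a point $x\le J$ with $\pi_2(x)=v$ keeps $(I+x)\cap\iota_1(E_1)=I\cap\iota_1(E_1)$ while raising $r_2(\pi_2(\cdot))$ by one, so the criterion $\dim(\cdot)\le r_1(E_1)+r_2(\pi_2(\cdot))$ is preserved without any slack. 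In the remaining two cases one has $\pi_2(J)\le\cl_{r_2}(\pi_2(I))$, hence $r_2(\pi_2(J))\le r_2(\pi_2(I))$, and together with $\dim I<\dim J\le r_1(E_1)+r_2(\pi_2(J))$ this yields the strict slack $\dim I<r_1(E_1)+r_2(\pi_2(I))$. If moreover $\pi_2(J)\not\le\pi_2(I)$, I augment by a point $v\le\pi_2(J)$, $v\not\le\pi_2(I)$ (the slack absorbs the possible rise in $\nu_2$); if instead $\pi_2(J)\le\pi_2(I)$, then $\dim I_1<\dim J_1$ forces the use of \ref{i3} for $M_1$ inside $J_1$, and the resulting point is pulled back through $\iota_1$. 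In each case both defining conditions are checked directly using the reformulation.

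The main obstacle is \ref{i4}. Let $I\in\max(A,\mI)$ with $\dim I=:e$, and let $x$ be a point. First I would bound the maximal independent dimension $d$ in $A+x$: any $W\in\mI$ with $W\le A+x$ satisfies $\dim W\le\dim(W\cap A)+1\le e+1$, using \ref{i2} and maximality of $I$, so $d\in\{e,e+1\}$. If $d=e$, then $J:=I$ already lies in $\max(A+x,\mI)$ and $J\le I+x$. If $d=e+1$, then necessarily $x\not\le A$, so $\dim(I+x)=e+1$, and the only $(e+1)$-dimensional subspace of $I+x$ is $I+x$ itself; thus the axiom reduces to the single claim that $I+x\in\mI$. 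This is the crux. To prove it, I would take a witness $W\in\mI$ with $W\le A+x$ and $\dim W=e+1$; then $W\cap A\in\max(A,\mI)$ and $W=(W\cap A)+w$ with $A+\langle w\rangle=A+x$. The delicate step is to transfer independence of this particular extension $W$ to the extension $I+x$ of the given maximal space $I$: since $I$ and $W\cap A$ are both maximal in $A$, their $E_1$-parts share the same $M_1$-rank and their $E_2$-projections the same $M_2$-rank, and I would use this, together with \ref{i4} for $M_1$ and $M_2$ and submodularity, to verify that $(I+x)\cap\iota_1(E_1)\in\mI_1$ and $\dim((I+x)\cap\iota_1(E_1))+\nu_2(\pi_2(I+x))\le r_1(E_1)$, treating separately the cases $\pi_2(x)\le\pi_2(I)$ and $\pi_2(x)\not\le\pi_2(I)$. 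I expect the bookkeeping of how adding the single point $x$ affects $(I+x)\cap\iota_1(E_1)$ and $\pi_2(I+x)$ simultaneously to be the most technical part of the whole argument.
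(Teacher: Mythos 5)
Your handling of \ref{i1}, \ref{i2} and \ref{i3} is correct: the reformulation of the second defining condition as $\dim I\le r_1(E_1)+r_2(\pi_2(I))$ (valid whenever the first condition holds) is accurate, and your three-case augmentation argument for \ref{i3}, split according to whether $\pi_2(J)\le\cl_{r_2}(\pi_2(I))$ and whether $\pi_2(J)\le\pi_2(I)$, goes through; it is a genuinely different (and arguably tidier) decomposition than the paper's, which instead splits on whether the inequality $\lambda_1\geq\nu_2$ is strict. Your reduction of \ref{i4} is also sound: the maximal independent dimension $d$ in $A+x$ is $e$ or $e+1$, the case $d=e$ is immediate, and when $d=e+1$ the axiom is equivalent to the single claim that $I+x\in\mI$.

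However, that single claim is essentially all of \ref{i4}, and you do not prove it; you only indicate that you ``would'' verify it using \ref{i4} for $M_1$ and $M_2$, submodularity, and bookkeeping. Worse, the one concrete fact on which your transfer strategy rests is false: two elements of $\max(A,\mI)$ need not have $E_1$-parts of the same $M_1$-rank. Take $M_1=M_2=\mU_{1,2}(q)$, write $e_1,e_2$ for a basis of $\iota_1(E_1)$ and $e_3,e_4$ for a basis of $\iota_2(E_2)$, and let $A=E_1\oplus E_2$. Both $I=\langle e_1,e_3\rangle$ and $I'=\langle e_1+e_3,e_2+e_4\rangle$ lie in $\max(A,\mI)$, since each is independent of dimension $2=r_1(E_1)+r_2(E_2)$; but $\pi_1(I\cap\iota_1(E_1))=\langle e_1\rangle$ has $M_1$-rank $1$, while $I'\cap\iota_1(E_1)=\zero$ has $M_1$-rank $0$ (the slack moves: for $I$ one has $\lambda_1=\nu_2=0$, while for $I'$ one has $\lambda_1=\nu_2=1$). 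So the witness $W\cap A$ and the given space $I$ may split their dimensions between $\iota_1(E_1)$ and $\pi_2$ in incompatible ways, and no rank-matching transfer from $W$ to $I+x$ can work as described. Handling exactly this interaction is where the paper's proof spends its effort: it runs a direct case analysis on whether $\lambda_1(\pi_1(\beta\cap\iota_1(E_1)))>\nu_2(\pi_2(\beta))$ or equality holds, and on whether $x\le\iota_1(E_1)$, concluding in each case that $\beta$ or $\beta+x$ is maximal in $A+x$. As it stands, your proposal establishes \ref{i1}--\ref{i3} but leaves the heart of \ref{i4} open.
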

\begin{proof}
Let $\mathcal{L}:=\mathcal{L}(E_1\oplus E_2)$. We will prove that $\mI$ satisfies the independence axioms from Definition~\ref{def:indep_ax}.
Clearly, $\boldsymbol{0}\in\mathcal{I}$, hence \ref{i1} is satisfied. Further, by Lemma~\ref{lem:lin_alg_1}, \ref{i2} is also satisfied. 

In order to show \ref{i3}, consider $I, J\in\mI$, with $\dim(I) < \dim(J)$. Then we have that 
\begin{align*}
\pi_1(I\cap\iota_1(E_1))\in\mI_1, \lambda_1(\pi_1(I\cap\iota_1(E_1)))&\geq \nu_2(\pi_2(I)),\\
    \pi_1(J\cap\iota_1(E_1))&\in\mI_1,\\
    \lambda_1(\pi_1(J\cap\iota_1(E_1)))&\geq \nu_2(\pi_2(J)).
\end{align*} 
We distinguish between three cases. 
\\
\textbf{Case 1:} Assume that $\lambda_1(\pi_1(I\cap\iota_1(E_1)))> \nu_2(\pi_2(I))$ and $\dim(\pi_1(I\cap\iota_1(E_1)))<\dim(\pi_1(J\cap\iota_1(E_1)))$. Then for every $x\leq \pi_1(J\cap \iota_1(E_1)), x \not \leq \pi_1(I\cap\iota_1(E_1))$, we have 
\begin{align*}
    \lambda_1(\pi_1(I\cap\iota_1(E_1))+x) &=r_1(E_1) -r_1(\pi_1(I\cap\iota_1(E_1))+x) \\
    &\geq r_1(E_1) -r_1(\pi_1(I\cap\iota_1(E_1)))-1 \\
    &=\lambda_1(\pi_1(I\cap\iota_1(E_1)))-1\geq \nu_2(\pi_2(I)).
\end{align*}
Choose $x\leq \pi_1(J\cap \iota_1(E_1)), x \not\leq\pi_1(I\cap\iota_1(E_1))$ such that $\pi_1(I\cap\iota_1(E_1))+x\in\mI_1$, which exists by the fact that $\mathcal{I}_1$ satisfies \ref{i3} by assumption. Finally, we let $x'=\iota_2(x)\leq J$ and conclude that $I+x'\in\mI$.
\\
\textbf{Case 2:}  Assume $\lambda_1(\pi_1(I\cap\iota_1(E_1)))> \nu_2(\pi_2(I))$ and $\dim(\pi_2(I))<\dim(\pi_2(J))$. Then for every $y\leq\pi_2(J)$, such that $y\not\leq\pi_2(I)$ we have that $\lambda_1(\pi_1(I\cap\iota_1(E_1)))\geq \nu_2(\pi_2(I))+1\geq\nu_2(\pi_2(I)+y)$. Hence, $I+\iota_2(y)\in\mI$. Since $y\leq\pi_2(J)$, there exists $y'\leq J$ such that $y=\pi_2(y')$. It follows that $I+y'\in\mathcal{I}$.
\\
\textbf{Case 3:} Assume $\lambda_1(\pi_1(I\cap\iota_1(E_1)))= \nu_2(\pi_2(I))$. First of all notice that $$\lambda_1(\pi_1(J\cap\iota_1(E_1)))=r_1(E_1)-\dim(\pi_1(J\cap\iota_1(E_1)))\geq \nu_2(\pi_2(J)).$$
Hence, we have that 
\begin{align*}
    r_2(\pi_2(I))&= \dim(\pi_2(I))-\nu_2(\pi_2(I)) \\
    &=\dim(\pi_2(I))- \lambda_1(\pi_1(I\cap\iota_1(E_1))) \\
    &=\dim(\pi_2(I)) -r_1(E_1) + \dim(\pi_1(I\cap\iota_1(E_1)))\\
    &< \dim(\pi_2(J)) -r_1(E_1) + \dim(\pi_1(J\cap\iota_1(E_1)))\\
    &\leq \dim(\pi_2(J)) - \nu_2(\pi_2(J)) =r_2(\pi_2(J)).
\end{align*}
Hence we can find a one-dimensional space $y\leq \pi_2(J), y \not\leq \pi_2(I)$ such that $\nu_2(\pi_2(I)+y) = \nu_2(\pi_2(I))$ and $I+\iota_2(y)\in\mI$.

In order to show \ref{i4}, let $A\in\mathcal{L}$, let $\beta\in\max(A,\mI)$ and let $x\in\textup{At}(\mathcal{L})$. Assume that $x\nleq A$.
Since $\beta\in\mI$, we have that $\pi_1(\beta\cap\iota_1(E_1))\in\mI(M)$ and $\lambda_M(\pi_1(\beta\cap\iota_1(E_1)))\geq\nu_N(\pi_2(\beta))$.
We distinguish between two cases.
\\
\textbf{Case 1:} Let $\lambda_1(\pi_1(\beta\cap\iota_1(E_1)))>\nu_2(\pi_2(\beta))$. Then we must have that $\pi_1(\beta\cap\iota_1(E_1))\in\max(A\cap\iota_1(E_1),\mI)$ and $\pi_2(\beta)=\pi_2(A)$, since otherwise there would exist a one-dimensional space $e<\pi_2(A)$ not in $\pi_2(\beta)$ such that $\beta<\beta+e\in\mathcal{I}$, which contradicts the maximality of $\beta$ in $A$.
\begin{itemize}[leftmargin=*]
    \item Let $x\leq\iota_1(E_1)$.
    \begin{itemize}
        \item If $\pi_1((\beta+x)\cap\iota_1(E_1))\in\mathcal{I}(M)$ we have that $\beta+x\in\mathcal{I}$. Furthermore, we have that $\pi_2(A+x)=\pi_2(A)=\pi_2(\beta)=\pi_2(\beta+x)$. Therefore, $\beta+x\in\max(A,\mI)$. This contradicts the maximality of $\beta$ in $A$.
        \item Suppose now that $\pi_1((\beta+x)\cap\iota_1(E_1))\notin\mathcal{I}_1$. Let $\alpha\leq A+x$ such that $\beta\lessdot\alpha$ and $\beta+x\neq\alpha$. If $\beta\cap\iota_1(E_1)<\alpha\cap\iota_1(E_1)$, then $\alpha\notin\mathcal{I}$. If $\beta\cap\iota_1(E_1)=\alpha\cap\iota_1(E_1)$, then by Lemma~\ref{lem:rank_nullity}, we have that $\pi_2(\beta)<\pi_2(\alpha)$. Therefore, $\pi_2(\alpha)=\pi_2(A+x)=\pi_2(\beta+x)$, which yields a contradiction since $x\leq A\cap\iota_1(E_1)$. Therefore, $\beta\in\max(A+x,\mI)$.
    \end{itemize}
    \item Let $x\nleq\iota_1(E_1)$.
    \begin{itemize}
        \item Let $\pi_1(\beta\cap\iota_1(E_1))<\pi_1((\beta+x)\cap\iota_1(E_1))$. We then have that $\beta+x=\beta+y$ for some $y\in\iota_1(E_1)$, which means we are in a case that we have already dealt with.
        \item Let $\pi_1(\beta\cap\iota_1(E_1))=\pi_1((\beta+x)\cap\iota_1(E_1))$. By Lemma~\ref{lem:rank_nullity} we have that $\pi_2(\beta)<\pi_2(\beta+x)$. Therefore, we have that $\pi_2(\beta+x)=\pi_2(A+x)$. By Lemma~\ref{lem:rank_nullity}, we have that $(A+x)\cap\iota_1(E_1)=A\cap\iota_1(E_1)$, which means that $\beta\cap\iota_1(E_1)\in\max((A+x)\cap\iota_1(E_1), \mI)$. This means that $\beta+x\in\max(A+x,\mI)$.
        \end{itemize}
    \end{itemize}
\textbf{Case 2:} Let $\lambda_1(\pi_1(\beta\cap\iota_1(E_1)))=\nu_2(\pi_2(\beta))$.
    \begin{itemize}[leftmargin=*]
        \item Let $x\leq\iota_1(E_1)$. By Lemma~\ref{lem:rank_nullity} we have that $\pi_2(\beta)=\pi_2(\beta+x)$ and $\pi_2(A)=\pi_2(A+x)$.
        \begin{itemize}
        \item If $\pi_1((\beta+x)\cap\iota_1(E_1))\in\mathcal{I}_1$ we have that $\lambda_1((\beta+x)\cap\iota_1(E_1))<\nu_2(\pi_2(\beta))=\nu_1(\pi_2(\beta+x))$ and thus $\beta+x\notin\mathcal{I}$. Let $\alpha\leq A+x$ such that $\beta\lessdot\alpha$ and $\alpha\neq\beta+x$. If $\beta\cap\iota_1(E_1)<\alpha\cap\iota_1(E_1)$, then similarly, $\alpha\notin\mathcal{I}$. If $\pi_2(\beta)<\pi_2(\alpha)$, then since $x\leq\iota_1(E_1)$ we have that $\pi_2(\alpha)\leq\pi_2(A)$, which means that $\lambda_1(\alpha\cap\iota_1(E_1))=\lambda_1(\beta\cap\iota_1(E_1))<\nu_2(\pi_2(\alpha))$. Therefore, $\alpha\notin\mathcal{I}$, which means that $\beta\in\max(A+x,\mI)$.
        \item $\pi_1((\beta+x)\cap\iota_1(E_1))\notin\mathcal{I}_1$ we have that $\beta+x\notin\mathcal{I}$. By similar reasoning as in the last point, we have that $\beta\in\max(A+x,\mI)$.
        \end{itemize}
    \item Let $x\nleq\iota_1(E_1)$.
    \begin{itemize}
        \item Let $\pi_2(\beta)<\pi_2(\beta+x)$. If $\nu_2(\pi_2(\beta))=\nu_2(\pi_2(\beta+x))$, then $\beta+x\in\mathcal{I}$. If there exists $\alpha\leq\pi_2(A+x)$ such that $\pi_2(\beta+x)\lessdot\alpha$ and $\nu_2(\pi_2(\beta+x))=\nu_2(\alpha)$, then there must exist $e\in\textup{At}(\pi_2(A))$ such that $\pi_2(\beta)\lessdot\pi_2(\beta+e)\neq\pi_2(\beta+x)$ and $\nu_2(\pi_2(\beta))=\nu_2(\pi_2(\beta+e))$ by the $q$-matroid properties of $M_2$. This contradicts the maximality of $\beta$, so therefore no such atom $e$ can exist. Hence, we have that $\beta+x\in\max(A+x,\mI)$.
        \item Let $\pi_2(\beta)=\pi_2(\beta+x)$. By Lemma~\ref{lem:rank_nullity} we have that $\beta\cap\iota_1(E_1)<(\beta+x)\cap\iota_1(E_1)$. Therefore, $\beta+x=\beta+y$ for some $y\leq\iota_1(E_1)$, which puts us in a case that we already dealt with. 
        \end{itemize}
    \end{itemize}
    In all the above cases we get that $\beta$ or $\beta+x$ belong to $\max(A+x,\mI)$, and this shows \ref{i4}. Hence, this concludes the proof.
\end{proof}

\begin{definition}
    For $q$-matroids $M_1=(E_1,\mI_1)$ and $M_2=(E_2,\mI_2)$ we call the $q$-matroid with ground space $E_1\oplus E_2$ and collection of independent spaces $\mI$ as defined in \eqref{eq:independent} the \textbf{free product of $M_1$ and $M_2$} and we denote it by $M_1\square M_2$.
\end{definition}

We will continue to use Notation~\ref{not:section_3} throughout sections \ref{sec:rank_funct_fp} and \ref{sec:cyc_flats_fp}.

\subsection{The rank function of the free product}\label{sec:rank_funct_fp}

In this subsection, we derive a compact expression for the rank function of the free product. 
Given the collection $\mI$ as in \eqref{eq:independent}, define the rank function of $M_1\square M_2$ as $r_\mI:\mathcal{L}(E_1\oplus E_2)\rightarrow\mathbb{N}_0,$ such that for every $X\in\mathcal{L}(E_1\oplus E_2)$, 
    $$r_{\mI}(X)=\textup{max}\{\dim(X\cap I):I\in\mathcal{I}\}.$$

    \begin{theorem}\label{thm:free_prod_rank_function}
        The function $r_\mI$ satisfies $$r_\mI(X)=r_1(\pi_1(X\cap\iota_1(E_1)))+r_2(\pi_2(X))+\textup{min}\{\lambda_1(\pi_1(X\cap\iota_1(E_1))),\nu_2(\pi_2(X))\},$$
        for every $X\in \mL(E_1\oplus E_2)$.
    \end{theorem}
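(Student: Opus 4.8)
The plan is to establish the two inequalities $r_\mI(X)\le R$ and $r_\mI(X)\ge R$, where $R$ denotes the claimed right-hand side, abbreviating $X_1:=\pi_1(X\cap\iota_1(E_1))$ and $X_2:=\pi_2(X)$. Since every subspace of an independent space is independent by \ref{i2}, one has $r_\mI(X)=\max\{\dim(Y): Y\le X,\ Y\in\mI\}$, so it will suffice to bound $\dim(Y)$ for every independent $Y\le X$ and then to exhibit one such $Y$ of dimension $R$. The arithmetic is cleaner after rewriting $R$: substituting $r_1(E_1)=r_1(X_1)+\lambda_1(X_1)$ and $\dim(X_2)=r_2(X_2)+\nu_2(X_2)$ gives
\[
R=\min\{\,r_1(E_1)+r_2(X_2),\ r_1(X_1)+\dim(X_2)\,\},
\]
and I will prove each bound against this form.

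For the upper bound I would fix $Y\le X$ with $Y\in\mI$ and set $J:=\pi_1(Y\cap\iota_1(E_1))\in\mI_1$. Lemma~\ref{lem:rank_nullity} splits $\dim(Y)=\dim(J)+\dim(\pi_2(Y))$, while Lemma~\ref{lem:lin_alg_1} gives $J\le X_1$ and $\pi_2(Y)\le X_2$. Independence of $J$ yields $\dim(J)=r_1(J)\le r_1(X_1)$ and trivially $\dim(\pi_2(Y))\le\dim(X_2)$, producing the bound $\dim(Y)\le r_1(X_1)+\dim(X_2)$. For the other term I would instead use the defining inequality $\lambda_1(J)\ge\nu_2(\pi_2(Y))$ of $\mI$: it gives $\dim(J)=r_1(E_1)-\lambda_1(J)\le r_1(E_1)-\nu_2(\pi_2(Y))$, whereas $\dim(\pi_2(Y))=r_2(\pi_2(Y))+\nu_2(\pi_2(Y))\le r_2(X_2)+\nu_2(\pi_2(Y))$; adding the two lines cancels the nullity term and yields $\dim(Y)\le r_1(E_1)+r_2(X_2)$. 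Together these give $\dim(Y)\le R$.

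For the lower bound I would construct a single independent $Y\le X$ attaining $R$. First take a maximal independent subspace $J\le X_1$ of $X_1$ in $M_1$, so that $\dim(J)=r_1(X_1)$ and $\lambda_1(J)=\lambda_1(X_1)$; then extend a maximal independent subspace of $X_2$ in $M_2$ to a space $V$ with $V\le X_2$ and $\dim(V)=r_2(X_2)+\min\{\lambda_1(X_1),\nu_2(X_2)\}$, which fits inside $X_2$ because $\min\{\lambda_1(X_1),\nu_2(X_2)\}\le\nu_2(X_2)$ and which satisfies $r_2(V)=r_2(X_2)$, hence $\nu_2(V)=\min\{\lambda_1(X_1),\nu_2(X_2)\}\le\lambda_1(X_1)$. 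To place $V$ inside $X$ I would use that $\pi_2|_X$ is onto $X_2$ with kernel $X\cap\iota_1(E_1)$: choosing a complement $T$ of $X\cap\iota_1(E_1)$ in $X$ makes $\pi_2|_T\colon T\to X_2$ an isomorphism, and I set $T_V:=(\pi_2|_T)^{-1}(V)$ and $Y:=\iota_1(J)+T_V$. Injectivity of $\pi_2$ on $T$ together with $T\cap\iota_1(E_1)=\zero$ then forces $Y\cap\iota_1(E_1)=\iota_1(J)$ and $\pi_2(Y)=V$, so that $\pi_1(Y\cap\iota_1(E_1))=J\in\mI_1$ and $\lambda_1(J)=\lambda_1(X_1)\ge\nu_2(V)=\nu_2(\pi_2(Y))$, i.e. $Y\in\mI$; and Lemma~\ref{lem:rank_nullity} gives $\dim(Y)=\dim(J)+\dim(V)=R$.

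I expect the upper bound to be routine; the crux is the lower-bound construction, namely producing one subspace $Y\le X$ whose $E_1$-part projects onto a prescribed independent $J\le X_1$ while its $\pi_2$-image is a prescribed $V\le X_2$ of controlled nullity. The section $T$ of $\pi_2|_X$ is exactly what decouples these two requirements, and the delicate point to verify is that forming $\iota_1(J)+T_V$ does not enlarge the $E_1$-part beyond $\iota_1(J)$, which is precisely where injectivity of $\pi_2$ on $T$ enters.
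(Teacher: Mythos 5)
Your proof is correct, but it takes a genuinely different route from the paper's. The paper leans on Theorem~\ref{thm:free_prod_indep_axiom}: since $\mI$ is already known to define a $q$-matroid, all maximal independent subspaces of $X$ are equicardinal, so the authors pick a single basis $\beta$ of $X$ adapted to the filtration (one with $\beta\cap\iota_1(E_1)$ maximal independent in $X\cap\iota_1(E_1)$), split $\dim(\beta)$ via Lemma~\ref{lem:rank_nullity}, and finish with a case analysis on whether $\lambda_1(\beta\cap\iota_1(E_1))$ equals or strictly exceeds $\nu_2(\pi_2(\beta))$ (in the strict case maximality forces $\pi_2(\beta)=\pi_2(X)$). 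You instead prove the two inequalities $r_\mI(X)\le R$ and $r_\mI(X)\ge R$ directly against the rewritten form $R=\min\{r_1(E_1)+r_2(X_2),\,r_1(X_1)+\dim(X_2)\}$: the upper bound is a clean dimension count on an \emph{arbitrary} independent $Y\le X$ (one estimate per term of the min, with the nullity cancellation doing the work), and the lower bound is an explicit extremal construction $Y=\iota_1(J)+T_V$ built from a linear section $T$ of $\pi_2|_X$, whose key verification --- that $Y\cap\iota_1(E_1)$ does not exceed $\iota_1(J)$ --- you carry out correctly. What each approach buys: the paper's argument is shorter once Theorem~\ref{thm:free_prod_indep_axiom} is in hand, but it quietly uses equicardinality of bases and the existence of the adapted basis $\beta$ (an augmentation argument), and it leaves the identity $r_2(\pi_2(\beta))=r_2(\pi_2(X))$ to the maximality discussion; your argument never invokes the $q$-matroid structure of $\mI$ at all, working purely from the definition of $\mI$, axiom \ref{i2}, and Lemmas~\ref{lem:rank_nullity} and~\ref{lem:lin_alg_1}, so it is self-contained and would remain valid even before knowing that $r_\mI$ is a rank function. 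Both proofs share Lemma~\ref{lem:rank_nullity} as the essential dimension-splitting tool.
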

    \begin{proof}
        Let $X\in\mL(E_1\oplus E_2)$. Since $M_1\square M_2$ is a $q$-matroid, we have that all bases of $X$ have equal dimension. Therefore we choose $\beta\in\max(X,\mI)$ such that $\beta\cap\iota_1(E_1)\in\max(X\cap\iota_1(E_1),\mI)$. By Lemma~\ref{lem:rank_nullity} we have the following.
        \begin{align}\label{eq:rank_cryptomorph}
            r_\mI(X)=\dim(\beta)&=\dim(\beta\cap\iota_1(E_1))+\dim(\pi_2(\beta))\nonumber\\
            &=r_1(X\cap\iota_1(E_1))+r_2(\pi_2(X))+\nu_2(\pi_2(\beta)).
        \end{align}
        Note that $\lambda_1(X\cap\iota_1(E_1))=\lambda_1(\beta\cap\iota_1(E_1))$ and that $\lambda_1(\beta\cap\iota_1(E_1))\geq\nu_2(\pi_2(\beta))$ since $\beta\in\mathcal{I}$. If $\lambda_1(\beta\cap\iota_1(E_1))=\nu_2(\pi_2(\beta))$, then we can substitute $\lambda_1(X\cap\iota_1(E_1))$ into (\ref{eq:rank_cryptomorph}) to get the result. If $\lambda_1(\beta\cap\iota_1(E_1))>\nu_2(\pi_2(\beta))$, then we must have that $\pi_2(\beta)=\pi_2(X)$, since otherwise there would exist an atom $e\in\textup{At}(\pi_2(X))\backslash\textup{At}(\pi_2(\beta))$ such that $\beta<\beta+e\in\mathcal{I}$, which contradicts the maximality of $\beta$ in $X$. The result follows.
    \end{proof}

From now on, we write $r$ in place of $r_\mI$, unless it is needed.

The following statement follows from a straightforward computation and hence we omit a proof.
\begin{lemma}\label{lem:ext_cases_rank}
    Let $X\in\mL(E_1\oplus E_2)$. The following holds.
    \begin{enumerate}
        \item If $X=\overline{X}\oplus \boldsymbol{0}$, for some $\overline{X}\in\mL(E_1)$, then $r(X)=r_1(\overline{X})$.
        \item If $X=E_1\oplus \overline{X}$, for some $\overline{X}\in\mL(E_2)$, then $r(X)=r_1(E_1)+r_2(\overline{X})$.
    \end{enumerate}
\end{lemma}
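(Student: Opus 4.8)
The plan is to verify each of the two cases directly from the closed-form expression for the rank function established in Theorem~\ref{thm:free_prod_rank_function}, namely
\[
r(X)=r_1(\pi_1(X\cap\iota_1(E_1)))+r_2(\pi_2(X))+\min\{\lambda_1(\pi_1(X\cap\iota_1(E_1))),\nu_2(\pi_2(X))\}.
\]
The entire task reduces to computing the three ingredients $\pi_1(X\cap\iota_1(E_1))$, $\pi_2(X)$, $\lambda_1(\cdot)$ and $\nu_2(\cdot)$ for the two special shapes of $X$, and then reading off the minimum.

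For (1), where $X=\overline{X}\oplus\boldsymbol{0}$ with $\overline{X}\in\mL(E_1)$, I would first observe that $X\leq\iota_1(E_1)$, so $X\cap\iota_1(E_1)=X$ and hence $\pi_1(X\cap\iota_1(E_1))=\overline{X}$. Simultaneously $\pi_2(X)=\boldsymbol{0}$, so $r_2(\pi_2(X))=0$ and, by axiom \ref{z1} applied to $M_2$ (or simply $\nu_2(\boldsymbol{0})=\dim(\boldsymbol 0)-r_2(\boldsymbol 0)=0$), we get $\nu_2(\pi_2(X))=0$. Since $\lambda_1(\overline{X})=r_1(E_1)-r_1(\overline{X})\geq 0$, the minimum of $\lambda_1(\overline{X})$ and $0$ is $0$. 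Substituting gives $r(X)=r_1(\overline{X})+0+0=r_1(\overline{X})$, as claimed.

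For (2), where $X=E_1\oplus\overline{X}$ with $\overline{X}\in\mL(E_2)$, I would note that $X\cap\iota_1(E_1)=\iota_1(E_1)$, so $\pi_1(X\cap\iota_1(E_1))=E_1$, whence $\lambda_1(E_1)=r_1(E_1)-r_1(E_1)=0$ and $r_1(\pi_1(X\cap\iota_1(E_1)))=r_1(E_1)$. Also $\pi_2(X)=\overline{X}$, so $r_2(\pi_2(X))=r_2(\overline{X})$ and $\nu_2(\pi_2(X))=\nu_2(\overline{X})\geq 0$. The minimum of $0$ and $\nu_2(\overline{X})$ is $0$, so $r(X)=r_1(E_1)+r_2(\overline{X})+0$, exactly as stated.

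There is no real obstacle here: both cases are forced by the fact that one of the two quantities inside the minimum vanishes (the nullity of the zero space in case (1), and the rank-lack of the full space $E_1$ in case (2)), which collapses the minimum term to zero. The only points requiring a line of care are the elementary identifications $X\cap\iota_1(E_1)=X$ in case~(1) and $X\cap\iota_1(E_1)=\iota_1(E_1)$ in case~(2), both of which follow immediately from $\iota_1(E_1)\leq X$ or $X\leq\iota_1(E_1)$ respectively, together with the projection identities from Lemma~\ref{lem:lin_alg_1}. This is precisely why the authors describe the computation as straightforward and omit the proof.
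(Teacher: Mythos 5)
Your proof is correct and is precisely the ``straightforward computation'' from Theorem~\ref{thm:free_prod_rank_function} that the paper alludes to when omitting the proof: in each case one of the two terms inside the minimum vanishes ($\nu_2(\boldsymbol{0})=0$, resp.\ $\lambda_1(E_1)=0$), collapsing the formula to the stated value. The only cosmetic slip is the citation of axiom \ref{z1} for $\nu_2(\boldsymbol{0})=0$ --- that axiom concerns lattices of cyclic flats, not the nullity function --- but your parenthetical elementary justification via (R1) is the right one, so nothing is missing.
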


From Lemma \ref{lem:ext_cases_rank} we immediately get the next result. 

\begin{corollary}\label{cor:cyc_cl}
    Let $M_i=(E_i,r_i)$ be $q$-matroids for $i=1,2$. Let $E=E_1\oplus E_2$. Let $X,Y\in \mL(E)$. Then the following hold. 
    \begin{enumerate}
    \setlength\itemsep{0.5em}
        \item If $X=\overline{X}\oplus \boldsymbol{0}$ and $Y=\overline{Y}\oplus \boldsymbol{0}$ for some $\overline{X},\overline{Y}\in\mL(E_1)$, then  $\cl_r(X+Y)= \cl_{r_1}(\overline{X}+ \overline{Y}) \oplus \zero$ and $\cyc_r(X\cap Y)=  \cyc_{r_2}(\overline{X}\cap \overline{Y})\oplus \zero$.
        \item If $X=E_1\oplus \overline{X}$ and $Y=E_1\oplus \overline{Y}$, for some $\overline{X}, \overline{Y}\in\mL(E_2)$, then $\cl_r(X+Y)= E_1\oplus \cl_{r_2}(\overline{X}+ \overline{Y})$ and $\cyc_r(X\cap Y)= E_1\oplus \cyc_{r_2}(\overline{X}\cap \overline{Y})$.
        \item  If $X=\overline{X}\oplus \boldsymbol{0}$ and $Y=E_1\oplus \overline{Y}$ for some $\overline{X}\in\mL(E_1)$ and $\overline{Y}\in\mL(E_2)$, then $\cl_r(X+Y) = E_1\oplus \cl_{r_2}(\overline{Y})$ and $\cyc_r(X\cap Y) = \cyc_{r_1}(\overline{X})\oplus \zero$.
    \end{enumerate}
     
\end{corollary}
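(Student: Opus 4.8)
The plan is to reduce the three cases to four computations on \emph{aligned} subspaces and to evaluate these using Lemma~\ref{lem:ext_cases_rank} together with the rank formula of Theorem~\ref{thm:free_prod_rank_function}. First I would simplify the arguments by elementary linear algebra: in~(1), $X+Y=(\overline{X}+\overline{Y})\oplus\zero$ and $X\cap Y=(\overline{X}\cap\overline{Y})\oplus\zero$; in~(2), $X+Y=E_1\oplus(\overline{X}+\overline{Y})$ and $X\cap Y=E_1\oplus(\overline{X}\cap\overline{Y})$; and in~(3), since $\overline{X}\oplus\zero\leq E_1\oplus\overline{Y}$ we have $X\leq Y$, whence $X+Y=E_1\oplus\overline{Y}$ and $X\cap Y=\overline{X}\oplus\zero$. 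Thus it suffices to prove, for all $\overline{W}\in\mL(E_1)$ and $\overline{V}\in\mL(E_2)$, the four identities
\[
\cl_r(\overline{W}\oplus\zero)=\cl_{r_1}(\overline{W})\oplus\zero,\qquad \cyc_r(\overline{W}\oplus\zero)=\cyc_{r_1}(\overline{W})\oplus\zero,
\]
\[
\cl_r(E_1\oplus\overline{V})=E_1\oplus\cl_{r_2}(\overline{V}),\qquad \cyc_r(E_1\oplus\overline{V})=E_1\oplus\cyc_{r_2}(\overline{V}).
\]
The guiding principle is that Lemma~\ref{lem:ext_cases_rank} identifies the interval $[\zero,E_1\oplus\zero]$ with the restriction $(M_1\square M_2)|_{\iota_1(E_1)}\cong M_1$ and the interval $[E_1\oplus\zero,E]$ with the contraction $(M_1\square M_2)/\iota_1(E_1)\cong M_2$.

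Two of the four identities are then essentially immediate. For $\cyc_r(\overline{W}\oplus\zero)$, note that by Lemma~\ref{lem:ext_cases_rank}(1) the rank is inherited on $[\zero,E_1\oplus\zero]$, so the circuits of $M_1\square M_2$ contained in $\iota_1(E_1)$ are exactly the circuits of the restriction, that is, the images under $\iota_1$ of the circuits of $M_1$; summing those contained in $\overline{W}\oplus\zero$ gives $\cyc_{r_1}(\overline{W})\oplus\zero$. For $\cl_r(E_1\oplus\overline{V})$, I would test a one-dimensional $x$ against Theorem~\ref{thm:free_prod_rank_function}: since $E_1\oplus\overline{V}$ contains $\iota_1(E_1)$, the relevant first projection equals $E_1$ and $\lambda_1=0$, so the $\min$-term vanishes and $r((E_1\oplus\overline{V})+x)=r(E_1\oplus\overline{V})$ is equivalent to $\pi_2(x)\leq\cl_{r_2}(\overline{V})$; summing such $x$ yields $E_1\oplus\cl_{r_2}(\overline{V})$.

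The remaining two identities, $\cl_r(\overline{W}\oplus\zero)$ and $\cyc_r(E_1\oplus\overline{V})$, are where I expect the main difficulty. Here I would again test a one-dimensional space $x$ (respectively inspect the circuits), splitting into $x\leq\iota_1(E_1)$ and $x\not\leq\iota_1(E_1)$ and reading off $r$ from Theorem~\ref{thm:free_prod_rank_function}. When $x$ meets both summands nontrivially, whether the rank stays constant is decided by the interaction term $\min\{\lambda_1(\pi_1(\,\cdot\,\cap\iota_1(E_1))),\nu_2(\pi_2(\,\cdot\,))\}$, and the crux is to show that it forbids leakage: that the closure of $\overline{W}\oplus\zero$ cannot acquire a component in $E_2$, and that every circuit inside $E_1\oplus\overline{V}$ projects into $\overline{V}$ modulo $E_1$. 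I would carry this out by a case analysis on whether $\lambda_1(\overline{W})=0$ (respectively on the nullity of $\overline{V}$ in $M_2$), using boundedness and submodularity of $r_1$ and $r_2$ to pin down the $\min$-term; the delicate situations are precisely the degenerate ones, where $\overline{W}$ is already spanning in $M_1$ or $\overline{V}$ meets the loop space of $M_2$, and these I would treat separately and directly from the rank formula.
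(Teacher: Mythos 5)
Your reduction to four identities is correct, and your proofs of the two you call immediate are sound: $\cyc_r(\overline{W}\oplus\zero)=\cyc_{r_1}(\overline{W})\oplus\zero$ holds because every circuit of $M_1\square M_2$ inside $\iota_1(E_1)$ is a circuit of the restriction (which is $M_1$ by Lemma~\ref{lem:ext_cases_rank}(1)), and $\cl_r(E_1\oplus\overline{V})=E_1\oplus\cl_{r_2}(\overline{V})$ holds because any space containing $\iota_1(E_1)$ has vanishing $\lambda_1$-term in Theorem~\ref{thm:free_prod_rank_function}. These two identities are true without exception; they settle all of case (3) and half of each of cases (1) and (2). (You also tacitly corrected a typo in the statement: in case (1) the operator should be $\cyc_{r_1}$, not $\cyc_{r_2}$.) Note that the paper itself gives no proof at all — the corollary is declared immediate from Lemma~\ref{lem:ext_cases_rank} — so on these points your write-up is more careful than the source.

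The gap is in the remaining two identities, which you defer to an uncompleted case analysis whose stated goal is to show that the min-term ``forbids leakage.'' That step cannot be carried out: in precisely the degenerate cases you flag, leakage occurs and the identities — hence the corollary as literally stated — are false. Concretely, suppose $\lambda_1(\overline{W})=0$ and $M_2$ has a loop $\ell\leq E_2$. Put $A=\overline{W}\oplus\zero$ and take any one-dimensional $x$ with $x\leq E_1\oplus\ell$ and $\pi_2(x)=\ell$; then $(A+x)\cap\iota_1(E_1)=A$ and $\pi_2(A+x)=\ell$, so Theorem~\ref{thm:free_prod_rank_function} gives $r(A+x)=r_1(\overline{W})+r_2(\ell)+\min\{0,1\}=r_1(\overline{W})=r(A)$, whence $\cl_r(A)\supseteq E_1\oplus\ell\not\leq\iota_1(E_1)$, contradicting $\cl_r(A)=\cl_{r_1}(\overline{W})\oplus\zero$. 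Dually, take $M_1=\mU_{1,1}(q)$, $M_2=\mU_{1,2}(q)$ and $\overline{V}=\zero$: then $E_1\oplus\zero$ is independent in $M_1\square M_2$ (it has dimension $1$ and rank $r_1(E_1)=1$), so $\cyc_r(E_1\oplus\zero)=\zero$, whereas case (2) claims the value $E_1\oplus\cyc_{r_2}(\zero)=E_1\oplus\zero$. The paper implicitly acknowledges these exceptions: in the proof of Theorem~\ref{thm:cyc_flats_axioms}, the clause ``if $\bar Z_1\vee\bar Z_2=E_1$, then $Z_1\vee Z_2=E_1\oplus\iota_2(\boldsymbol{0}_{\mZ_2})$'' records exactly the leakage your plan would have to rule out. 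What your method does prove, verbatim, is the closure identity under the hypothesis that $\lambda_1(\overline{W})>0$ or $M_2$ is loopless, and (dually) the cyclic identity when $\overline{V}$ is dependent in $M_2$ or $M_1$ has no coloops; a correct treatment must either add such non-degeneracy hypotheses to the corollary or handle the exceptional cases separately, as the paper later does in practice.
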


\subsection{The cyclic flats of the free product}\label{sec:cyc_flats_fp}
In this subsection, we describe the lattice structure of the cyclic flats of the free product of two $q$-matroids, in terms of the cyclic flats of the two factors. In order to do this, we recall some basic properties of the cyclic flats of a $q$-matroid. For more details we refer the interested reader to \cite{alfarano2022cyclic}.

We continue to use Notation~\ref{not:section_3}. Furthermore, we let $\mathcal{Z}_1$ and $\mathcal{Z}_2$ be the cyclic flats of $M_1$ and~$M_2$ respectively.
Let \begin{gather}
\mZ:=\iota_1(\mZ_1\setminus \{E_1\}) \cup \{E_1\oplus Z : Z\in\mZ_2\setminus \{\zero\}\}\subseteq\mL(E_1\oplus E_2),\label{eq:cyc_flat}\\
\mZ':=\mZ\cup \iota_1(E_1)\subseteq\mL(E_1\oplus E_2).
\end{gather}
Let $\vee$ and $\wedge$ be two operators such that for every $A,B\in\mZ$, we have $A\vee B=\cl(A+B)$ and $A\wedge B=\cyc(A\cap B)$.
We are going to show that the set $\mZ'$ is the set of cyclic flats of the free product of a coloopless $q$-matroid and a loopless $q$-matroid. If $M_1$ has coloops or $M_2$ has loops, then $\mZ$ is the collection of cyclic flats of the free product of $M_1\square M_2$. The proof consists in two steps. In Theorem \ref{thm:cyc_flats_axioms} we prove that $\mZ'$ and $\mZ$ satisfy the cyclic flat axioms from Definition \ref{def:axioms_cyclic_flats}. In Theorem \ref{thm:union_of_cycflats}, we show that either $\mZ'$ or $\mZ$ is the set of cyclic flats of $M_1\square M_2$, according to the presence of loops and coloops.

We first recall the following preliminary result.

\begin{lemma}{\cite[Lemma 2.17]{alfarano2022cyclic}}\label{lem_cyc_closure}
     Let $(E,r)$ be any $q$-matroid. Then, 
     for every $X\in\mL(E)$, we have that
    $r(X)-r(\cyc(X)) = \dim(X)-\dim(\cyc(X))$.
\end{lemma}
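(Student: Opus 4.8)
The statement to prove is Lemma 2.17 from \cite{alfarano2022cyclic}, which asserts that for any $q$-matroid $(E,r)$ and any $X \in \mL(E)$, we have $r(X) - r(\cyc(X)) = \dim(X) - \dim(\cyc(X))$.

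\medskip

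The plan is to exploit the definition of the cyclic operator together with the semimodular (submodular) rank axiom \textbf{(R3)}. Recall that $\cyc(X) \leq X$ is the sum of all cyclic subspaces contained in $X$. The quantity $\dim(X) - \dim(\cyc(X))$ measures the codimension of $\cyc(X)$ in $X$, and we want to show the rank drops by exactly the same amount, i.e. that the ``outer layer'' $X/\cyc(X)$ is independent in the contracted sense. Equivalently, writing $c := \cyc(X)$, the claim is that $r(X) - r(c) = \dim(X) - \dim(c)$, which says that every one-dimensional step from $c$ up to $X$ increases the rank by exactly one.

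\medskip

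First I would establish the key structural fact that $X/\cyc(X)$ contains no loops in the contraction $M/\cyc(X)$; equivalently, for every atom $x \leq X$ with $x \not\leq c$, we have $r(c + x) = r(c) + 1$. Suppose not: then there is a one-dimensional $x \leq X$, $x \nleq c$, with $r(c+x) = r(c)$, so $x \leq \cl(c)$. The idea is that such an $x$ must lie in a circuit contained in $X$, and hence in $\cyc(X) = c$, contradicting $x \nleq c$. To make this precise I would argue that if $r(c+x) = r(c)$ then $c + x$ contains a circuit $C$ through $x$ (since $c+x$ is dependent when $x \nleq c$, as adding $x$ does not raise the rank while raising the dimension), and any such circuit $C \leq c + x \leq X$ is cyclic, so $C \leq \cyc(X) = c$, forcing $x \leq c$, a contradiction. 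Care is needed here with the $q$-analogue: one must use that a dependent space contains a circuit and that circuits are cyclic by definition, both of which are standard facts about $q$-matroids that I would cite or verify from the axioms.

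\medskip

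With the no-loops fact in hand, I would complete the proof by building a maximal flag from $c$ to $X$: choose a chain $c = Y_0 \lessdot Y_1 \lessdot \cdots \lessdot Y_t = X$ of one-dimensional extensions, where $t = \dim(X) - \dim(c)$. At each step $Y_{j+1} = Y_j + x_{j+1}$ with $x_{j+1} \nleq Y_j$, and I would show $r(Y_{j+1}) = r(Y_j) + 1$. This follows from the no-loops argument applied at each level together with monotonicity \textbf{(R2)} and boundedness \textbf{(R1)}: since $x_{j+1} \leq X$ and $x_{j+1} \nleq c$, adding it cannot fail to increase the rank, because otherwise $x_{j+1}$ would again lie in a circuit inside $X$, hence inside $c$, a contradiction. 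Telescoping the $t$ unit increments gives $r(X) - r(c) = t = \dim(X) - \dim(c)$, as desired. The main obstacle I anticipate is justifying rigorously that a one-dimensional extension inside $X$ that does not raise the rank produces a circuit lying in $\cyc(X)$; this requires careful handling of the $q$-matroid definitions of circuit and cyclic space, and one must ensure the chosen circuit is genuinely contained in $X$ rather than merely in the closure.
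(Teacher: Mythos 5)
The paper never proves this lemma itself --- it imports it from \cite{alfarano2022cyclic} as background --- so your argument has to stand on its own merits. Your overall architecture is sound: since the rank of a $q$-matroid increases by at most one along each cover in $\mL(E)$, the identity $r(X)-r(\cyc(X))=\dim(X)-\dim(\cyc(X))$ is equivalent to showing that each step of a chain $\cyc(X)=Y_0\lessdot Y_1\lessdot\cdots\lessdot Y_t=X$ raises the rank by exactly one, and arguing by contradiction at each level is the right shape. The genuine gap is the mechanism you invoke for the contradiction: the existence of a circuit \emph{through} $x$, i.e.\ a circuit $C\leq Y+x$ with $x\leq C$. That is the classical fundamental-circuit fact, and it has no $q$-analogue. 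If $B$ is independent and $B+x$ is dependent, then $B+x$ contains a circuit $C$, but all one can conclude is $C\nleq B$; in the subspace lattice this does not force $x\leq C$, since $C$ can be spanned by vectors of the form $b+\xi$ with $b\in B$ and $\xi$ spanning $x$, none of whose spans equal $x$. A concrete failure: on $E=\F_q^2$ take $M=M[G]$ with $G=\begin{pmatrix}1&0\end{pmatrix}$, so that $\ell=\langle(0,1)\rangle$ is a loop and the \emph{unique} circuit of $M$ is $\ell$; for distinct independent atoms $B,x$ (both $\neq\ell$) one has $r(B+x)=r(B)=1$, yet no circuit contains $x$. Without $x\leq C$, your conclusion $C\leq\cyc(X)=c$ produces no contradiction at all --- $c$ typically contains many circuits --- so the key step fails as written, and you correctly sensed that this was the danger point, though the issue is the ``through $x$'' claim rather than containment in $X$ (which is fine, since $C\leq Y+x\leq X$).

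The good news is that the gap is repairable inside your own framework, using the hypothesis $c\leq Y$ that holds along your chain. Suppose $c\leq Y\leq X$, $x\leq X$ is an atom with $x\nleq Y$, and $r(Y+x)=r(Y)$. Pick a maximal independent subspace $B\leq Y$, so $\dim(B)=r(Y)$. Then $r(B+x)\leq r(Y+x)=\dim(B)<\dim(B+x)$, so $B+x$ is dependent and contains a circuit $C$. Circuits are cyclic (each is a sum of one circuit, in the paper's terminology), and $C\leq B+x\leq X$, hence $C\leq\cyc(X)=c\leq Y$, so $C\leq(B+x)\cap Y$. Since $x\nleq Y$ we have $B+x\nleq Y$, so $\dim\bigl((B+x)\cap Y\bigr)\leq\dim(B)$ and therefore $(B+x)\cap Y=B$; thus $C\leq B$, contradicting the independence of $B$. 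No fundamental circuit through $x$ is needed, and the telescoping along the chain then goes through exactly as you describe. Note also that your atom-level claim at $c$ is the same argument with $Y=c$, so it needs the same repair; as you wrote it, the contradiction ``forcing $x\leq c$'' rests entirely on the unavailable through-$x$ circuit.
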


\begin{theorem}\label{thm:cyc_flats_axioms}
Let $r$ be the rank function of $M_1\square M_2$. If $M_1$ is coloopless and $M_2$ is loopless then $(\mZ',r)$ is a lattice of cyclic flats with respect to $r$.
If $M_1$ has coloops or $M_2$ has loops, then $(\mZ,r)$ is a lattice of cyclic flats with respect to $r$. 
\end{theorem}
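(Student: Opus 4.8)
The plan is to verify the four cyclic-flat axioms \ref{z0}--\ref{z3} of Definition \ref{def:axioms_cyclic_flats} for the pair $(\mZ',r)$ (resp.\ $(\mZ,r)$), exploiting that both sets split as a \emph{lower part} $\iota_1(\mZ_1\setminus\{E_1\})$ and an \emph{upper part} $E_1\oplus(\mZ_2\setminus\{\zero\})$, in which every lower element lies below every upper element. Throughout I would evaluate $r$ via Lemma \ref{lem:ext_cases_rank}: $r(\iota_1(Z))=r_1(Z)$ and $r(E_1\oplus Z)=r_1(E_1)+r_2(Z)$. Axiom \ref{z1} is then immediate, since the minimal element is $\iota_1(\boldsymbol{0}_{\mZ_1})$ (or, in the degenerate case $\mZ_1=\{E_1\}$, the bottom of the upper part), of rank $r_1(\boldsymbol{0}_{\mZ_1})=0$ by \ref{z1} for $M_1$. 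Axiom \ref{z3} is essentially free once \ref{z0} is established: since $\cl_r$ preserves rank we have $r(F\vee G)=r(F+G)$, while Lemma \ref{lem_cyc_closure} applied to $F\cap G$ gives $\dim((F\cap G)/(F\wedge G))=r(F\cap G)-r(F\wedge G)$; substituting, the right-hand side of \ref{z3} collapses to $r(F+G)+r(F\cap G)$, so \ref{z3} is exactly submodularity of $r$, which holds because $r$ is a $q$-matroid rank function by Theorem \ref{thm:free_prod_indep_axiom}.

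For \ref{z2} I would argue by the types of a comparable pair $G<F$. If both lie in $\iota_1(\mZ_1)$ (the lower part, together with $\iota_1(E_1)$ when present) or both lie in $E_1\oplus\mZ_2$, then the rank and dimension increments coincide with those of the corresponding cyclic flats of $M_1$ or $M_2$, and \ref{z2} reduces to \ref{z2} for the factor. The remaining case is $G=\iota_1(Z)$ in the lower part and $F=E_1\oplus Z'$ in the upper part, where $r(F)-r(G)=(r_1(E_1)-r_1(Z))+r_2(Z')$ and $\dim(F)-\dim(G)=(\dim(E_1)-\dim(Z))+\dim(Z')$. The strict lower bound follows from $r_1(Z)<r_1(E_1)$ (a proper cyclic flat of $M_1$ has strictly smaller rank than $E_1$); rewriting the upper bound as $\nu_1(E_1)-\nu_1(Z)+\nu_2(Z')>0$, I would use that nullity is monotone, so $\nu_1(E_1)\ge\nu_1(Z)$, and that any nonzero cyclic flat contains a circuit, so $\nu_2(Z')\ge 1$. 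The sub-cases involving $\iota_1(E_1)$ in $\mZ'$ are handled by the same computation, now using that $M_2$ is loopless, so $\boldsymbol{0}_{\mZ_2}=\zero$.

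The substance of the proof is \ref{z0}. As a poset, $\mZ$ is the ordinal sum of its lower and upper parts, and $\mZ'$ is obtained by inserting $\iota_1(E_1)=E_1\oplus\zero$ as a single shared element that is simultaneously the top of $\iota_1(\mZ_1)$ and the bottom of $E_1\oplus\mZ_2$. Thus $\mZ'$ is the one-point gluing of the lattices $\mZ_1$ and $\mZ_2$ identifying $\boldsymbol{1}_{\mZ_1}=E_1$ with $\boldsymbol{0}_{\mZ_2}=\zero$, which is legitimate exactly when $E_1\in\mZ_1$ and $\zero\in\mZ_2$, i.e.\ $M_1$ coloopless and $M_2$ loopless, and such a gluing is again a lattice. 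In the complementary case I would show $\mZ$ is a lattice by noting that at least one gluing element survives, and that this suffices: if $M_1$ has coloops then $E_1\notin\mZ_1$ and the lower part $\iota_1(\mZ_1)$ keeps its top $\iota_1(\boldsymbol{1}_{\mZ_1})$, which supplies the meet of any two incomparable upper elements; if $M_2$ has loops then the upper part keeps its bottom $E_1\oplus\boldsymbol{0}_{\mZ_2}$, which supplies the join of any two incomparable lower elements. In each branch the surviving part is a full lattice, so the operations on its own side are internal, and the shared element covers the cross-part operation. Finally one checks that these lattice operations are realized by $A\vee B=\cl_r(A+B)$ and $A\wedge B=\cyc_r(A\cap B)$; the containments $A+B\le A\vee B$ and $A\wedge B\le A\cap B$ required by \ref{z0} are immediate from extensivity of $\cl_r$ and contractivity of $\cyc_r$.

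For pairs lying within one part, or one in each part, the identification of $\cl_r(A+B)$ and $\cyc_r(A\cap B)$ with the join and meet follows directly from Corollary \ref{cor:cyc_cl}. The main obstacle, and the step requiring genuine care, is the boundary: two lower elements $\iota_1(Z_A),\iota_1(Z_B)$ whose join in $\mZ_1$ equals $E_1$. The factorwise formula would suggest $\iota_1(E_1)$, but when $M_2$ has loops this is not a flat of the free product; a direct computation with the rank formula of Theorem \ref{thm:free_prod_rank_function} shows that, precisely because $\lambda_1$ vanishes on a space spanning $M_1$, adjoining any loop of $M_2$ keeps the rank at $r_1(E_1)=r(\iota_1(Z_A+Z_B))$, whence $\cl_r(\iota_1(Z_A+Z_B))=E_1\oplus\boldsymbol{0}_{\mZ_2}$, exactly the ordinal-sum join. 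The meet is treated dually at the boundary where the $M_2$-meet of two upper elements is $\zero$ and $M_1$ has coloops. This boundary phenomenon is what forces the dichotomy between $\mZ'$ and $\mZ$, and tracking it correctly across all type combinations is the part of the argument I expect to be most delicate.
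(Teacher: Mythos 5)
Your proposal is correct and follows essentially the same route as the paper's proof: a direct case-by-case verification of \ref{z0}--\ref{z3}, evaluating ranks via Lemma~\ref{lem:ext_cases_rank}, identifying $\vee=\cl_r(\cdot+\cdot)$ and $\wedge=\cyc_r(\cdot\cap\cdot)$ through Corollary~\ref{cor:cyc_cl}, reducing \ref{z3} to submodularity via Lemma~\ref{lem_cyc_closure}, and isolating the same boundary phenomenon (the join of two lower elements whose $\mZ_1$-join is $E_1$ becoming $E_1\oplus\boldsymbol{0}_{\mZ_2}$, and its dual) that forces the dichotomy between $\mZ$ and $\mZ'$. Your only deviation is cosmetic: for the mixed case of \ref{z2} you prove the strict upper bound by rewriting it as $\nu_1(E_1)-\nu_1(Z)+\nu_2(Z')>0$ using monotonicity of nullity and dependence of nonzero cyclic flats, where the paper instead invokes \ref{z2} within $\mZ_1$ -- your version has the mild advantage of applying uniformly even when $E_1\notin\mZ_1$.
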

\begin{proof} 
For $Z_1,Z_2 \in \mZ$ we define $Z_1 \vee Z_2:= \cl_r(Z_1 + Z_2)$ and $Z_1 \wedge Z_2:=\cyc_r(Z_1 \cap Z_2)$.
We first assume that $M_1$ does not have a coloop and $M_2$ has a loop. With this assumption,~$E_1$ is a cyclic flat of $M_1$ and $\boldsymbol{0}$ is not a cyclic flat of $M_2$. The proof of the case where $M_1$ has a coloop and $M_2$ does not have a loop is done in a very similar way, so we omit it.
\begin{enumerate}
    \item[\ref{z0}] $(\mZ,\leq,\vee,\wedge)$ is a lattice. To see this, let $Z_1,Z_2\in\mZ$. We distinguish three cases.
    \begin{itemize}
        \item  If $Z_1,Z_2\in\iota_1(\mZ_1)$, then for $i=1,2$, $Z_i=\overline{Z}_i\oplus \boldsymbol{0}$, with $\overline{Z}_i\ne E_1$. 
        In this case we have nothing to show since $\mZ_1$ is a lattice and $Z_1\vee Z_2 = (\overline{Z}_1\vee_{\mZ_1}\overline{Z}_2)\oplus \boldsymbol{0}$ and $Z_1\wedge Z_2 =(\overline{Z}_1\wedge_{\mZ_1}\overline{Z}_2)\oplus \boldsymbol{0}$. If $\bar{Z}_1\vee \bar{Z}_2=E_1$, then $Z_1\vee Z_2=E_1\oplus\iota_2(\boldsymbol{0}_{\mathcal{Z}_2})$.
        \item If $Z_i=E_1\oplus \overline{Z}_i$, with $\overline{Z}_i\in\mZ_2$ for $i=1,2$, then by Corollary~\ref{cor:cyc_cl}
        \begin{align*}
            Z_1\vee Z_2&=(E_1\oplus \overline{Z}_1)\vee(E_1\oplus \overline{Z}_2)= E_1\oplus (\overline{Z}_1\vee_{\mZ_2} \overline{Z}_2),\\
             Z_1\wedge Z_2&=(E_1\oplus \overline{Z}_1)\wedge(E_1\oplus \overline{Z}_2)= E_1\oplus (\overline{Z}_1\wedge_{\mZ_2} \overline{Z}_2).
        \end{align*}
        Since $\mZ_2$ is a lattice, we have that $(\overline{Z}_1\vee_{\mZ_2} \overline{Z}_2), \;(\overline{Z}_1\wedge_{\mZ_2} \overline{Z}_2)\in\mZ_2$, then $Z_1\vee Z_2$ and $Z_2\wedge Z_2$ belong to $\mZ$.
        \item Let $Z_1=\overline{Z}_1\oplus \boldsymbol{0}$, with $\overline{Z}_1\in\mZ_1$ and let $Z_2=E_1\oplus \overline{Z}_2$, with $\overline{Z}_2\in\mZ_2$.
        Then
        \begin{align*}
            Z_1\vee Z_2&= (\overline{Z}_1\oplus  \boldsymbol{0})\vee (E_1\oplus \overline{Z}_2) = E_1\oplus \overline{Z}_2\in\mZ,\\
            Z_1\wedge Z_2&= (\overline{Z}_1\oplus \boldsymbol{0})\wedge (E_1\oplus \overline{Z}_2) = \overline{Z}_1\oplus \boldsymbol{0}\in\mZ.
        \end{align*}
        Moreover notice that $\boldsymbol{0}_{\mZ}=\boldsymbol{0}_{\mZ_1}\oplus \boldsymbol{0}$ and $\boldsymbol{1}_{\mZ}=E_1\oplus \boldsymbol{1}_{\mZ_2}$.
    \end{itemize}
    \item[\ref{z1}] This is clearly satisfied since $r(\boldsymbol{0}_\mZ)=r(\boldsymbol{0}_{\mZ_1}\oplus \boldsymbol{0}) = 0$.
    \item[\ref{z2}] Let $Z_1,Z_2\in\mZ$. If they both belong to $\iota_1(\mZ_1)$ or $E_1\oplus \mZ_2$ then there is nothing to show. The only case to prove is when $Z_1=\overline{Z}_1\oplus \boldsymbol{0}$ and $Z_2=E_1\oplus \overline{Z}_2$, with $\overline{Z}_1\in\mZ_1$ and $\overline{Z}_2\in\mZ_2$. Note that $Z_1< Z_2$. Then by Lemma~\ref{lem:ext_cases_rank}
    \begin{align*}
        r(Z_2)-r(Z_1)&= r(E_1\oplus \overline{Z}_2)-r(\overline{Z}_1\oplus \boldsymbol{0}) \\
        &= r_1(E_1)+r_2(\overline{Z}_2)-r_1(\overline{Z}_1) \\
        &< \dim(E_1) -\dim(\overline{Z}_1)+r_2(\overline{Z}_2)\\ 
        &\leq \dim(E_1) -\dim(\overline{Z}_1)+\hh(\overline{Z}_2) \\
        &=\dim(E_1\oplus \overline{Z}_2)-\dim(\overline{Z_1}) \\
        &=\dim(Z_2)-\dim(Z_1),
    \end{align*}
    where the strict inequality follows since \ref{z2} holds in $\mZ_1$.
    \item[\ref{z3}] By the submodularity of $r$, we have that for every $F,G\in\mZ$
    \begin{align*}
        r(F)+r(G) &\geq r(F+G)+r(F\cap G).
    \end{align*}
    Now, \ref{z3} follows directly from the fact that $r(F+G)=r(F\vee G)$ and Lemma \ref{lem_cyc_closure} applied to~$F\cap G$.
\end{enumerate}

We assume now that $M_1$ has no coloops and $M_2$ has no loops. In this case, $E_1\in\mZ_1$ and $\boldsymbol{0}\in\mZ_2$. We want to show that the collection $\mZ'=\mZ\cup \iota_1(E_1)$ satisfies the axioms \ref{z0}--\ref{z3}. Note that \ref{z1} and \ref{z3} can be proved as before. 
\begin{enumerate}
\item[\ref{z0}] Let $Z\in\iota_1(\mZ_1\setminus \{\zero\})$. Then $Z\vee \iota_1(E_1)=\iota_1(E_1)\in\mZ'$ and $Z\wedge \iota_1(E_1)=Z\in\mZ'$. If $Z\in E_1\oplus \mZ_2\setminus \{\zero\}$, then $Z\vee \iota_1(E_1)=Z\in\mZ'$ and $Z\wedge \iota_1(E_1)=\iota_1(E_1)\in\mZ'$. The other cases have already been covered before. Moreover, also in this case $\boldsymbol{0}_{\mZ'}=\boldsymbol{0}_{\mZ_1}\oplus \boldsymbol{0}$ and $\boldsymbol{1}_{\mZ'}=E_1\oplus \boldsymbol{1}_{\mZ_2}$.     
\item[\ref{z2}] Let $Z_1,Z_2\in\mathcal{Z}'$ be such that $Z_1<Z_2$. As before, the only non-trivial case to show is when $Z_1\in\iota_1(\mathcal{Z}_1)\backslash(E_1\oplus\boldsymbol{0})$ and $Z_2\in E_1\oplus\mathcal{Z}_2$. However, in this case, we have that $Z_1,Z_2\in\mathcal{Z}$, so the proof follows from the previous case for \ref{z2}.
\end{enumerate}
The case where $M_1$ has a coloop and $M_2$ has a loop is proved in a similar way to the above. The only difference is that $\iota_1(1_{\mathcal{Z}_1}),E_1\oplus\iota_2(\boldsymbol{0}_{\mathcal{Z}_2})\in\mathcal{Z}$ and $\iota_1(1_{\mathcal{Z}_1})<E_1\oplus\boldsymbol{0}<E_1\oplus\iota_2(\boldsymbol{0}_{\mathcal{Z}_2})$.
\end{proof}

\begin{remark}
    Note that if $M_1$ has no coloop and $M_2$ has no loop, then $\mZ'$ is given by ``stacking" $\mZ_1$ and $\mZ_2$ on top of each other as lattices and identifying the maximal element of $\mZ_1$ and the least element of $\mZ_2$. We give an example to illustrate the situation better.
\end{remark}

\begin{example}\label{ex:no_loop_coloop}
    Let $\F=\F_2$. Consider the following three $q$-matroids. Let $M=(\F^5,r_M)$ be the $q$-matroid over the ground space $\F^5$ with collection of cyclic flats $\{F_0,\ldots,F_4\}$, where
    \begin{gather*}
        F_0=\langle e_1+e_3+e_5\rangle, \;\;  F_1=\langle e_1+e_5, e_2+e_4+e_5, e_3 \rangle,\\
        F_2=\langle e_1+e_4, e_2+e_4,  e_3+e_4+e_5\rangle, \;\;
        F_3=\langle e_1+e_3,e_2,e_5\rangle, \;\; F_4=\F^5,
    \end{gather*}
    and $r_M(F_0)=0$, $r_M(F_i)=1$ for $i=1,2,3$ and $r_M(F_4)=2$.
    Let $N=(\F^8,r_N)$ be the $q$-matroid over $\F^8$ given in \cite[Example 4.7] {gluesing2023decompositions}. Its cyclic flats are given by $\{G_0,\ldots,G_4\}$, where
    $$ G_0=\zero,\ G_1=\langle e_1,e_2\rangle,\ G_2=\langle e_1,e_2,e_3,e_4\rangle,\ G_3=\langle e_5,e_6,e_7,e_8\rangle,\ G_4=\F^8,$$
    and $r_N(G_i)=i$ for $i=0,1,2,3,4$.
    Let $L$ be the $q$-matroid $(\F^5,r_L)$, whose cyclic flats are given by $\{Z_0,\ldots,Z_4\}$, where
    \begin{gather*}
        Z_0=\zero, \; Z_1=\langle e_1+e_4+e_5,e_2+e_4\rangle, \;
        Z_2=\langle  e_1+e_3,e_4\rangle, \\
        Z_3=\langle e_3+e_5, e_1+e_2+e_4+e_5\rangle, \; Z_4=\langle e_1+e_5, e_2,e_3+e_4, e_4\rangle,
    \end{gather*}
    with $r_L(Z_0)=0, \; r_L(Z_1)=r_L(Z_2)=r_L(Z_3)=1$ and $r_L(Z_4)=2$.
    In particular, $M$ has no coloop and $N$ has no loops. Hence $\mZ' := (\mZ(M)\oplus \boldsymbol{0})\cup (\F^5\oplus\mZ(N))$ is as shown in Figure \ref{fig:Example1.1}.
    Since $L$ has a coloop and $M$ has a loop, we have that 
    $\mZ := (\mZ(L)\oplus \boldsymbol{0})\cup (\F^5\oplus\mZ(M))$ is as shown in Figure \ref{fig:Example1.2}.

\centering
\begin{figure}[ht!]
\begin{minipage}{.49\textwidth}
\centering
   \begin{tikzpicture}[x=1.4cm,y=1.8cm]
\node at (0,0)    (F0)  {$F_0\oplus \boldsymbol{0}$};
\node at (-1,1)   (F1) {$F_1\oplus \boldsymbol{0}$};
\node at (0,1)   (F2) {$F_2\oplus \boldsymbol{0}$};
\node at (1,1)   (F3) {$F_3\oplus \boldsymbol{0}$};
\node at (0,2)   (F4) {$\F^5\oplus \boldsymbol{0}$};
\node at (1,3)   (G1) {$\F^5\oplus G_1$};
\node at (2,4)   (G2) {$\F^5\oplus G_2$};
\node at (-1,4)   (G3) {$\F^5\oplus G_3$};
\node at (0,5)   (G4) {$\F^5\oplus \F^8$};
\draw (F0) -- (F1);
\draw (F0) -- (F2);
\draw (F0) -- (F3);
\draw (F1) -- (F4);
\draw (F2) -- (F4);
\draw (F3) -- (F4);
\draw (F4) -- (G1);
\draw (F4) -- (G3);
\draw (G1) -- (G2);
\draw (G3) -- (G4);
\draw (G2) -- (G4);
\end{tikzpicture}
    \caption{Lattice of cyclic flats $\mZ'$ from Example \ref{ex:no_loop_coloop}.}
    \label{fig:Example1.1}
\end{minipage}
\begin{minipage}{.49\textwidth}
\centering
   \begin{tikzpicture}[x=1.7cm,y=1.8cm]
\node at (0,0)    (Z0)  {$\boldsymbol{0}$};
\node at (-1,1)   (Z1) {$Z_1\oplus \boldsymbol{0}$};
\node at (0,1)   (Z2) {$Z_2\oplus \boldsymbol{0}$};
\node at (1,1)   (Z3) {$Z_3\oplus \boldsymbol{0}$};
\node at (0,2)   (Z4) {$Z_4\oplus \boldsymbol{0}$};
\node at (0,3)   (F0) {$\F^5\oplus F_0$};
\node at (-1,4)   (F1) {$\F^5\oplus F_1$};
\node at (0,4)   (F2) {$\F^5\oplus F_2$};
\node at (1,4)   (F3) {$\F^5\oplus F_3$};
\node at (0,5)   (F4) {$\F^5\oplus \F^5$};
\draw (Z0) -- (Z1);
\draw (Z0) -- (Z2);
\draw (Z0) -- (Z3);
\draw (Z1) -- (Z4);
\draw (Z2) -- (Z4);
\draw (Z3) -- (Z4);
\draw (Z4) -- (F0);
\draw (F0) -- (F1);
\draw (F0) -- (F2);
\draw (F0) -- (F3);
\draw (F1) -- (F4);
\draw (F2) -- (F4);
\draw (F3) -- (F4);
\end{tikzpicture}
    \caption{Lattice of cyclic flats $\mZ$ of Example \ref{ex:no_loop_coloop}.}
    \label{fig:Example1.2}
\end{minipage}
\end{figure}

\end{example}

We now describe the lattice of cyclic flats of $M_1 \square M_2$ for a pair of $q$-matroids $M_1$ and $M_2$. This is the $q$-analogue of \cite[Proposition 6.1]{crapo2005unique}, however our approach is different. 

\begin{theorem}\label{thm:union_of_cycflats}
Let $M_1$ and $M_2$ be $q$-matroids over the ground spaces $E_1$ and $E_2$, respectively. Then
$$\mathcal{Z}(M_1\square M_2)=
\begin{cases}
    \mathcal{Z} \cup \iota_1(E_1) & \textnormal{ if $M_1$ has no coloops and $M_2$ has no loops,} \\
    \mZ & \textnormal{ otherwise},
\end{cases}$$
where $\mZ$ is defined as in \eqref{eq:cyc_flat}.
\end{theorem}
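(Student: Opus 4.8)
The plan is to show that the set described in the statement is exactly $\mathcal{Z}(M_1 \square M_2)$ by combining Theorem~\ref{thm:cyc_flats_axioms} with a uniqueness principle for cyclic flats. Since \cite{alfarano2022cyclic} establishes that a $q$-matroid is uniquely determined by its lattice of cyclic flats together with the rank values on them, and since Theorem~\ref{thm:cyc_flats_axioms} already verifies that $(\mathcal{Z}', r)$ (resp.\ $(\mathcal{Z}, r)$) satisfies the cyclic flat axioms \ref{z0}--\ref{z3}, it suffices to verify that the $q$-matroid reconstructed from this candidate lattice via the rank formula of Lemma~\ref{lem:rank_function_cyclic_flats} coincides with $M_1 \square M_2$, whose rank function is already pinned down by Theorem~\ref{thm:free_prod_rank_function}. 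Equivalently, I would show directly that each element of the candidate set genuinely is a cyclic flat of $M_1 \square M_2$, and that there are no others.

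First I would show containment: every $Z$ in the candidate set is a cyclic flat of $M_1\square M_2$. For $Z = \iota_1(\overline{Z})$ with $\overline{Z} \in \mathcal{Z}_1$, I would use Corollary~\ref{cor:cyc_cl}(1) to argue that $\cl_r$ and $\cyc_r$ on such spaces reduce to the operators $\cl_{r_1}, \cyc_{r_1}$ of $M_1$, so that $\overline{Z}$ being a cyclic flat of $M_1$ forces $\iota_1(\overline{Z})$ to be fixed by both $\cl_r$ and $\cyc_r$, hence a cyclic flat. Symmetrically, for $Z = E_1 \oplus \overline{Z}$ with $\overline{Z} \in \mathcal{Z}_2$, I would invoke Corollary~\ref{cor:cyc_cl}(2) and the analogous reduction to $M_2$. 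The boundary element $\iota_1(E_1)$ requires separate attention: it is a flat precisely when $E_1$ is not a coloop-containing configuration (i.e.\ $M_1$ has no coloops) and it is cyclic precisely when it is a sum of circuits, which happens when $M_2$ has no loops so that $\iota_1(E_1)$ is not ``padded'' by a loop of $M_2$; this is exactly the dichotomy in the statement.

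Next I would show exhaustiveness: no subspace outside the candidate list can be a cyclic flat. Here the key tool is Lemma~\ref{lem_cyc_closure} together with the explicit rank formula from Theorem~\ref{thm:free_prod_rank_function}. For an arbitrary cyclic flat $Z$ of $M_1 \square M_2$, I would analyse its ``shape'' via the projections $\pi_1(Z \cap \iota_1(E_1))$ and $\pi_2(Z)$, using Lemma~\ref{lem:rank_nullity} to control dimensions. The idea is that being a flat forces $Z$ to absorb its closure, while being cyclic forces it to contain the cyclic core; applying the two-sided reductions of Corollary~\ref{cor:cyc_cl} to the ``$E_1$-part'' and the ``$E_2$-part'' of $Z$ separately should force $\pi_1(Z\cap\iota_1(E_1))$ to be a cyclic flat of $M_1$ and $\pi_2(Z)$ to be a cyclic flat of $M_2$, with $Z$ either entirely inside $\iota_1(E_1)$ or entirely containing $\iota_1(E_1)$. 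The crossing of the boundary $\iota_1(E_1)$ is what produces the $\cup\,\iota_1(E_1)$ term and its loop/coloop-dependent presence.

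The main obstacle I anticipate is the exhaustiveness direction, specifically ruling out ``mixed'' cyclic flats $Z$ that neither lie within $\iota_1(E_1)$ nor contain it. One must show that if $\pi_1(Z \cap \iota_1(E_1)) \neq E_1$ but $\pi_2(Z) \neq \zero$, then $Z$ fails to be either a flat or cyclic. Intuitively, the rank formula has a $\min$ term that makes the free product rigidly ``triangular'': a space straddling the boundary can always be enlarged without increasing rank (failing to be a flat) or shrunk while staying cyclic, because the rank-lack of the $E_1$-part and the nullity of the $E_2$-part cannot both be active in an intermediate configuration. Making this rigidity precise, while carefully tracking the loop/coloop cases that govern whether $\iota_1(E_1)$ itself appears, is where the real work lies; once it is established, the result follows immediately by appeal to the uniqueness of the cyclic-flat description. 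Rather than re-proving the axioms, I would lean on Theorem~\ref{thm:cyc_flats_axioms} to guarantee the candidate is \emph{a} valid lattice of cyclic flats and then use the reconstruction of Lemma~\ref{lem:rank_function_cyclic_flats} to confirm it reconstructs the correct rank function, sidestepping much of the direct combinatorial casework.
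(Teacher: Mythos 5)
Your final plan---invoke Theorem~\ref{thm:cyc_flats_axioms} to obtain a $q$-matroid whose lattice of cyclic flats is the candidate set, reconstruct its rank function via Lemma~\ref{lem:rank_function_cyclic_flats}, check that it agrees with the formula of Theorem~\ref{thm:free_prod_rank_function}, and conclude by uniqueness of the $q$-matroid determined by cyclic flats with their ranks---is exactly the proof the paper gives. The direct containment/exhaustiveness argument you sketch in the middle (ruling out ``mixed'' cyclic flats straddling $\iota_1(E_1)$) is the harder route and you rightly discard it; the only work your outline leaves unexecuted is the explicit two-case computation (minimizing cyclic flat lying below versus above $\iota_1(E_1)$) by which the paper confirms that the reconstructed rank function matches the free-product rank function.
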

\begin{proof}
As before, we will denote the rank function of $M_1\square M_2$ by $r$. By Theorem \ref{thm:cyc_flats_axioms} there exists a $q$-matroid $L=(E_1\oplus E_2,r_L)$ such that $\mathcal{Z}(L)=\mathcal{Z}$. We will show that $r=r_L$. Clearly, we have that $L|E_1\cong M_1$ and $L/E_1\cong M_2$. Let $X\in\mathcal{L}(E_1\oplus E_2)$. By Lemma \ref{lem:rank_function_cyclic_flats}, we have that
$$r_L(X)=\textup{min}\{r_L(Z)+\hh((X+Z)/Z):Z\in\mathcal{Z}(L)\}.$$
Suppose that $Z'\in\mZ(L)$ is such that $r_L(X)=r_L(Z')+\hh(X/X\cap Z')$. If $Z'\leq E_1\oplus \boldsymbol{0}$, then there exists $\overline{Z}'\in\mZ_1$ such that $Z'=\iota_1(\overline{Z}')$ and $r_L(Z')=r_1(\overline{Z}')$ by Lemma~\ref{lem:ext_cases_rank}. Therefore,
{\small{\begin{align*}
r_L(Z')+\hh(X/X\cap Z')&=r_1(\overline{Z}')+\hh(X/X\cap Z')\\
&=r_1(\overline{Z}')+\hh(X)-\hh(X\cap Z')\\
&=r_1(\overline{Z}')+\hh(X\cap(E_1\oplus \boldsymbol{0})) + \hh(\pi_2(X)) -\hh(X\cap Z') - \hh(\pi_2(Z'\cap X))\\
&=r_1(\overline{Z}') +\hh((X\cap (E_1\oplus \boldsymbol{0}))/(Z'\cap X)) +r_2(\pi_2(X))+\nu_2(\pi_2(X))\\
&= r_1(\pi_1(X\cap( E_1\oplus \boldsymbol{0})))+r_2(\pi_2(X))+\nu_2(\pi_2(X)),
\end{align*}}}
Where the third equality follows from Lemma~\ref{lem:rank_nullity}. If $Z'\geq E_1 \oplus \boldsymbol{0}$, then $Z'=E_1\oplus \overline{Z}$, with $\overline{Z}\in\mZ_2$. Then by Lemma~\ref{lem:ext_cases_rank}, we have
{\small{\begin{align*}
r_L(Z')+\hh(X/X\cap Z')&=r_L(E_1\oplus \overline{Z}) + \hh(X/X\cap Z')\\
&=r_1(E_1)+r_2(\overline{Z})+\hh(X/X\cap Z')\\
&=r_1(E_1)+r_2(\overline{Z})+\hh(X)-\hh(X\cap Z')\\
&=r_1(E_1)+r_2(\pi_2(Z'))+\hh(X)-\hh(X\cap Z')\\
&=r_1(\pi_1(X\cap (E_1\oplus \boldsymbol{0}))+\lambda_1(\pi_1(X\cap (E_1\oplus \boldsymbol{0}))\\
&\; \; \;+r_2(\pi_2(Z'))+\hh(\pi_2(X)/\pi_2(X\cap Z'))\\
&= r_1(\pi_1(X\cap (E_1\oplus \boldsymbol{0}))+\lambda_1(\pi_1(X\cap (E_1\oplus \boldsymbol{0}))+r_2(X).
\end{align*}}}

By Theorem \ref{thm:free_prod_rank_function}, we get that $r_L=r$. 
\end{proof}

We can characterise the free product of two uniform $q$-matroids in terms of cyclic flats as follows.

\begin{theorem}\label{thm:char_uniform}
    Let $k_1,k_2,n_1,n_2$ be integers and let $k=k_1+k_2, n=n_1+n_2$, $0<k<n$. Let $M=(\F_q^n,r)$ be a $q$-matroid of rank $k$. The following are equivalent.
    \begin{enumerate}
        \item $M\cong\mU_{k_1,n_1}(q)\square\mU_{k_2,n_2}(q)$.
        \item  $\mZ(M)=\{\boldsymbol{0}, \F_q^{n_1}\oplus \boldsymbol{0}, \F_q^n\}$ whose corresponding ranks are equal to $0,k_1,k$, respectively.
    \end{enumerate}
\end{theorem}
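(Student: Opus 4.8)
The plan is to prove both implications by computing the lattice of cyclic flats of the free product explicitly and then invoking the fact that a $q$-matroid is determined by its cyclic flats together with their ranks (Lemma~\ref{lem:rank_function_cyclic_flats}). Throughout I restrict to the range $0<k_1<n_1$ and $0<k_2<n_2$, which is the range in which the equivalence holds: in statement~$(2)$ the listed subspaces form a chain $\boldsymbol{0}<\F_q^{n_1}\oplus\boldsymbol{0}<\F_q^n$ of cyclic flats, so axiom~\ref{z2} applied to the two covers forces $0<k_1<n_1$ and $0<k_2=k-k_1<n_2=n-n_1$; for direction $(1)\Rightarrow(2)$ I take these inequalities as part of the standing hypothesis (in the boundary cases $k_i\in\{0,n_i\}$ one factor consists entirely of loops or of coloops and the free product collapses, e.g.\ to a smaller uniform $q$-matroid, so $(2)$ genuinely requires the open range).

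For $(1)\Rightarrow(2)$ I would first apply Lemma~\ref{lem:cyc_flats_uniform} to each factor: since $0<k_i<n_i$, the $q$-matroid $M_i=\mathcal{U}_{k_i,n_i}(q)$ has $\mathcal{Z}_i=\{\boldsymbol{0},E_i\}$ with $r_i(\boldsymbol{0})=0$ and $r_i(E_i)=k_i$. In particular $\boldsymbol{1}_{\mathcal{Z}_1}=E_1$ and $\boldsymbol{0}_{\mathcal{Z}_2}=\boldsymbol{0}$, so $M_1$ has no coloops and $M_2$ has no loops, and Theorem~\ref{thm:union_of_cycflats} applies in its first branch, giving $\mathcal{Z}(M_1\square M_2)=\mathcal{Z}\cup\iota_1(E_1)$ with $\mathcal{Z}$ as in~\eqref{eq:cyc_flat}. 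Substituting $\mathcal{Z}_1\setminus\{E_1\}=\{\boldsymbol{0}\}$ and $\mathcal{Z}_2\setminus\{\boldsymbol{0}\}=\{E_2\}$ yields $\mathcal{Z}=\{\boldsymbol{0},\,E_1\oplus E_2\}$, hence $\mathcal{Z}(M_1\square M_2)=\{\boldsymbol{0},\,\F_q^{n_1}\oplus\boldsymbol{0},\,\F_q^n\}$. The rank values then come from Lemma~\ref{lem:ext_cases_rank}: one has $r(\boldsymbol{0})=0$, part~(1) gives $r(\F_q^{n_1}\oplus\boldsymbol{0})=r_1(E_1)=k_1$, and part~(2) gives $r(\F_q^n)=r_1(E_1)+r_2(E_2)=k_1+k_2=k$, which is exactly statement~$(2)$.

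For $(2)\Rightarrow(1)$, recall that $(2)$ forces $0<k_i<n_i$, so by the computation just carried out the free product $M':=\mathcal{U}_{k_1,n_1}(q)\square\mathcal{U}_{k_2,n_2}(q)$ has, as subspaces of $\F_q^n$, exactly the cyclic flats $\{\boldsymbol{0},\F_q^{n_1}\oplus\boldsymbol{0},\F_q^n\}$ with ranks $0,k_1,k$ — the same collection and the same ranks as $M$. Since the rank function of any $q$-matroid is recovered from its cyclic flats and their ranks through the minimum formula of Lemma~\ref{lem:rank_function_cyclic_flats}, the rank functions of $M$ and $M'$ agree on every $A\in\mathcal{L}(\F_q^n)$; hence $M=M'$, and in particular $M\cong\mathcal{U}_{k_1,n_1}(q)\square\mathcal{U}_{k_2,n_2}(q)$.

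The argument is mostly bookkeeping with the cyclic-flats machinery, so the only genuine care is twofold: verifying the loop/coloop hypotheses so as to select the correct branch of Theorem~\ref{thm:union_of_cycflats}, and pinning down the range $0<k_i<n_i$. The latter is the main obstacle in the sense that it is the precise point at which the equivalence would fail if the boundary cases were admitted, and it is also what licenses the backward direction to conclude via uniqueness.
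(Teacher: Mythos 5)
Your proof is correct and takes essentially the same route as the paper, whose (very terse) proof likewise derives the theorem by combining Lemma~\ref{lem:cyc_flats_uniform} with Theorem~\ref{thm:union_of_cycflats}; you merely spell out the computation of $\mZ(M_1\square M_2)$ and its ranks via Lemma~\ref{lem:ext_cases_rank}, and close the converse through the uniqueness of a $q$-matroid given its cyclic flats and their ranks (Lemma~\ref{lem:rank_function_cyclic_flats}). Your extra observation that axiom~\ref{z2} forces $0<k_i<n_i$, and that the direction $(1)\Rightarrow(2)$ genuinely fails at the boundary cases $k_i\in\{0,n_i\}$ (e.g.\ $\mU_{n_1,n_1}(q)\square\,\mU_{k_2,n_2}(q)\cong\mU_{k,n}(q)$ has only the two trivial cyclic flats), is a valid refinement of the paper's statement rather than a deviation from its method.
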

\begin{proof}
     The results easily follows from Lemma \ref{lem:cyc_flats_uniform} and Theorem \ref{thm:union_of_cycflats}.
\end{proof}


\section{Weak ordering and unique factorisation}\label{sec:weak ordering}

In section~\ref{subsec:weak_order} we establish that the free product of $q$-matroids $M_1,\dots,M_k$ on ground spaces $E_1,\dots,E_k$ respectively is the unique (up to isomorphism) $q$-matroid with the maximum number of independent spaces among all $q$-matroids $N$ on $E_1 \oplus\dots\oplus E_k$ such that $N[E_{i-1},E_i]\cong M_i$ for $i\in \{1,\dots,k\}$. This gives the $q$-analogue of one of the main results of \cite[section 4]{crapo2005unique}. Furthermore, in \cite{crapo2005unique} it is noted that the direct sum of matroids is the ``most dependent" way of combining two matroids (in a sense that we will make precise later). 
We will show that, somewhat surprisingly, the $q$-analogue of this result does not hold.

In section~\ref{subsec:uniq_fact} we show that any $q$-matroid can be factorised uniquely (up to isomorphism) into irreducible components, which gives the $q$-analogue of another one of the main results found in \cite[section 6]{crapo2005unique}.

Before proceeding with sections \ref{subsec:weak_order} and \ref{subsec:uniq_fact} we recall the definitions and some basic properties of the direct sum of $q$-matroids and we establish some relevant preliminary results.

\begin{definition}
    Let $M_i=(E_i,r_i),\,i=1,2,$ be a pair of $q$-matroids and set $E=E_1\oplus E_2$.\\
For $i=1,2$, let $r_i'$ be the map defined by $r'_i:\mL(E)\longrightarrow \mathbb{N}_0,\ V\longmapsto r_i(\pi_i(V))$ for all $V \in \mL(E)$.
Let $r$ be the map defined by:
\begin{equation*}
r:\mL(E)\longrightarrow\mathbb{N}_0,\quad V\longmapsto \dim V+\min_{X\leq V}\big(r'_1(X)+r'_2(X)-\dim X\big)\:\:\forall \:\:V \in \mL(E).
\end{equation*}
Then $M:=(E,r)$ is a $q$-matroid, called the \textbf{direct sum of $M_1$ and~$M_2$}, and is denoted by $M_1\oplus M_2$.
\end{definition}

\begin{lemma}[{\cite[Theorem 6.2]{gluesing2023decompositions}}]\label{lem:sum_cyclic_flats}
    For each $i=1,2$, let $M_i=(E_i,\rho_i)$, be a $q$-matroid and let $\mathcal{Z}_i$ be the lattice of cyclic flats of $M_i$. 
    Let $\mZ_1\oplus\mZ_2=\{Z_1\oplus Z_2: Z_1\in\mZ_1, \; Z_2\in\mZ_2\}$. Then 
    $$\mZ(M_1\oplus M_2)=\mZ_1\oplus\mZ_2.$$
\end{lemma}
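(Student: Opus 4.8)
The plan is to prove that $\mZ(M_1\oplus M_2)=\mZ_1\oplus\mZ_2$ by showing that the candidate collection $\mZ_1\oplus\mZ_2$, equipped with the rank function $\rho$ of the direct sum restricted to it, satisfies the cyclic flat axioms \ref{z0}--\ref{z3} of Definition~\ref{def:axioms_cyclic_flats}, and that the rank function it determines via Lemma~\ref{lem:rank_function_cyclic_flats} coincides with $\rho$. This is exactly the two-step strategy used in the free product case (Theorems~\ref{thm:cyc_flats_axioms} and~\ref{thm:union_of_cycflats}), so I would mirror that architecture. First I would record the key structural fact that for a space of the form $Z_1\oplus Z_2$ the direct-sum rank decouples as $\rho(Z_1\oplus Z_2)=\rho_1(Z_1)+\rho_2(Z_2)$; this should fall out of the explicit formula in the definition of $M_1\oplus M_2$, since the minimand $r_1'(X)+r_2'(X)-\dim X$ is minimized componentwise on a direct-sum space.

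Next I would verify the lattice axiom \ref{z0}. The natural claim is that for $Z_1\oplus Z_2$ and $Z_1'\oplus Z_2'$ in $\mZ_1\oplus\mZ_2$, the join is $(Z_1\vee_{\mZ_1}Z_1')\oplus(Z_2\vee_{\mZ_2}Z_2')$ and the meet is $(Z_1\wedge_{\mZ_1}Z_1')\oplus(Z_2\wedge_{\mZ_2}Z_2')$, so that $\mZ_1\oplus\mZ_2$ is simply the product lattice $\mZ_1\times\mZ_2$. To justify this I would compute the closure and cyclic operators of $M_1\oplus M_2$ on direct-sum spaces and show they act componentwise, i.e. $\cl_\rho(A_1\oplus A_2)=\cl_{\rho_1}(A_1)\oplus\cl_{\rho_2}(A_2)$ and similarly for $\cyc_\rho$; this is the direct-sum analogue of Corollary~\ref{cor:cyc_cl} and follows from the rank decoupling. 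The axioms \ref{z1}, \ref{z2}, \ref{z3} then reduce to arithmetic: \ref{z1} because $\rho(\boldsymbol{0})=0$; \ref{z2} by adding the corresponding strict inequalities for $\mZ_1$ and $\mZ_2$ (treating the two components of a strict containment $Z_1\oplus Z_2 < Z_1'\oplus Z_2'$ separately and using that at least one component is strict); and \ref{z3} by adding the submodularity inequalities for the two factor lattices, noting that on product elements the cross term $\hh((F\cap G)/(F\wedge G))$ splits additively over the two components.

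With the axioms established, Theorem~\ref{thm:cyc_flats_axioms}'s analogue gives a $q$-matroid $L$ whose cyclic flats are exactly $\mZ_1\oplus\mZ_2$ with rank $\rho_L(Z_1\oplus Z_2)=\rho_1(Z_1)+\rho_2(Z_2)$. The final step is to show $r_L=\rho$. Using Lemma~\ref{lem:rank_function_cyclic_flats}, for arbitrary $X\in\mL(E)$ I would write
\[
r_L(X)=\min\{\rho_1(Z_1)+\rho_2(Z_2)+\hh((X+Z_1\oplus Z_2)/(Z_1\oplus Z_2)):Z_1\in\mZ_1,\,Z_2\in\mZ_2\},
\]
and compare this with the closed-form direct-sum rank in the definition of $M_1\oplus M_2$. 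I expect the main obstacle to be precisely this reconciliation of the two rank formulas: the direct-sum rank is defined by a minimum of $r_1'(X)+r_2'(X)-\dim X$ over \emph{all} subspaces $X\leq V$, not just direct-sum spaces, whereas the cyclic-flat formula ranges over the product lattice, so the two minimizations are over genuinely different index sets and do not obviously match term by term. I would bridge them by invoking the factor-wise characterizations: by Lemma~\ref{lem:rank_function_cyclic_flats} applied to each $M_i$ we have $\rho_i(\pi_i(V))=\min_{Z_i\in\mZ_i}\{\rho_i(Z_i)+\hh((\pi_i(V)+Z_i)/Z_i)\}$, and the main lemma to prove is that the optimal $Z$ in the $L$-formula can be taken of product form, reducing the codimension term to the sum of the two factor codimension terms. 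Once that reduction is in place, $r_L(X)$ collapses to $\rho_1(\pi_1(X))+\rho_2(\pi_2(X))$ minus an appropriate overlap correction, which I would match against the defining formula for $\rho$ to conclude $r_L=\rho$.
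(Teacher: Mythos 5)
Note first that the paper does not prove this lemma at all: it is imported verbatim from \cite[Theorem 6.2]{gluesing2023decompositions}, so there is no internal proof to compare with. Your proposed architecture---verify axioms \ref{z0}--\ref{z3} for $\mZ_1\oplus\mZ_2$ with componentwise rank values, invoke the cryptomorphism to get a $q$-matroid $L$ with $\mZ(L)=\mZ_1\oplus\mZ_2$, then show $r_L=\rho$ via Lemma~\ref{lem:rank_function_cyclic_flats}---is exactly the architecture the paper uses for the free product (Theorems~\ref{thm:cyc_flats_axioms} and~\ref{thm:union_of_cycflats}), and it can be made to work. Your checks of \ref{z1}, \ref{z2}, \ref{z3} are correct, granting the decoupling $\rho(Z_1\oplus Z_2)=\rho_1(Z_1)+\rho_2(Z_2)$ (true, but it requires the monotonicity of nullity, not just the bare assertion that the minimand decouples). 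Two remarks on \ref{z0}: the componentwise formulas for $\cl_\rho$ and $\cyc_\rho$ do \emph{not} ``follow from the rank decoupling'', since $\cl_\rho(A_1\oplus A_2)$ is governed by ranks of spaces $A_1\oplus A_2+x$ with $x$ not of product form; but they are also unnecessary, because \ref{z0} only asks for \emph{some} lattice operations satisfying $Z+Z'\leq Z\vee Z'$ and $Z\wedge Z'\leq Z\cap Z'$, and the product operations $(Z_1\vee_{\mZ_1}Z_1')\oplus(Z_2\vee_{\mZ_2}Z_2')$ and $(Z_1\wedge_{\mZ_1}Z_1')\oplus(Z_2\wedge_{\mZ_2}Z_2')$ inherit these properties directly from \ref{z0} in the two factors.

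The genuine gap is the final step, which you correctly single out as the main obstacle but then leave as an expectation, and your sketch of it is off in two places. The ``main lemma'' you propose---that the optimal $Z$ in the formula for $r_L$ can be taken of product form---is vacuous, since every element of $\mZ(L)=\mZ_1\oplus\mZ_2$ is of product form by construction. Moreover, $r_L(X)$ does not collapse to $\rho_1(\pi_1(X))+\rho_2(\pi_2(X))$ minus an overlap correction: for non-product $X$ the direct-sum rank is $\dim(X)+\min_{Y\leq X}\{\rho_1(\pi_1(Y))+\rho_2(\pi_2(Y))-\dim(Y)\}$, and this minimum cannot be expressed through $\pi_1(X),\pi_2(X)$ alone. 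What actually closes the argument is a two-sided comparison of $r_L(X)=\dim(X)+\min\{\rho_1(Z_1)+\rho_2(Z_2)-\dim(X\cap(Z_1\oplus Z_2)):Z_i\in\mZ_i\}$ with $\rho(X)$. For $\rho(X)\leq r_L(X)$: given $Z_1,Z_2$, take $Y=X\cap(Z_1\oplus Z_2)$ as a candidate in the formula for $\rho(X)$; since $\pi_i(Y)\leq Z_i$, monotonicity gives $\rho_i(\pi_i(Y))\leq\rho_i(Z_i)$. For $r_L(X)\leq\rho(X)$: let $Y$ attain the minimum for $\rho(X)$, and for each $i$ choose $Z_i\in\mZ_i$ attaining $\rho_i(\pi_i(Y))=\rho_i(Z_i)+\dim(\pi_i(Y))-\dim(\pi_i(Y)\cap Z_i)$, which is Lemma~\ref{lem:rank_function_cyclic_flats} applied in the factor $M_i$, as you suggest; then the linear map $y\mapsto\bigl(\pi_1(y)+(\pi_1(Y)\cap Z_1),\,\pi_2(y)+(\pi_2(Y)\cap Z_2)\bigr)$ on $Y$ has kernel $Y\cap(Z_1\oplus Z_2)$, so $\dim(Y)-\dim(Y\cap(Z_1\oplus Z_2))\leq\sum_{i=1,2}\bigl(\dim(\pi_i(Y))-\dim(\pi_i(Y)\cap Z_i)\bigr)$, and combining this with $\dim(Y\cap(Z_1\oplus Z_2))\leq\dim(X\cap(Z_1\oplus Z_2))$ yields $\rho_1(Z_1)+\rho_2(Z_2)-\dim(X\cap(Z_1\oplus Z_2))\leq\rho_1(\pi_1(Y))+\rho_2(\pi_2(Y))-\dim(Y)$. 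Without this reconciliation (or an equivalent one), your proof is incomplete at precisely its decisive step.
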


It is well-known that the dual of a $q$-matroid is unique up to lattice-equivalence, regardless of what lattice anti-isomorphism is used.
We will define a lattice anti-isomorphism $\varphi$ that is convenient for our notation, but the reader could be aware that, with the use of additional lattice isomorphisms, the following results that use this specific lattice anti-isomorphism $\varphi$ can hold for arbitrary lattice anti-isomorphisms (but the statements would include slightly heavier notation). 

    \begin{notation}\label{not:perp}
        We fix $\langle \cdot , \cdot \rangle_{E_1}$ and $\langle \cdot , \cdot \rangle_{E_2}$ to be arbitrary non-degenerate inner products on $E_1$ and~$E_2$ respectively. We let 
        $\langle \cdot , \cdot \rangle=\langle \cdot , \cdot \rangle_{E_1} \oplus \langle \cdot , \cdot \rangle_{E_2}$ be the non-degenerate inner product on $E_1 \oplus E_2$
        defined by $\langle a , b  \rangle=\langle  \pi_1(a), \pi_1(b) \rangle_{E_1} + \langle \pi_2(a) , \pi_2(b) \rangle_{E_2}$ for all $a,b \in E_1 \oplus E_2$.
        For each $V \leq E_1\oplus E_2$, we let $V^\perp$ denote the orthogonal complement of $V$ with respect to $\langle \cdot , \cdot \rangle$.
        Then $(E_1\oplus\boldsymbol{0})^\perp =\boldsymbol{0}\oplus E_2$ and $(\boldsymbol{0}\oplus E_2)^\perp = E_1\oplus\boldsymbol{0}$. 
    \end{notation}

    \begin{definition}\label{def:dual_isom}
        We define the function $\text{rev}:\mathcal{L}\rightarrow\mathcal{L}$ by $\text{rev}(X)=\{(x_n,\dots,x_1):(x_1,\dots,x_n)\in X\}$, which is a lattice automorphism. Now we define the lattice anti-isomorphism $\varphi:\mathcal{L}\rightarrow\mathcal{L}$ by $\varphi(X)=\text{rev}(X^\perp)$. Then for $\mathcal{L}=\mathcal{L}(E_1\oplus E_2)$ and $\perp$ as in Notation~\ref{not:perp}, we have $\varphi(E_1\oplus\boldsymbol{0})=E_2\oplus\boldsymbol{0}$.
    \end{definition}

    For the remainder of this section, we will use the lattice anti-isomorphism $\varphi$ to map a $q$-matroid to its dual. The following duality result is a $q$-analogue of \cite[Proposition 4]{crapo2005free}, which the authors prove via the bases of a matroid. We prove it by examining the lattice of cyclic flats. 

    \begin{proposition}\label{prop:dual_free_prod}
        Let $M_1$ and $M_2$ be $q$-matroids on the spaces $E_1$ and $E_2$ respectively. We have that $(M_1\square M_2)^*\cong M_2^*\square M_1^*$.
    \end{proposition}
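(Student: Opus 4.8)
The plan is to prove $(M_1 \square M_2)^* \cong M_2^* \square M_1^*$ by comparing the lattices of cyclic flats of both sides, using Theorem~\ref{thm:union_of_cycflats} to describe each side explicitly and the duality result for cyclic flats. The key observation is that dualizing a $q$-matroid reverses its lattice of cyclic flats (with the orthogonal-complement anti-isomorphism), and that taking a free product ``stacks'' two such lattices. Since the anti-isomorphism $\varphi$ from Definition~\ref{def:dual_isom} sends $E_1 \oplus \boldsymbol{0}$ to $E_2 \oplus \boldsymbol{0}$, it interchanges the roles of the two factors, which matches exactly the swap $M_1, M_2 \mapsto M_2^*, M_1^*$ on the right-hand side.

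First I would recall the description of cyclic flats under duality: for a $q$-matroid $M=(E,r)$, the cyclic flats of $M^*$ are precisely $\{\varphi(Z) : Z \in \mZ(M)\}$, with rank values governed by the dual rank formula, and moreover $M^*$ has loops if and only if $M$ has coloops and vice versa. Applying Theorem~\ref{thm:union_of_cycflats}, I would write down $\mZ(M_1 \square M_2)$ as the stacked lattice $\iota_1(\mZ_1 \setminus \{E_1\}) \cup \{E_1 \oplus Z : Z \in \mZ_2\setminus\{\boldsymbol 0\}\}$ (with the extra element $\iota_1(E_1)$ exactly when $M_1$ is coloopless and $M_2$ loopless). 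Then I would apply $\varphi$ to the whole lattice: using $\varphi(E_1 \oplus \boldsymbol 0) = E_2 \oplus \boldsymbol 0$ and that $\varphi$ reverses order, the stacked lattice gets turned upside down, and the two blocks $\iota_1(\mZ_1)$ and $E_1 \oplus \mZ_2$ swap their vertical positions and individually become $\mZ(M_1^*)$ and $\mZ(M_2^*)$ respectively.

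Next I would verify that this image lattice, together with the dual rank function, is precisely the lattice of cyclic flats that Theorem~\ref{thm:union_of_cycflats} assigns to $M_2^* \square M_1^*$. The loop/coloop bookkeeping is what makes the case distinction in Theorem~\ref{thm:union_of_cycflats} consistent: $M_1 \square M_2$ falls in the ``extra element'' case iff $M_1$ is coloopless and $M_2$ loopless, which after dualizing becomes ``$M_2^*$ coloopless and $M_1^*$ loopless,'' exactly the condition for $M_2^* \square M_1^*$ to also have the extra element $\iota_1(E_2)$; and $\varphi$ carries $\iota_1(E_1)$ to that very element. One then checks the rank values agree: the rank of each cyclic flat $Z$ of $M_1\square M_2$ is determined by Lemma~\ref{lem:ext_cases_rank}, and the dual-rank formula $r^*(A) = \dim(A) - r(\boldsymbol 1) + r(A^\perp)$ transports these to the required ranks on the $M_2^* \square M_1^*$ side. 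Since a $q$-matroid is determined up to lattice-equivalence by its cyclic flats with rank values (by Lemma~\ref{lem:rank_function_cyclic_flats} and the cyclic flat axioms), the two $q$-matroids coincide, giving the isomorphism.

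The main obstacle I anticipate is the careful loop/coloop case analysis together with checking that the rank values match under $\varphi$, rather than merely the lattice structure. It is easy to see the two lattices are abstractly order-anti-isomorphic, but one must confirm that the concrete rank values assigned by Theorem~\ref{thm:union_of_cycflats} to $M_2^* \square M_1^*$ agree element-by-element with the pushforward under $\varphi$ of the dual rank values on $M_1 \square M_2$. The cleanest route is likely to invoke the known duality of cyclic flats for a single $q$-matroid as a black box (so that $\varphi$ applied to $\mZ(M_i)$ with dualized ranks yields $\mZ(M_i^*)$), and then simply observe that $\varphi$ commutes with the stacking construction up to the factor swap; the remaining verification is then the bookkeeping of which boundary element ($E_1$, $\boldsymbol 0$, or the adjoined $\iota_1(E_1)$) maps where, for which I would treat the four loop/coloop combinations explicitly but briefly.
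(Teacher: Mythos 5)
Your proposal is correct and takes essentially the same route as the paper: both proofs compare $\mathcal{Z}((M_1\square M_2)^*)$ with $\mathcal{Z}(M_2^*\square M_1^*)$ using the anti-isomorphism $\varphi$ of Definition~\ref{def:dual_isom} (with $\varphi(E_1\oplus\boldsymbol{0})=E_2\oplus\boldsymbol{0}$), the fact that duals of cyclic flats are cyclic flats of the dual, and the description of $\mathcal{Z}$ of a free product in Theorem~\ref{thm:union_of_cycflats}. The only cosmetic difference is that the paper identifies the two blocks of the stacked lattice at once via Lemma~\ref{lem:duality_properties} (i.e.\ $(M_1\square M_2)^*|\varphi(E_1\oplus\boldsymbol{0})\cong M_2^*$ and $(M_1\square M_2)^*/\varphi(E_1\oplus\boldsymbol{0})\cong M_1^*$), whereas you dualize each block directly and carry out the loop/coloop and rank-value bookkeeping explicitly, which is sound.
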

    \begin{proof}
        Recall from Definition~\ref{def:dual_isom} that $\varphi(E_1\oplus\boldsymbol{0})=E_2\oplus \boldsymbol{0}$. We will prove the statement of this proposition by considering the lattices of cyclic flats $\mathcal{Z}((M_1\square M_2)^*)$ and $\mathcal{Z}(M_2^*\square M_1^*)$. Recall that the dual of a cyclic flat of a $q$-matroid is a cyclic flat of the dual $q$-matroid.
        We have the following:
        \begin{align*}
            Z\in\mathcal{Z}((M_1\square M_2)^*)&\iff \varphi(Z)\in\mathcal{Z}(M_1\square M_2)\\
            &\iff \varphi(Z)\in\mathcal{Z}((M_1\square M_2)|(E_1\oplus\boldsymbol{0}))\text{ or }\varphi(Z)\in\mathcal{Z}((M_1\square M_2)/(E_1\oplus\boldsymbol{0}))\\
            &\iff Z\in\mathcal{Z}((M_1\square M_2)^*|\varphi(E_1\oplus\boldsymbol{0}))\text{ or }Z\in\mathcal{Z}((M_1\square M_2)^*/\varphi(E_1\oplus\boldsymbol{0}))\\
            &\iff Z\in\mathcal{Z}((M_1\square M_2)^*|(E_2\oplus\boldsymbol{0}))\text{ or }Z\in\mathcal{Z}((M_1\square M_2)^*/(E_2\oplus\boldsymbol{0})).
        \end{align*}
        Using Lemma~\ref{lem:duality_properties}, we have 
        $$(M_1\square M_2)^*|\varphi(E_1\oplus\boldsymbol{0})\cong((M_1\square M_2)/(E_1\oplus\boldsymbol{0}))^*$$
        and
        $$(M_1\square M_2)^*/\varphi(E_1\oplus\boldsymbol{0})\cong((M_1\square M_2)|(E_1\oplus\boldsymbol{0}))^*.$$
        This gives us that
        $$(M_1\square M_2)^*|\varphi(E_1\oplus\boldsymbol{0})\cong M_2^*\cong(M_2^*\square M_1^*)|\varphi(E_1\oplus\boldsymbol{0})\cong (M_2^*\square M_1^*)|(E_2\oplus\boldsymbol{0})$$
        and
        $$(M_1\square M_2)^*/\varphi(E_1\oplus\boldsymbol{0})\cong M_1^*\cong(M_2^*\square M_1^*)/\varphi(E_1\oplus\boldsymbol{0})\cong(M_2^*\square M_1^*)/(E_2\oplus\boldsymbol{0}).$$
        We obtain the result by applying Theorem~\ref{thm:union_of_cycflats} to $M_2^*\square M_1^*$.
    \end{proof}

  \begin{proposition}\label{prop:assoc_dir_prod}
        The free product on $q$-matroids is associative.
    \end{proposition}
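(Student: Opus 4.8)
The plan is to prove the equivalent binary statement $(M_1\square M_2)\square M_3\cong M_1\square(M_2\square M_3)$, identifying $(E_1\oplus E_2)\oplus E_3$ with $E_1\oplus(E_2\oplus E_3)$ as $E:=E_1\oplus E_2\oplus E_3$; the general $\ell$-fold associativity then follows by induction. Since a $q$-matroid is determined by its lattice of cyclic flats together with the rank values carried on it (Lemma~\ref{lem:rank_function_cyclic_flats}), it suffices to show that the two triple products have \emph{the same} cyclic flats in $\mL(E)$ and the same rank on each of them. I would carry this out entirely through Theorem~\ref{thm:union_of_cycflats}.

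First I would record how loops and coloops behave under $\square$, which is what one needs in order to apply Theorem~\ref{thm:union_of_cycflats} to the two outer products. Using the identities $\boldsymbol 0_{\mZ(M_1\square M_2)}=\boldsymbol 0_{\mZ_1}\oplus\boldsymbol 0$ and $\boldsymbol 1_{\mZ(M_1\square M_2)}=E_1\oplus\boldsymbol 1_{\mZ_2}$ noted in the proof of Theorem~\ref{thm:cyc_flats_axioms}, together with the fact that a $q$-matroid is loopless (resp.\ coloopless) exactly when its bottom (resp.\ top) cyclic flat is $\boldsymbol 0$ (resp.\ the whole space), I would deduce that $M_1\square M_2$ is loopless if and only if $M_1$ is, and coloopless if and only if $M_2$ is. Next I would restate Theorem~\ref{thm:union_of_cycflats} in a single uniform form: writing $B_{12}=\{Z_1\oplus\boldsymbol 0:Z_1\in\mZ_1\setminus\{E_1\}\}$ and $T_{12}=\{E_1\oplus Z_2:Z_2\in\mZ_2\setminus\{\boldsymbol 0\}\}$ for the two constituents of $\mZ$ in \eqref{eq:cyc_flat}, one has $\mZ(M_1\square M_2)=B_{12}\cup T_{12}\cup S_{12}$, where $S_{12}=\{E_1\oplus\boldsymbol 0\}$ when $M_1$ is coloopless and $M_2$ is loopless, and $S_{12}=\emptyset$ otherwise. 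The value of this reformulation is that the ``body'' $B_{12}\cup T_{12}$ and the optional ``junction'' $S_{12}$ are governed by conditions that only refer to the two adjacent factors.

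Then I would substitute and expand. Setting $N=M_1\square M_2$ and $N'=M_2\square M_3$, and applying the uniform formula once for the inner and once for the outer product (using the loop/coloop dictionary to evaluate the junction conditions for $N\square M_3$ and $M_1\square N'$), a direct calculation shows that both $\mZ((M_1\square M_2)\square M_3)$ and $\mZ(M_1\square(M_2\square M_3))$ equal
\[
\begin{aligned}
&\{Z_1\oplus\boldsymbol 0\oplus\boldsymbol 0 : Z_1\in\mZ_1\setminus\{E_1\}\}\ \cup\ \{E_1\oplus Z_2\oplus\boldsymbol 0 : Z_2\in\mZ_2\setminus\{\boldsymbol 0,E_2\}\}\\
&\qquad\cup\ \{E_1\oplus E_2\oplus Z_3 : Z_3\in\mZ_3\setminus\{\boldsymbol 0\}\},
\end{aligned}
\]
to which one adjoins the junction $E_1\oplus\boldsymbol 0\oplus\boldsymbol 0$ precisely when $M_1$ is coloopless and $M_2$ is loopless, and the junction $E_1\oplus E_2\oplus\boldsymbol 0$ precisely when $M_2$ is coloopless and $M_3$ is loopless. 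For the rank values I would apply Lemma~\ref{lem:ext_cases_rank} twice on each side (inner then outer product), obtaining on both products the values $r_1(Z_1)$, $r_1(E_1)+r_2(Z_2)$, and $r_1(E_1)+r_2(E_2)+r_3(Z_3)$ on the three families, and $r_1(E_1)$, $r_1(E_1)+r_2(E_2)$ on the two junctions. Equality of cyclic flats and of the carried ranks then yields $(M_1\square M_2)\square M_3=M_1\square(M_2\square M_3)$ on $E$ by Lemma~\ref{lem:rank_function_cyclic_flats}, proving the claim.

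I expect the main obstacle to be the bookkeeping of the junction elements across the four loop/coloop configurations: one must verify that $E_1\oplus\boldsymbol 0\oplus\boldsymbol 0$ is a cyclic flat of the left association exactly when it is one of the right association, and likewise for $E_1\oplus E_2\oplus\boldsymbol 0$. This is the only place where the two bracketings could conceivably differ, and it is precisely where the loop/coloop dictionary for $\square$ is needed. Isolating the uniform form of Theorem~\ref{thm:union_of_cycflats} in advance is what prevents this verification from fragmenting into many separate subcases, since it reduces the check to the observation that each junction condition depends only on the pair of factors meeting at that junction and is therefore unaffected by reassociation.
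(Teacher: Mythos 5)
Your proposal is correct and takes essentially the same approach as the paper's proof: both sides determine $\mathcal{Z}((M_1\square M_2)\square M_3)$ and $\mathcal{Z}(M_1\square(M_2\square M_3))$ via Theorem~\ref{thm:union_of_cycflats}, match rank values on cyclic flats using Lemma~\ref{lem:ext_cases_rank}, and conclude by the fact that a $q$-matroid is determined by its cyclic flats with their ranks (Lemma~\ref{lem:rank_function_cyclic_flats}). The only difference is presentational: the paper encodes membership through interval/projection conditions (1)--(6) and (3') and identifies the three minors directly, whereas you write out the lattice explicitly and track the optional ``junction'' flats $E_1\oplus\boldsymbol{0}\oplus\boldsymbol{0}$ and $E_1\oplus E_2\oplus\boldsymbol{0}$ via a loop/coloop dictionary --- a bookkeeping the paper handles implicitly, and which your version makes somewhat more explicit.
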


    \begin{proof}
        For $i=1,2,3$, let $M_i$ be a $q$-matroid with ground space $E_i$. Consider the projection $\pi_i : E_1\oplus E_2\oplus E_3\longrightarrow E_i$ the projection on the corresponding coordinates.
        By Theorem~\ref{thm:union_of_cycflats} we have that $Z\in\mathcal{Z}((M_1\square M_2)\square M_3)$ if and only if $Z\in[\boldsymbol{0},E_1\oplus E_2\oplus\boldsymbol{0}]\cup[E_1\oplus E_2\oplus\boldsymbol{0},E_1\oplus E_2\oplus E_3]$ and 
        \begin{itemize}
         \setlength\itemsep{0.5em}
        \item[(1)] $\pi_{1}(Z)\oplus \pi_2(Z)\in\mathcal{Z}(M_1\square M_2)$ if $Z<E_1\oplus E_2\oplus\boldsymbol{0}$,
            \item[(2)] $\pi_{3}(Z)\in\mathcal{Z}(M_3)$ if $Z>E_1\oplus E_2\oplus\boldsymbol{0}$,
            \item[(3)] $\pi_{1}(Z)\oplus \pi_2(Z)\in\mathcal{Z}(M_1\square M_2)$ and $\pi_{3}(Z)\in\mathcal{Z}(M_3)$ if $Z=E_1\oplus E_2\oplus\boldsymbol{0}$.
        \end{itemize}
    Now consider that $Z'\in\mathcal{Z}(M_1\square M_2)$ if and only if $Z'\in[\boldsymbol{0},E_1\oplus\boldsymbol{0}]\cup[E_1\oplus\boldsymbol{0},E_1\oplus E_2]$ and
    \begin{itemize}
     \setlength\itemsep{0.5em}
        \item[(4)] $\pi_{1}(Z')\in\mathcal{Z}(M_1)$ if $Z'<E_1\oplus\boldsymbol{0}$,
        \item[(5)] $\pi_{2}(Z')\in\mathcal{Z}(M_2)$ if $Z'>E_1\oplus\boldsymbol{0}$,
        \item[(6)] $\pi_{1}(Z')\in\mathcal{Z}(M_1)$ and $\pi_{2}(Z')\in\mathcal{Z}(M_2)$ if $Z'=E_1\oplus\boldsymbol{0}$.
    \end{itemize}
    Note that $\pi_{i}(\pi_{1}(Z)\oplus \pi_2(Z))=\pi_{i}(Z)$ for $i=1,2$. Observe that (4), (5) and (6) imply (1). Consider the following property, which is implied by (3) and (6).
    \begin{itemize}
        \item[(3')] $\pi_{2}(Z)\in\mathcal{Z}(M_2)$ and $\pi_{3}(Z)\in\mathcal{Z}(M_3)$ if $Z=E_1\oplus E_2\oplus\boldsymbol{0}$.
    \end{itemize}
    We see that (3') and (6) imply (3).
    We therefore have that $Z\in\mathcal{Z}((M_1\square M_2)\square M_3)$ if and only if 
    $$Z\in[\boldsymbol{0},E_1\oplus\boldsymbol{0}\oplus\boldsymbol{0}]\cup[E_1\oplus\boldsymbol{0}\oplus\boldsymbol{0},E_1\oplus E_2\oplus\boldsymbol{0}]\cup[E_1\oplus E_2\oplus\boldsymbol{0},E_1\oplus E_2\oplus E_3]$$
    and one of the above properties (2),(3'),(4),(5),(6) is satisfied.

    With a similar line of reasoning we can show the same properties for $Z\in\mathcal{Z}(M_1\square(M_2\square M_3))$ which shows that  $\mathcal{Z}((M_1\square M_2)\square M_3) = \mathcal{Z}(M_1\square(M_2\square M_3))$.

    For any $[X,Y]\subseteq[\boldsymbol{0},E_1\oplus E_2\oplus E_3]$, the rank function of the minor $((M_1\square M_2)\square M_3)[X,Y]$ is $r_{(M_1\square M_2)\square M_3}(W)-r_{(M_1\square M_2)\square M_3}(X)$ for any $W\in[X,Y]$. Similarly, the rank function of the minor $(M_1\square (M_2\square M_3))[X,Y]$ is $r_{M_1\square (M_2\square M_3)}(W)-r_{M_1\square (M_2\square M_3)}(X)$ for any $W\in[X,Y]$. Using the definition of the free product and Lemma \ref{lem:ext_cases_rank}, we can thus deduce that
    \begin{align*}
        ((M_1\square M_2)\square M_3)[\boldsymbol{0},E_1\oplus\boldsymbol{0}\oplus\boldsymbol{0}]&\cong M_1\cong(M_1\square (M_2\square M_3))[\boldsymbol{0},E_1\oplus \boldsymbol{0}\oplus\boldsymbol{0}],\\
        ((M_1\square M_2)\square M_3)[E_1\oplus\boldsymbol{0}\oplus\boldsymbol{0},E_1\oplus E_2\oplus\boldsymbol{0}]&\cong M_2\cong (M_1\square (M_2\square M_3))[E_1\oplus\boldsymbol{0}\oplus\boldsymbol{0},E_1\oplus E_2\oplus\boldsymbol{0}],\\
        ((M_1\square M_2)\square M_3)[E_1\oplus E_2\oplus\boldsymbol{0},E_1\oplus E_2\oplus E_3]&\cong M_3\cong(M_1\square (M_2\square M_3))[E_1\oplus E_2\oplus\boldsymbol{0},E_1\oplus E_2\oplus E_3].
    \end{align*}
    We have already shown that all cyclic flats in $\mathcal{Z}((M_1\square M_2)\square M_3) = \mathcal{Z}(M_1\square(M_2\square M_3))$ lie in these intervals. Therefore, $r_{(M_1\square M_2)\square M_3}(Z)=r_{M_1\square(M_2\square M_3)}(Z)$ for all $Z\in\mathcal{Z}((M_1\square M_2)\square M_3) = \mathcal{Z}(M_1\square(M_2\square M_3))$. Since every $q$-matroid is uniquely determined by its lattice of cyclic flats along with their rank values, this implies that $(M_1\square M_2)\square M_3\cong M_1\square (M_2\square M_3)$.
    \end{proof}

The free product and the direct sum of $q$-matroids $M_1$ and $M_2$ on ground spaces $E_1$ and $E_2$ are related by the following result.
\begin{proposition}
Let $M_1=(E_1,r_1)$ and $M_2=(E_2,r_2)$. Assume $r_1(M_1)=0$ or $\nu_2(M_2)=0$, then $M_1\square M_2=M_1\oplus M_2$.
\end{proposition}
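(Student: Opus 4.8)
The plan is to prove the two cases separately by showing that the rank function $r_\square$ of $M_1\square M_2$ given in Theorem~\ref{thm:free_prod_rank_function} coincides with the rank function $r_\oplus$ of $M_1\oplus M_2$ given in its definition; both $q$-matroids have the same ground space $E_1\oplus E_2$, so comparing rank functions suffices. Throughout I would use Lemma~\ref{lem:rank_nullity} in the form $\dim\pi_2(X)=\dim X-\dim(X\cap\iota_1(E_1))$, together with the standard consequence of (R2) and (R3) that nullity is monotone non-decreasing, i.e. $r(A')-r(A)\leq\dim A'-\dim A$ whenever $A\leq A'$ (which follows since $r(C+x)\leq r(C)+1$ for one-dimensional $x$, by submodularity applied to $C$ and $x$). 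In particular $r(\boldsymbol{0})=0$ forces $r\equiv 0$ once $r(E)=0$, and $\nu(\boldsymbol{0})=0$ forces $\nu\equiv 0$ once $\nu(E)=0$.

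\textbf{Case $r_1(M_1)=0$.} By (R1)--(R2) we get $r_1\equiv 0$ on $\mL(E_1)$, hence $\lambda_1\equiv 0$. Substituting into Theorem~\ref{thm:free_prod_rank_function} and using $\min\{0,\nu_2(\cdot)\}=0$ yields $r_\square(X)=r_2(\pi_2(X))$. For the direct sum, $r_1'(X)=r_1(\pi_1(X))=0$, so $r_\oplus(V)=\dim V+\min_{X\leq V}\big(r_2(\pi_2(X))-\dim X\big)$. I would then show the minimum is attained at $X=V$: for $X\leq V$ we have $\pi_2(X)\leq\pi_2(V)$, so monotonicity of nullity for $M_2$ gives $r_2(\pi_2(V))-r_2(\pi_2(X))\leq\dim\pi_2(V)-\dim\pi_2(X)$, and by Lemma~\ref{lem:rank_nullity} the latter equals $(\dim V-\dim X)-(\dim(V\cap\iota_1(E_1))-\dim(X\cap\iota_1(E_1)))\leq\dim V-\dim X$ since $X\cap\iota_1(E_1)\leq V\cap\iota_1(E_1)$. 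Hence $r_2(\pi_2(X))-\dim X\geq r_2(\pi_2(V))-\dim V$, and therefore $r_\oplus(V)=r_2(\pi_2(V))=r_\square(V)$.

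\textbf{Case $\nu_2(M_2)=0$.} By monotonicity of nullity, $\nu_2\equiv 0$, i.e. $r_2(B)=\dim B$ for all $B\leq E_2$. Then Theorem~\ref{thm:free_prod_rank_function} gives $r_\square(X)=r_1(\pi_1(X\cap\iota_1(E_1)))+\dim\pi_2(X)$. For the direct sum, $r_2'(X)=\dim\pi_2(X)$, and using Lemma~\ref{lem:rank_nullity} to write $\dim\pi_2(X)-\dim X=-\dim(X\cap\iota_1(E_1))$, we obtain $r_\oplus(V)=\dim V+\min_{X\leq V}\big(r_1(\pi_1(X))-\dim(X\cap\iota_1(E_1))\big)$. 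I would then prove this minimum equals $r_1(\pi_1(V\cap\iota_1(E_1)))-\dim(V\cap\iota_1(E_1))$: the choice $X=V\cap\iota_1(E_1)$ realises it, while for general $X\leq V$, setting $A=\pi_1(V\cap\iota_1(E_1))$ and $B=\pi_1(X\cap\iota_1(E_1))$ one has $B\leq A\cap\pi_1(X)$, so the rank-increment bound gives $r_1(A)-r_1(\pi_1(X))\leq r_1(\pi_1(X)+A)-r_1(\pi_1(X))\leq\dim A-\dim(A\cap\pi_1(X))\leq\dim A-\dim B$, whence $r_1(\pi_1(X))-\dim B\geq r_1(A)-\dim A$. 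Substituting back and applying Lemma~\ref{lem:rank_nullity} once more yields $r_\oplus(V)=r_1(\pi_1(V\cap\iota_1(E_1)))+\dim\pi_2(V)=r_\square(V)$.

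The main obstacle in each case is the minimisation in the direct-sum formula, namely identifying the minimiser and establishing the lower bound; in Case~2 this rests on the submodularity estimate bounding $r_1(\pi_1(X))$ from below by $r_1(A)-\dim(A/B)$, while all remaining manipulations are direct substitutions via Lemma~\ref{lem:rank_nullity}. I would also note that the two hypotheses are interchanged by duality: since $r_1(M_1)=0$ is equivalent to $\nu(M_1^\ast)=0$, one could alternatively deduce Case~2 from Case~1 by applying it to $M_2^\ast\square M_1^\ast$ and dualising via Proposition~\ref{prop:dual_free_prod}, provided a duality formula for the direct sum is available.
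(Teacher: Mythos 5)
Your proof is correct, but it takes a genuinely different route from the paper's. The paper argues entirely through cyclic flats: if $r_1(M_1)=0$ then $\mZ(M_1)=\{E_1\}$, and if $\nu_2(M_2)=0$ then $\mZ(M_2)=\{\boldsymbol{0}\}$; in either case Theorem~\ref{thm:union_of_cycflats} shows that $\mZ(M_1\square M_2)$ equals $\{Z_1\oplus Z_2 : Z_1\in\mZ(M_1),\, Z_2\in\mZ(M_2)\}$, which is $\mZ(M_1\oplus M_2)$ by Lemma~\ref{lem:sum_cyclic_flats}, and since a $q$-matroid is determined by its lattice of cyclic flats together with their ranks, the equality follows in three lines. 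You instead compare the two rank functions pointwise on $\mL(E_1\oplus E_2)$: the formula of Theorem~\ref{thm:free_prod_rank_function} collapses under either hypothesis ($\lambda_1\equiv 0$, resp.\ $\nu_2\equiv 0$), and the real work is then to solve the minimisation in the definition of the direct sum, which you do correctly --- in Case~1 the minimiser is $X=V$, by monotonicity of nullity combined with Lemma~\ref{lem:rank_nullity}, and in Case~2 it is $X=V\cap\iota_1(E_1)$, by the submodularity estimate $r_1(A)-r_1(\pi_1(X))\leq \dim A-\dim(A\cap \pi_1(X))$ together with $\pi_1(X\cap\iota_1(E_1))\leq A\cap\pi_1(X)$. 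What your approach buys is self-containment: it needs neither the cyclic-flat description of the free product (Theorem~\ref{thm:union_of_cycflats}) nor the result on cyclic flats of direct sums (Lemma~\ref{lem:sum_cyclic_flats}, quoted from the literature), only the rank formula and the rank axioms; it also identifies exactly where the direct-sum minimum is attained, which is mildly informative in its own right. What it costs is length: the paper's argument is essentially immediate once the cyclic-flat machinery of Section~\ref{sec:cryptomorphisms} is in place. Your closing remark on deducing Case~2 from Case~1 by dualising via Proposition~\ref{prop:dual_free_prod} is also sound, and appropriately flagged as conditional on a duality formula for the direct sum, which the paper does not state.
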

\begin{proof}
If $r_1(M_1)=0$, then $\mZ_1=\{E_1\}$ and hence the lattice of cyclic flats of $M_1\square M_2$ is equal to $$\{E_1\oplus Z:Z\in\mathcal{Z}_2\}=\{Z_1\oplus Z_2:Z_1\in\mZ_1,Z_2\in\mathcal{Z}_2\}.$$
Similarly, if $\nu_2(M_2)=0$, then $\mZ_2=\{\boldsymbol{0}\}$, hence the lattice of cyclic flats of $M_1\square M_2$ is equal to $$\{Z\oplus \boldsymbol{0}:Z\in\mZ_1\}=\{Z_1\oplus Z_2:Z_1\in\mZ_1,Z_2\in\mathcal{Z}_2\}.$$
By Lemma \ref{lem:sum_cyclic_flats}, the result follows.
\end{proof}

\subsection{The weak ordering}\label{subsec:weak_order}

In \cite{crapo2005unique} the free product of matroids, as well as the direct sum of matroids, are discussed relative to weak maps of matroids. In the same paper, it is shown that
the free product of two matroids $M$ and $N$ on ground sets $S$ and $T$ is the maximal element in the \emph{weak ordering} of the  class of matroids on $S \cup T$ whose restrictions to $S$ and complementary contraction by $S$ are $M$ and $N$, respectively. Further, the direct sum of $M$ and $N$ is the minimal element in this weak ordering.
In this section we extend these results to $q$-matroids, as well as observe a difference that occurs in the $q$-analogue. Namely, that the direct sum is not always minimal in the weak ordering.

The definition of \emph{weak map} for matroids can be found in \cite[Chapter 9]{white_matroids}. In the following definition we include the $q$-analogue, as well as a definition of \emph{weak isomorphism}.

\begin{definition}
Let $M_1=(E_1,r_1)$ and $M_2=(E_2,r_2)$ be $q$-matroids, let $\mL_1=\mL(E_1)$ and $\mL_2=\mL(E_2)$ and let $\tau :\mathcal{L}_1\rightarrow\mathcal{L}_2$ be a function. We say that $\tau$ is a \textbf{weak map} from $M_1$ to $M_2$ if $r_1(X)\geq r_2(\tau(X))$ for every $X\in\mathcal{L}_1$. If $\tau$ is also a lattice isomorphism, then we call $\tau$ a \textbf{weak isomorphism} from $M_1$ to $M_2$.
    \end{definition}

    The following definition is the ($q$-analogue) of a \emph{weak order} on matroids; see \cite{crapo2005unique}.

    \begin{definition}
        Let $M_1=(E_1,r_1)$ and $M_2=(E_2,r_2)$ be $q$-matroids, let $\mL_1=\mL(E_1)$ and $\mL_2=\mL(E_2)$. We say that $M_2$ is below $M_2$ in the \textbf{weak ordering} and we write $M_2\preceq M_1$ if there exists a weak isomorphism from $M_1$ to $M_2$.
    \end{definition}

    It is clear that the weak order of $q$-matroids is a partial order on the set of $q$-matroids supported on the same subspace lattice (up to isomorphism). For the remainder of this section, we fix a positive integer $k$ and a collection of $q$-matroids $M_1,\dots,M_k$.

    \begin{definition}\label{def:poset}
        We denote by $\mathcal{M}_q(M_1,\dots,M_k)$ the partially ordered set of (isomorphism classes of) $q$-matroids $M$ on a vector space $E$ for which there exists a chain $\boldsymbol{0}=V_0<V_1<\dots<V_k=E$ such that $M[V_{i-1},V_i]\cong M_i$ for all $i\in\{1,\dots,k\}$, and for which the order on $\mathcal{M}_q(M_1,\dots,M_k)$ is the weak order $\preceq$. For the partially ordered set of matroids with these same conditions, we denote the corresponding poset by $\mathcal{M}(M_1,\dots,M_k)$ (in which case $M_1,\dots,M_k$ are assumed to be matroids).
    \end{definition}

    Clearly, $M_1\oplus \cdots\oplus M_k$ and $M_1\square \cdots \square M_k$ belong to $\mathcal{M}_q(M_1,\ldots,M_k)$.
    
    In Theorem~\ref{thm:free_prod_is_maximal} we generalise a result in \cite[section 4]{crapo2005unique} by showing that $M_1\square\dots\square M_k$ is maximal in $\mathcal{M}_q(M_1,\dots,M_k)$. 
    Our proof is different from the matroid case and heavily relies on the properties of cyclic flats. We first prove a technical lemma.

    \begin{lemma}\label{lem:cyccl_ineq}
        Let $M=(E,r)$ be a $q$-matroid and let $X,Y\in\mathcal{L}(E)$. We have
        $$r(Y)+\text{dim}((X+Y)/Y)\geq r(\text{cyc}(\text{cl}(Y)))+\text{dim}((X+\text{cyc}(\text{cl}(Y)))/\text{cyc}(\text{cl}(Y))).$$
    \end{lemma}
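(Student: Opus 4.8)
The plan is to set $W := \cyc(\cl(Y))$ and prove the inequality
\[
r(Y)+\dim((X+Y)/Y)\ \geq\ r(W)+\dim((X+W)/W)
\]
by analysing the passage from $Y$ to $\cl(Y)$ and then from $\cl(Y)$ to $\cyc(\cl(Y))$ as two separate steps, tracking how each of the two summands $r(\,\cdot\,)$ and $\dim((X+\,\cdot\,)/\,\cdot\,)$ changes. The key structural facts are: $Y\le\cl(Y)$ with $r(\cl(Y))=r(Y)$ (the closure preserves rank), and $W=\cyc(\cl(Y))\le\cl(Y)$ with the rank drop governed by Lemma \ref{lem_cyc_closure}, which gives $r(\cl(Y))-r(W)=\dim(\cl(Y))-\dim(W)$. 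Note also that $W$ is a cyclic flat, which is the geometrically natural target here. Rewriting $\dim((X+Z)/Z)=\dim(X+Z)-\dim(Z)$ for each relevant $Z$ will convert everything into dimension and rank bookkeeping.

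First I would handle the step from $Y$ to $\cl(Y)$. Since $r(\cl(Y))=r(Y)$, the claim $r(Y)+\dim((X+Y)/Y)\ge r(\cl(Y))+\dim((X+\cl(Y))/\cl(Y))$ reduces to $\dim((X+Y)/Y)\ge\dim((X+\cl(Y))/\cl(Y))$. This is a purely lattice-theoretic monotonicity statement: enlarging the subtracted space from $Y$ to the larger space $\cl(Y)$ cannot increase the codimension of $X+Z$ over $Z$; concretely $\dim((X+Z)/Z)=\dim(X)-\dim(X\cap Z)$, and $X\cap Y\le X\cap\cl(Y)$ gives the inequality directly. So this step is essentially free once I expand the quotient dimensions.

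The second, and harder, step is from $\cl(Y)$ to $W=\cyc(\cl(Y))$, where the rank genuinely drops. Writing $F:=\cl(Y)$, I need
\[
r(F)+\dim(X+F)-\dim(F)\ \geq\ r(W)+\dim(X+W)-\dim(W).
\]
Using Lemma \ref{lem_cyc_closure}, $r(F)-r(W)=\dim(F)-\dim(W)$, so the rank difference exactly cancels the $-\dim(F)+\dim(W)$ contribution, and the inequality collapses to $\dim(X+F)\ge\dim(X+W)$, which holds trivially since $W\le F$. The main thing to be careful about is the direction of the two competing effects — here, unlike the first step, passing to the \emph{smaller} space $W$ is what is forced, so I must confirm that the rank loss is precisely compensated by the dimension identity rather than merely bounded by it; Lemma \ref{lem_cyc_closure} supplies exactly this equality, which is why cyclic closure (and not some other operator) makes the argument go through. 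Combining the two steps by transitivity yields the claim. The only genuine obstacle is organising the algebra so that the rank identity from Lemma \ref{lem_cyc_closure} is invoked at the right place; there is no deeper difficulty.
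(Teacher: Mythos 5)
Your proposal is correct and follows essentially the same route as the paper's proof: both split the argument into the passage from $Y$ to $\cl(Y)$ (where the rank is preserved and the quotient dimension can only decrease) and from $\cl(Y)$ to $\cyc(\cl(Y))$ (where Lemma \ref{lem_cyc_closure} makes the rank drop exactly cancel the dimension drop, reducing everything to $\dim(X+\cl(Y))\geq\dim(X+\cyc(\cl(Y)))$). The only cosmetic difference is that you justify the quotient-dimension monotonicity via the identity $\dim((X+Z)/Z)=\dim(X)-\dim(X\cap Z)$, while the paper states the corresponding two-sided inequality directly.
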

    \begin{proof}
    From the definitions of cl and cyc and Lemma \ref{lem_cyc_closure}, for any $W\in\mathcal{L}$ we have that $r(W)=r(\text{cl}(W))$ and $r(W)=r(\text{cyc}(W))+\text{dim}(W)-\text{dim}(\text{cyc}(W))$. Let $U,V\in\mathcal{L}(E)$ and $U\leq V$. Then, 
        $$\text{dim}((W+V)/V)\leq\text{dim}((W+U)/U)\leq\text{dim}((W+V)/V)+\text{dim}(V)-\text{dim}(U).$$
        We therefore get the following:
        {\small{\begin{align*}
            r(Y)+\text{dim}((X+Y)/Y)&=r(\text{cl}(Y))+\text{dim}((X+Y)/Y)\\
            &\geq r(\text{cl}(Y))+\text{dim}((X+\text{cl}(Y))/\text{cl}(Y))\\
            &=r(\text{cyc}(\text{cl}(Y)))+\text{dim}(\text{cl}(Y))-\text{dim}(\text{cyc}(\text{cl}(Y))+\text{dim}((X+\text{cl}(Y))/\text{cl}(Y))\\
            &\geq r(\text{cyc}(\text{cl}(Y)))+\text{dim}((X+\text{cyc}(\text{cl}(Y)))/\text{cyc}(\text{cl}(Y))).\qedhere
        \end{align*}}}
    \end{proof}

    The following, Theorem~\ref{thm:free_prod_is_maximal}, is the main result of this section, which is the $q$-analogue of \cite[Proposition 4.7]{crapo2005unique}. Once again, we leverage the structure of the cyclic flats of the free product to obtain a straightforward proof.
    \begin{theorem}\label{thm:free_prod_is_maximal}
        Let $M=(E,r)$ be a $q$-matroid and $\boldsymbol{0}=V_0<V_1<\dots<V_k=E$ be a chain. Let $M_i=M[V_{i-1},V_i]$. The identity map on $E$ is a weak map from $M_1\square\dots\square M_k$ to $M$.
    \end{theorem}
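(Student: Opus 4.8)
The goal is to show that $r_P(X)\geq r(X)$ for every $X\in\mathcal{L}(E)$, where $r_P$ denotes the rank function of $P:=M_1\square\cdots\square M_k$ and $r$ that of $M$; by the definition of a weak map this is exactly the assertion that the identity is a weak map from $P$ to $M$. The plan is to run everything through the cyclic-flats rank formula of Lemma~\ref{lem:rank_function_cyclic_flats}, using the just-proved Lemma~\ref{lem:cyccl_ineq} as the crucial ingredient that lets us pass from arbitrary subspaces back to cyclic flats of $M$.

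The first step I would carry out is the key structural fact: \emph{every cyclic flat $W$ of $P$ satisfies $r_P(W)=r(W)$.} Identifying $E=E_1\oplus\cdots\oplus E_k$ so that $V_i=E_1\oplus\cdots\oplus E_i$, Theorem~\ref{thm:union_of_cycflats} together with associativity (Proposition~\ref{prop:assoc_dir_prod}) shows that each $W\in\mathcal{Z}(P)$ lies in one of the intervals $[V_{i-1},V_i]$. On such an interval, iterating Lemma~\ref{lem:ext_cases_rank} (and grouping the product via associativity) gives $r_P(V_{i-1})=\sum_{j<i}r(M_j)=r(V_{i-1})$, since $r(M_j)=r(V_j)-r(V_{j-1})$ telescopes. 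Moreover $P[V_{i-1},V_i]\cong M_i\cong M[V_{i-1},V_i]$, so the rank shifts agree: $r_P(W)-r_P(V_{i-1})=r(W)-r(V_{i-1})$. Combining these two identities yields $r_P(W)=r(W)$.

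With this in hand the conclusion follows quickly. By Lemma~\ref{lem:rank_function_cyclic_flats} applied to $P$ we may write $r_P(X)=\min\{r_P(W)+\dim((X+W)/W):W\in\mathcal{Z}(P)\}$, and replacing $r_P(W)$ by $r(W)$ on every cyclic flat of $P$ gives $r_P(X)=r(W^\ast)+\dim((X+W^\ast)/W^\ast)$ for the minimizing $W^\ast$. Writing $Z^\ast:=\cyc(\cl(W^\ast))$, which is a cyclic flat of $M$, Lemma~\ref{lem:cyccl_ineq} with $Y=W^\ast$ gives $r(W^\ast)+\dim((X+W^\ast)/W^\ast)\geq r(Z^\ast)+\dim((X+Z^\ast)/Z^\ast)$, and the right-hand side is at least $r(X)$ by Lemma~\ref{lem:rank_function_cyclic_flats} applied to $M$ (it is one term of the defining minimum). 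Hence $r_P(X)\geq r(X)$, as required.

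I expect the main obstacle to be the first step: rigorously identifying the cyclic flats of the $k$-fold free product and verifying $r_P=r$ on them. This requires careful bookkeeping with the loop/coloop case distinctions in Theorem~\ref{thm:union_of_cycflats}, an inductive argument (via associativity) that every cyclic flat of $P$ lands inside some $[V_{i-1},V_i]$, and the telescoping of $r_P(V_{i-1})$. As a sanity check one can verify the case $k=2$ directly from Theorem~\ref{thm:free_prod_rank_function}, where a short computation gives $r_P(X)=\min\{r(X+V_1),\,r(X\cap V_1)+\dim X-\dim(X\cap V_1)\}$; both terms dominate $r(X)$, the first by monotonicity of $r$ and the second by monotonicity of the nullity function, which recovers the statement without invoking the cyclic-flat machinery.
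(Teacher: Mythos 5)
Your proof is correct and follows essentially the same route as the paper's: identify the cyclic flats of the free product as lying in the intervals $[V_{i-1},V_i]$ via Theorem~\ref{thm:union_of_cycflats} and associativity, show the two rank functions agree on those cyclic flats, and then combine the cyclic-flat rank formula (Lemma~\ref{lem:rank_function_cyclic_flats}) with Lemma~\ref{lem:cyccl_ineq} applied to the minimizing cyclic flat. Your telescoping argument for $r_P(V_{i-1})=r(V_{i-1})$ actually spells out a detail the paper leaves implicit, but the structure of the argument is the same.
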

    \begin{proof}
        Let $N:=M_1\square\dots\square M_k$ and let $S=\{V_i:i\in\{0,\dots,k\},V_i\notin\mZ_1\}$. By Theorem~\ref{thm:union_of_cycflats} and the associativity of the free product given in Proposition~\ref{prop:assoc_dir_prod}, we have 
        $$\mathcal{Z}(N)=\left(\bigcup_{i=0}^k\mathcal{Z}(M_i)\right)\backslash S.$$
        Let $r_N$ be the rank function of $N$. 
        Since $N[V_{i-1},V_i]\cong M_i$ for all $i\in \{1,\dots,k\}$ (by the definition of the free product as well as its associativity), we have that $r(X)=r_N(X)$ for $X\in[V_{i-1},V_i]$ for $i\in  \{1,\dots,k\}$. By Lemma \ref{lem:rank_function_cyclic_flats}, for any $X\in\mathcal{L}$ we have that $r_N(X)=\min\{r_N(Z)+\text{dim}((X+Z)/Z):Z\in\mathcal{Z}(N)\}$. Let $Z'\in\mathcal{Z}(N)$ be such that $r_N(X)=r_N(Z')+\text{dim}((X+Z')/Z')$. By Theorem~\ref{thm:union_of_cycflats} and Proposition~\ref{prop:assoc_dir_prod}, we have that $\pi_i(Z')\in[V_{i-1},V_i]$ for some $i\in \{1,\dots,k\}$, which means that $r_N(Z')=r(Z')$. Therefore, by Lemma~\ref{lem:cyccl_ineq}, we have
        \begin{align*}
            r_N(X)&=r_N(Z')+\text{dim}((X+Z')/Z')\\
            &=r(Z')+\text{dim}((X+Z')/Z')\\
            &\geq r(\text{cyc}(\text{cl}(Z')))+\text{dim}((X+\text{cyc}(\text{cl}(Z')))/\text{cyc}(\text{cl}(Z')))\\
            &\geq\min\{r(Z)+\text{dim}((X+Z)/Z):Z\in\mZ_1\}=r(X).\qedhere
        \end{align*}
    \end{proof}

    The following is an immediate consequence of Theorem~\ref{thm:free_prod_is_maximal}.

    \begin{corollary}\label{cor:maximality_free_prod}
        $M_1\square\cdots\square M_k$ is maximal in $\mathcal{M}_q(M_1,\dots,M_k)$.
    \end{corollary}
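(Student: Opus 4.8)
The plan is to deduce the corollary directly from Theorem~\ref{thm:free_prod_is_maximal} by establishing the slightly stronger fact that $M_1\square\cdots\square M_k$ is the \emph{greatest} element of $\mathcal{M}_q(M_1,\dots,M_k)$ with respect to $\preceq$; being greatest, it is in particular maximal. Recall that $M_1\square\cdots\square M_k$ itself lies in $\mathcal{M}_q(M_1,\dots,M_k)$, as observed immediately after Definition~\ref{def:poset}, so it is a legitimate candidate for the greatest element.

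First I would take an arbitrary $M=(E,r)\in\mathcal{M}_q(M_1,\dots,M_k)$. By the definition of the poset there is a chain $\boldsymbol{0}=V_0<V_1<\cdots<V_k=E$ with $M[V_{i-1},V_i]\cong M_i$ for each $i\in\{1,\dots,k\}$. Setting $N:=M[V_0,V_1]\square\cdots\square M[V_{k-1},V_k]$ and applying Theorem~\ref{thm:free_prod_is_maximal} to $M$ together with this chain, the identity map on $E$ is a weak map from $N$ to $M$, i.e.\ $r_N(X)\ge r(X)$ for all $X\in\mathcal{L}(E)$. Since the identity is a lattice automorphism of $\mathcal{L}(E)$, this weak map is in fact a weak isomorphism from $N$ to $M$, so $M\preceq N$ by definition of $\preceq$.

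Next I would note that the free product respects isomorphism of its factors (it is built from the lattices of cyclic flats and their rank values via Theorem~\ref{thm:union_of_cycflats}), so from $M[V_{i-1},V_i]\cong M_i$ for all $i$ we obtain $N\cong M_1\square\cdots\square M_k$. Combining this with $M\preceq N$ and the fact that $\preceq$ is defined on isomorphism classes gives $M\preceq M_1\square\cdots\square M_k$. As $M$ was an arbitrary element of the poset, $M_1\square\cdots\square M_k$ dominates every element of $\mathcal{M}_q(M_1,\dots,M_k)$, and since it belongs to the poset it is the greatest element, hence maximal, which is the assertion.

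The argument is essentially bookkeeping once Theorem~\ref{thm:free_prod_is_maximal} is available, so I do not anticipate a genuine obstacle. The only two points requiring care are the orientation of the weak order (one must keep in mind that $M\preceq N$ corresponds to $r_N\ge r_M$ under a lattice isomorphism, which is exactly the direction supplied by the theorem) and the identification, up to isomorphism, of the free product of the minors $M[V_{i-1},V_i]$ with $M_1\square\cdots\square M_k$.
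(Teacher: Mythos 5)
Your proposal is correct and matches the paper, which derives Corollary~\ref{cor:maximality_free_prod} as an immediate consequence of Theorem~\ref{thm:free_prod_is_maximal}; you have simply made explicit the bookkeeping (the identity weak map being a weak isomorphism, and the identification of $M[V_0,V_1]\square\cdots\square M[V_{k-1},V_k]$ with $M_1\square\cdots\square M_k$ up to isomorphism) that the paper leaves implicit. Your observation that the free product is in fact the \emph{greatest} element of $\mathcal{M}_q(M_1,\dots,M_k)$, not merely maximal, is a slight strengthening but follows by the same argument.
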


    The following, Lemma~\ref{lem:cyc_flat_weak_map}, gives a convenient sufficient condition for the existence of a weak isomorphism between ($q$-)matroids.

    \begin{lemma}\label{lem:cyc_flat_weak_map}
        Let $M_1=(E,r_1)$ and $M_2=(E,r_2)$ be $q$-matroids. If $\mZ_1\subseteq\mathcal{Z}_2$ and $r_1(Z)=r_2(Z)$ for all $Z\in\mZ_1$, then the identity map on $\mathcal{L}(E)$ is a weak isomorphism from $M_1$ to $M_2$.
    \end{lemma}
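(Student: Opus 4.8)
The plan is to note first that the identity map $\mathrm{id}_{\mathcal{L}(E)}$ is trivially a lattice isomorphism, so by the definition of a weak isomorphism the only thing that requires proof is the weak-map inequality, namely that $r_1(X)\geq r_2(X)$ for every $X\in\mL(E)$. The single tool I would use is Lemma~\ref{lem:rank_function_cyclic_flats}, which expresses the rank function of a $q$-matroid as a minimum, taken over its lattice of cyclic flats, of the quantity $r(Z)+\hh((X+Z)/Z)$.

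Concretely, I would fix $X\in\mL(E)$ and apply Lemma~\ref{lem:rank_function_cyclic_flats} to $M_1$ to select a cyclic flat $Z^\ast\in\mZ_1$ attaining the minimum, so that $r_1(X)=r_1(Z^\ast)+\hh((X+Z^\ast)/Z^\ast)$. The hypothesis $\mZ_1\subseteq\mathcal{Z}_2$ then guarantees that $Z^\ast$ is also a cyclic flat of $M_2$, and the hypothesis $r_1(Z^\ast)=r_2(Z^\ast)$ guarantees that the term $r_2(Z^\ast)+\hh((X+Z^\ast)/Z^\ast)$ appearing in the minimum defining $r_2(X)$ coincides with the number $r_1(X)$. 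Applying Lemma~\ref{lem:rank_function_cyclic_flats} to $M_2$ and bounding the minimum by this single term yields
$$r_2(X)\leq r_2(Z^\ast)+\hh((X+Z^\ast)/Z^\ast)=r_1(Z^\ast)+\hh((X+Z^\ast)/Z^\ast)=r_1(X),$$
which is exactly the inequality needed for $\mathrm{id}_{\mathcal{L}(E)}$ to be a weak map from $M_1$ to $M_2$.

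I do not expect any genuine obstacle here: the content of the argument is simply that the minimum defining $r_2$ ranges over a superset ($\mZ_2\supseteq\mZ_1$) of the index set defining $r_1$, while the agreement of the rank values on $\mZ_1$ makes the common terms equal, so the minimum for $r_2$ can only decrease. The one point to state carefully is that it suffices to bound $r_2(X)$ by a single admissible term (the minimizer $Z^\ast$ coming from $M_1$), rather than comparing the two minima term-by-term; once this is observed the conclusion $r_1(X)\geq r_2(X)$ is immediate for all $X$, and the lemma follows.
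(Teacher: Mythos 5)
Your proposal is correct and follows essentially the same route as the paper: both arguments invoke Lemma~\ref{lem:rank_function_cyclic_flats} and exploit that the minimum defining $r_2(X)$ ranges over the superset $\mZ_2\supseteq\mZ_1$ on which the rank values agree, noting that $\hh((X+Z)/Z)=\dim(X/(X\cap Z))$. Your bounding of $r_2(X)$ by the single term at the minimizer $Z^\ast$ for $M_1$ is just a slightly more explicit phrasing of the paper's one-line comparison of the two minima.
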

    \begin{proof}
        For any $X\in\mathcal{L}(E)$, by Lemma \ref{lem:rank_function_cyclic_flats}  (\cite[Corollary 3.12]{alfarano2022cyclic}) we have
        \begin{align*}
            r_2(X)&=\textup{min}\{r_2(Z)+\textup{dim}(X/X\cap Z):Z\in\mathcal{Z}_2\}\\
            &\leq\textup{min}\{r_1(Z)+\textup{dim}(X/X\cap Z):Z\in\mZ_1\}=r_1(X)
        \end{align*}
        since $\mZ_1\subseteq\mathcal{Z}_2$. The result follows from the definition of a weak isomorphism.
    \end{proof}

    It is noted in \cite{crapo2005unique} that $M_1\oplus\dots\oplus M_k$ is minimal in $\mathcal{M}(M_1,\dots,M_k)$. Interestingly, this is not always the case for $q$-matroids, which we show in the following example.

\begin{example}\label{eg:dir_sum_not_minimal}
Let $M\cong N\cong \mathcal{U}_{1,2}(q)$. Choose a basis $\{e_1,e_2,e_3,e_4\}$ of $\mathbb{F}_q^4$ such that $(M\oplus N)|\langle e_1,e_2\rangle\cong M$ and $(M\oplus N)|\langle e_3,e_4\rangle\cong N$. We then have $\mathcal{Z}(M\oplus N)=\{\boldsymbol{0},\langle e_1,e_2\rangle,\langle e_3,e_4\rangle, \mathbb{F}_q^4\}$. Let $L$ be a $q$-matroid on $\mathcal{L}(\mathbb{F}_q^4)$ such that $\mathcal{Z}(L)=\{\boldsymbol{0},\langle e_1,e_2\rangle,\langle e_3,e_4\rangle,\langle e_1+e_3,e_2+e_4\rangle, \mathbb{F}_q^4\}$. It is not difficult to see that $\mathcal{Z}(L)$ satisfies the cyclic flat axioms from Definition \ref{def:axioms_cyclic_flats}, when we give $\langle e_1+e_3,e_2+e_4\rangle$ a rank value of 1. It is clear that $L|\langle e_1,e_2\rangle\cong M$ and $L/\langle e_1,e_2\rangle\cong N$. Furthermore, we have that $\mathcal{Z}(M\oplus N)\subset\mathcal{Z}(L)$, which by Lemma~\ref{lem:cyc_flat_weak_map} means that $L \prec M\oplus N$ in the weak order on $\mathcal{M}_q(M,N)$.
\end{example}

    \subsection{Unique factorisation}\label{subsec:uniq_fact}
    \begin{definition}
        We say that a $q$-matroid is \textbf{irreducible} if it cannot be written as the free product of two non-trivial $q$-matroids.
    \end{definition}
    A natural question that arises is whether a $q$-matroid can be decomposed uniquely into irreducible components. In this section, we answer this question in the affirmative. 

   We remark that many properties of the lattice of cyclic flats of a ($q$-)matroid do not depend on whether or not the ambient lattice is a Boolean or a subspace lattice. Some arguments that employ the cyclic flats of matroids in \cite{crapo2005unique} hold true also for $q$-matroids. Therefore, in this section, we provide only the proofs of the results that do not immediately follow as in the matroid case.
   
    \begin{definition}
        An element $A\in\mathcal{L}(E)$ is a \textbf{free separator} of a $q$-matroid $M=(E,r)$ if every cyclic flat in $\mathcal{Z}(M)$ is comparable to $A$ in $\mathcal{L}(E)$.
    \end{definition}

      Observe that, trivially, $\boldsymbol{0}$ and $E$ are free separators. We refer to any other free separator as a \emph{non-trivial} free separator.
      
    We include the following useful notation for lattices of cyclic flats. Note that it does not equate to taking the lattice of cyclic flats of minors of the $q$-matroid.

    \begin{notation}
        Let $M=(E,r)$ be a $q$-matroid. For $X\in\mathcal{L}(E)$ we will let $\mathcal{Z}(M)|X=\{Z\in\mathcal{Z}(M):Z\leq X\}$ and $\mathcal{Z}(M)/X=\{Z\in\mathcal{Z}(M):Z\geq X\}$.
    \end{notation}

    The following result is the $q$-analogue of \cite[Theorem 6.3]{crapo2005unique}. 
    We will rely heavily on our characterization result on the lattice of cyclic flats of the free product of a pair of $q$-matroids (Theorem \ref{thm:union_of_cycflats}) in order to prove it. 

    \begin{theorem}\label{thm:free_sep_free_prod}
        For any $q$-matroid $M=(E_1\oplus E_2,r)$, the following are equivalent:
        \begin{enumerate}
            \item[(i)] $M=(M|(E_1\oplus\boldsymbol{0}))\square(M/(E_1\oplus\boldsymbol{0}))$.
            \item[(ii)] $E_1\oplus\boldsymbol{0}$ is a free separator of $M$.
        \end{enumerate}
    \end{theorem}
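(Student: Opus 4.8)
The plan is to treat the two implications separately, using the description of $\mathcal{Z}(M_1\square M_2)$ from Theorem~\ref{thm:union_of_cycflats} as the central tool and closing the nontrivial direction with the maximality of the free product (Theorem~\ref{thm:free_prod_is_maximal}). Throughout I would write $A=E_1\oplus\boldsymbol{0}$, $M_1=M|A$, $M_2=M/A$ and $N=M_1\square M_2$, so that $N|A\cong M_1$ and $N/A\cong M_2$. For (i)$\Rightarrow$(ii) there is nothing to do beyond invoking Theorem~\ref{thm:union_of_cycflats}: if $M=N$ then every cyclic flat of $M$ lies in $\iota_1(\mathcal{Z}(M_1))$ or in $E_1\oplus\mathcal{Z}(M_2)$, hence is comparable to $A$, so $A$ is a free separator.

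For the converse, the key step I would isolate is a one-directional correspondence of cyclic flats under minors, valid with no hypothesis on loops or coloops: (a) if $Z\in\mathcal{Z}(M)$ with $Z\leq A$, then $Z\in\mathcal{Z}(M_1)$; and (b) if $Z\in\mathcal{Z}(M)$ with $Z\geq A$, then $\pi_2(Z)\in\mathcal{Z}(M_2)$. To prove (a) I would observe that a flat of $M$ contained in $A$ is maximal with its rank among all subspaces, hence a fortiori among subspaces of $A$, so it is a flat of $M_1$; and any circuit of $M$ contained in $A$ is a circuit of $M_1$, so a subspace of $A$ that is a sum of circuits of $M$ is cyclic in $M_1$. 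I would then deduce (b) by duality: under $U\mapsto U^\perp$ the cyclic flats of $M$ lying above $A$ correspond to cyclic flats of $M^\ast$ lying below $A^\perp$, which by (a) are cyclic flats of $M^\ast|A^\perp\cong(M/A)^\ast$ (Lemma~\ref{lem:duality_properties}, together with the fact that $U\mapsto U^\perp$ sends cyclic flats to cyclic flats of the dual); dualising back yields cyclic flats of $M_2$.

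Now assuming (ii), the free separator hypothesis says precisely that each $Z\in\mathcal{Z}(M)$ satisfies $Z\leq A$ or $Z\geq A$. If $Z\leq A$ then $Z\in\mathcal{Z}(M_1)$ by (a), while if $Z\geq A$ then $Z=E_1\oplus\pi_2(Z)$ with $\pi_2(Z)\in\mathcal{Z}(M_2)$ by (b); in both cases Theorem~\ref{thm:union_of_cycflats} gives $Z\in\mathcal{Z}(N)$, except possibly when $Z=A$. The boundary element $Z=A$ I would handle directly: $A\in\mathcal{Z}(M)$ means $A$ is both cyclic and a flat of $M$, which translates into $M_1$ being coloopless and $M_2$ being loopless, and this is exactly the case in which Theorem~\ref{thm:union_of_cycflats} places $A=E_1\oplus\boldsymbol{0}$ in $\mathcal{Z}(N)$. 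Hence $\mathcal{Z}(M)\subseteq\mathcal{Z}(N)$, and the ranks agree on $\mathcal{Z}(M)$: by Lemma~\ref{lem:ext_cases_rank}, $r_N(Z)=r_1(\pi_1(Z))=r(Z)$ when $Z\leq A$, and $r_N(Z)=r_1(E_1)+r_2(\pi_2(Z))=r(A)+(r(Z)-r(A))=r(Z)$ when $Z\geq A$.

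To conclude I would combine two weak-order inequalities. Since $\mathcal{Z}(M)\subseteq\mathcal{Z}(N)$ with equal rank values, Lemma~\ref{lem:cyc_flat_weak_map} makes the identity a weak isomorphism from $M$ to $N$, so $r\geq r_N$; and by Theorem~\ref{thm:free_prod_is_maximal} the identity is a weak map from $N$ to $M$, so $r\leq r_N$. Therefore $r=r_N$, and as a $q$-matroid is determined by its rank function we obtain $M=N=M_1\square M_2$, which is (i). I expect the main obstacle to be the minor-correspondence (a)--(b), in particular verifying (b) cleanly through duality and isolating the single boundary element $A$; once these are in place, the maximality theorem closes the argument with essentially no further computation.
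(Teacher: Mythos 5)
Your proof is correct and follows essentially the same route as the paper's: both directions rest on Theorem~\ref{thm:union_of_cycflats}, and the converse is closed by combining the weak isomorphism from Lemma~\ref{lem:cyc_flat_weak_map} (via $\mathcal{Z}(M)\subseteq\mathcal{Z}(N)$ with matching ranks) with the maximality of the free product from Theorem~\ref{thm:free_prod_is_maximal}. The only difference is that you spell out what the paper declares ``clear'' --- the containments $\mathcal{Z}(M)|(E_1\oplus\boldsymbol{0})\subseteq\mathcal{Z}(M_1)$ and $\mathcal{Z}(M)/(E_1\oplus\boldsymbol{0})\subseteq\mathcal{Z}(M_2)$ (the latter via duality), the boundary case $Z=E_1\oplus\boldsymbol{0}$, and the rank agreement required by Lemma~\ref{lem:cyc_flat_weak_map} --- all of which check out.
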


    \begin{proof}
        \underline{(i)$\implies$(ii)} This follows from Theorem~\ref{thm:union_of_cycflats}.

        \underline{(ii)$\implies$(i)} Assume that $E_1\oplus\boldsymbol{0}$ is a free separator of $M$ and let $N=(M|(E_1\oplus\boldsymbol{0}))\square(M/(E_1\oplus\boldsymbol{0}))$. It is clear that we have $\mathcal{Z}(M)|(E_1\oplus\boldsymbol{0})\subseteq\mathcal{Z}(M|(E_1\oplus\boldsymbol{0}))$ and $\mathcal{Z}(M)/(E_1\oplus\boldsymbol{0})\subseteq\mathcal{Z}(M/(E_1\oplus\boldsymbol{0}))$. We thus have that
        \begin{equation}\label{eq:cyc_flat_subset}
            \mathcal{Z}(M)\subseteq\{\iota_1(Z):Z\in\mathcal{Z}(M|(E_1\oplus\boldsymbol{0}))\}\dot\cup\{(E_1\oplus\boldsymbol{0})+\iota_2(Z):Z\in\mathcal{Z}(M/(E_1\oplus\boldsymbol{0}))\}.
        \end{equation}
        By Theorem~\ref{thm:union_of_cycflats}, we deduce that $E_1\oplus\boldsymbol{0}\in\mathcal{Z}(N)$ if and only if $E_1\oplus\boldsymbol{0}\in\mathcal{Z}(M)$. Therefore, using (\ref{eq:cyc_flat_subset}) we conclude that $\mathcal{Z}(M)\subseteq\mathcal{Z}(N)$. By Lemma~\ref{lem:cyc_flat_weak_map} we now have that the identity map on $\mathcal{L}(E_1\oplus E_2)$ is a weak map from $M$ to $N$. Since $N|(E_1\oplus\boldsymbol{0})=M|(E_1\oplus\boldsymbol{0})$ and $N/(E_1\oplus\boldsymbol{0})=M/(E_1\oplus\boldsymbol{0})$, Theorem~\ref{thm:free_prod_is_maximal} gives us that the identity map on $\mathcal{L}(E_1\oplus E_2)$ is a weak map from $N$ to $M$. Therefore, $M=N$.
    \end{proof}

    Clearly, if a $q$-matroid $M = (E,r)$ is reducible with respect to the free product, then there exist subspaces $E_1$ and $E_2$ such that $E=E_1 \oplus E_2$ and
    $M=(M|(E_1\oplus\boldsymbol{0}))\square(M/(E_1\oplus\boldsymbol{0}))$, in which case $E_1 \oplus \zero$ is a free separator of $M$. We therefore have the following.

    \begin{corollary}\label{thm:irred_iff_no_free_sep}
        For any non-zero $q$-matroid $M$, the following are equivalent:
        \begin{enumerate}
            \item $M$ is irreducible with respect to the free product.
            \item $M$ has no non-trivial free separator.
        \end{enumerate}
    \end{corollary}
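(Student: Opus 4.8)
The plan is to derive this as a direct consequence of Theorem~\ref{thm:free_sep_free_prod} together with the remark immediately preceding the statement, proving the contrapositive of each implication. The two notions to be matched are ``reducible with respect to the free product'' (i.e.\ $M\cong M_1\square M_2$ for non-trivial $M_1,M_2$) and ``possesses a non-trivial free separator,'' a free separator being called non-trivial when it differs from $\boldsymbol{0}$ and $E$.

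For (2)$\Rightarrow$(1) I would argue by contraposition: assume $M$ is reducible, so there are non-trivial $q$-matroids $M_1,M_2$ with $M\cong M_1\square M_2$. Writing $E_1,E_2$ for their ground spaces, the free product lives on $E_1\oplus E_2$ and satisfies $(M_1\square M_2)|(E_1\oplus\boldsymbol{0})\cong M_1$ and $(M_1\square M_2)/(E_1\oplus\boldsymbol{0})\cong M_2$, as recorded in the proof of Theorem~\ref{thm:union_of_cycflats}. Hence $M$ is in situation (i) of Theorem~\ref{thm:free_sep_free_prod}, which then yields that $E_1\oplus\boldsymbol{0}$ is a free separator of $M$. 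It is non-trivial, since $\dim E_1>0$ (as $M_1$ is non-trivial) and $\dim E_1<\dim E$ (as $M_2$, hence $E_2$, is non-trivial). Thus $M$ has a non-trivial free separator, negating (2).

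For (1)$\Rightarrow$(2), again by contraposition, I would start from a non-trivial free separator $A\in\mathcal{L}(E)$, so $\boldsymbol{0}<A<E$. The one point needing care is that Theorem~\ref{thm:free_sep_free_prod} is phrased for a ground space already presented as a direct sum $E_1\oplus E_2$ with distinguished summand $E_1\oplus\boldsymbol{0}$, whereas a free separator is merely a subspace. To bridge this I would fix any complement $B$ of $A$ in $E$, set $E_1=A$ and $E_2=B$, and identify $E=E_1\oplus E_2$, observing that both $M|A$ and the contraction $M/A$ depend only on $A$ and not on the choice of $B$. With this identification $A=E_1\oplus\boldsymbol{0}$ is a free separator, so Theorem~\ref{thm:free_sep_free_prod} gives $M=(M|A)\square(M/A)$. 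Since $A$ is non-trivial, $M|A$ has ground space of dimension $\dim A>0$ and $M/A$ has ground space of dimension $\dim E-\dim A>0$, so both factors are non-trivial and $M$ is reducible, negating (1).

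The argument is short because the substantive work is already carried out in Theorem~\ref{thm:free_sep_free_prod}; the only genuine content is the bookkeeping around ``non-trivial,'' namely checking that a free separator strictly between $\boldsymbol{0}$ and $E$ produces factors with positive-dimensional ground spaces and, conversely, that non-trivial factors produce a free separator distinct from $\boldsymbol{0}$ and $E$. I expect the mild subtlety of matching the abstract subspace $A$ to the direct-sum presentation required by the theorem -- choosing a complement and noting independence of that choice -- to be the only step deserving more than a one-line justification.
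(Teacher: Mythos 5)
Your proposal is correct and follows exactly the paper's route: the paper derives this corollary immediately from Theorem~\ref{thm:free_sep_free_prod} together with the preceding remark that reducibility yields a decomposition $E=E_1\oplus E_2$ with $M=(M|(E_1\oplus\boldsymbol{0}))\,\square\,(M/(E_1\oplus\boldsymbol{0}))$, so that $E_1\oplus\boldsymbol{0}$ is a (non-trivial) free separator, and conversely. Your explicit handling of the contrapositions, the choice of a complement of $A$, and the dimension bookkeeping merely spells out details the paper leaves implicit.
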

    
    \begin{definition}
        We say that a $q$-matroid is \textbf{disconnected} if it can be written as direct sum of a pair of non-trivial $q$-matroids.
    \end{definition}
    
    \begin{corollary}
        Let $M$ be loopless and coloopless. If $M$ is disconnected, then it is irreducible.
    \end{corollary}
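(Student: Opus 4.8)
The plan is to invoke Corollary~\ref{thm:irred_iff_no_free_sep}, which reduces the claim to showing that a loopless, coloopless, disconnected $q$-matroid $M$ admits no non-trivial free separator. Write $M=M_1\oplus M_2$ with $M_i=(E_i,r_i)$ non-trivial, so that $E=E_1\oplus E_2$ with $E_1,E_2\neq\boldsymbol{0}$. The entire argument will be carried out inside the lattice of cyclic flats, whose description for a direct sum is supplied by Lemma~\ref{lem:sum_cyclic_flats}, namely $\mathcal{Z}(M)=\mathcal{Z}_1\oplus\mathcal{Z}_2=\{Z_1\oplus Z_2:Z_1\in\mathcal{Z}_1,\,Z_2\in\mathcal{Z}_2\}$.

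First I would translate the loopless/coloopless hypotheses into membership statements for $\mathcal{Z}(M)$. As already used in the proof of Theorem~\ref{thm:cyc_flats_axioms}, $M$ is loopless precisely when $\boldsymbol{0}\in\mathcal{Z}(M)$ and coloopless precisely when $E\in\mathcal{Z}(M)$. Since the minimal (resp. maximal) element of $\mathcal{Z}_1\oplus\mathcal{Z}_2$ is $\boldsymbol{0}_{\mathcal{Z}_1}\oplus\boldsymbol{0}_{\mathcal{Z}_2}$ (resp. $\boldsymbol{1}_{\mathcal{Z}_1}\oplus\boldsymbol{1}_{\mathcal{Z}_2}$), these two membership conditions force $\boldsymbol{0}\in\mathcal{Z}_i$ and $E_i\in\mathcal{Z}_i$ for $i=1,2$. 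Consequently both $P:=E_1\oplus\boldsymbol{0}=E_1\oplus\boldsymbol{0}_{\mathcal{Z}_2}$ and $Q:=\boldsymbol{0}\oplus E_2=\boldsymbol{0}_{\mathcal{Z}_1}\oplus E_2$ lie in $\mathcal{Z}_1\oplus\mathcal{Z}_2=\mathcal{Z}(M)$. Because $E_1,E_2\neq\boldsymbol{0}$ while $P\cap Q=\boldsymbol{0}$, the cyclic flats $P$ and $Q$ are incomparable in $\mathcal{L}(E)$.

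Next I would run a short case analysis on a hypothetical free separator $A$, which by definition must be comparable to every cyclic flat, in particular to both $P$ and $Q$. If $A\leq P$ and $A\leq Q$, then $A\leq P\cap Q=\boldsymbol{0}$; if $P\leq A$ and $Q\leq A$, then $A\geq P+Q=E$. The two mixed cases $Q\leq A\leq P$ and $P\leq A\leq Q$ each force $P$ and $Q$ to be comparable, contradicting the previous paragraph. Hence $A\in\{\boldsymbol{0},E\}$, so $M$ has no non-trivial free separator and is irreducible by Corollary~\ref{thm:irred_iff_no_free_sep}.

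The argument is short, and the only point that requires care is the first step: confirming that the loopless and coloopless hypotheses on $M$ propagate to both factors, so that the distinguished incomparable pair $E_1\oplus\boldsymbol{0}$ and $\boldsymbol{0}\oplus E_2$ genuinely lies in $\mathcal{Z}(M)$. This is exactly where both hypotheses, together with the non-triviality of $M_1$ and $M_2$, are used, and it is the crux of why disconnectedness combined with looplessness and cooloplessness yields irreducibility.
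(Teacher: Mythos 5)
Your proof is correct and takes essentially the same route as the paper: both arguments place the two incomparable cyclic flats $E_1\oplus\boldsymbol{0}$ and $\boldsymbol{0}\oplus E_2$ in $\mathcal{Z}(M)$ via Lemma~\ref{lem:sum_cyclic_flats} and then conclude by Corollary~\ref{thm:irred_iff_no_free_sep}. The only (cosmetic) difference is that the paper transfers looplessness and cooloplessness to the factors by citing \cite[Proposition 13]{ceria2024direct} and its dual, whereas you derive this directly from the fact that $\boldsymbol{0}=Z_1\oplus Z_2$ and $E=Z_1\oplus Z_2$ force $\boldsymbol{0},E_i\in\mathcal{Z}_i$, and you spell out the comparability case analysis that the paper leaves implicit --- both of which make your write-up self-contained without changing the underlying argument.
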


    \begin{proof}
        If $M$ is disconnected, then there exist non-trivial $q$-matroids $M_1$ and $M_2$ on the spaces $E_1$ and $E_2$ respectively such that $M=M_1\oplus M_2$. By \cite[Proposition 13]{ceria2024direct} and its dual statement, we have that $M_1$ and $M_2$ are both loopless and coloopless. In particular, $\{\boldsymbol{0},E_1\}\subseteq\mathcal{Z}(M_1)$ and $\{\boldsymbol{0},E_2\}\subseteq\mathcal{Z}(M_2)$. By Lemma \ref{lem:sum_cyclic_flats} (\cite[Proposition 6.2]{gluesing2023decompositions}) we have that $E_1\oplus\boldsymbol{0},\boldsymbol{0}\oplus E_2\in\mathcal{Z}(M)$. By 
        Corollary~\ref{thm:irred_iff_no_free_sep} 
    the result follows.
    \end{proof}

    We recall the definition of a \emph{pinchpoint} from \cite{crapo2005unique}.

    \begin{definition}
        Let $P$ be an arbitrary poset and let $x\in P$. We say that $x$ is a \textbf{pinchpoint} if all other elements of $P$ are comparable to $x$. A pinchpoint different from $\boldsymbol{0}$ and $\boldsymbol{1}$ is called \textbf{non-trivial}. 
    \end{definition}

    We recall the following convenient definitions from \cite{crapo2005unique}, endowing them with the obvious $q$-analogue.

    \begin{definition}\label{def:D_lattice}
        Let $M=(\mathcal{L},r)$ be a $q$-matroid and $\mathcal{Z}$ its lattice of cyclic flats. Define the sublattice $\mathcal{D}(M)$ of $\mathcal{L}$ by
        $$\mathcal{D}(M):=\left\{\bigcap_{Z\in S}Z:S\subseteq \mathcal{Z}\right\}\cup\left\{\sum_{Z\in S}Z:S\subseteq\mathcal{Z}\right\}.$$
    \end{definition}

    Definition~\ref{def:D_lattice} is used to facilitate the following definition, which will be central to the decomposition of a $q$-matroid by the free product. The following is the $q$-analogue of \cite[Definition 6.14]{crapo2005unique}.

    \begin{definition}
        The \textbf{primary flag} $\mathcal{T}_M$ of a $q$-matroid $M$ is the chain $T_0<\dots< T_k$ of all pinchpoints of $\mathcal{D}(M)$.
    \end{definition}

    The following proposition is the $q$-analogue of \cite[Proposition 6.9]{crapo2005unique}, and follows immediately from Lemma~\ref{lem:cyc_flats_uniform}.
    
    \begin{proposition}\label{prop:D(M)_of_uniform}
        A non-trivial $q$-matroid $M$ is uniform if and only if $\mathcal{D}(M)=\{\zero,\one\}$.
    \end{proposition}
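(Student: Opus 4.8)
The plan is to reduce the statement entirely to the lattice of cyclic flats $\mathcal{Z}=\mathcal{Z}(M)$ and then invoke Lemma~\ref{lem:cyc_flats_uniform}, which characterises uniform $q$-matroids by $\mathcal{Z}=\{\boldsymbol{0},E\}$. The key bridge I would establish first is the equivalence
$$\mathcal{D}(M)=\{\boldsymbol{0},\boldsymbol{1}\}\iff\mathcal{Z}\subseteq\{\boldsymbol{0},\boldsymbol{1}\},$$
after which the proposition becomes a direct citation of the lemma.

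For the bridge I would use two observations. First, $\mathcal{Z}\subseteq\mathcal{D}(M)$ always holds, since each $Z\in\mathcal{Z}$ arises from the singleton $S=\{Z\}$ as both $\bigcap_{W\in S}W$ and $\sum_{W\in S}W$; hence $\mathcal{D}(M)=\{\boldsymbol{0},\boldsymbol{1}\}$ forces $\mathcal{Z}\subseteq\{\boldsymbol{0},\boldsymbol{1}\}$. Second, $\boldsymbol{0}$ and $\boldsymbol{1}$ always lie in $\mathcal{D}(M)$ (take $S=\emptyset$, with the conventions $\sum_{Z\in\emptyset}Z=\boldsymbol{0}$ and $\bigcap_{Z\in\emptyset}Z=\boldsymbol{1}$), and if every element of $\mathcal{Z}$ equals $\boldsymbol{0}$ or $\boldsymbol{1}=E$, then every intersection and every sum occurring in Definition~\ref{def:D_lattice} is again $\boldsymbol{0}$ or $E$. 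Thus $\mathcal{Z}\subseteq\{\boldsymbol{0},\boldsymbol{1}\}$ forces $\mathcal{D}(M)=\{\boldsymbol{0},\boldsymbol{1}\}$, which settles the equivalence.

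It then remains to identify $\mathcal{Z}\subseteq\{\boldsymbol{0},E\}$ with uniformity, where I would be careful about the boundary ranks that Lemma~\ref{lem:cyc_flats_uniform} does not literally cover. If $\boldsymbol{0},E\in\mathcal{Z}$, then axiom~\ref{z2} applied to the chain $\boldsymbol{0}<E$ gives $0<r(E)<\dim E$, so the lemma applies and yields $M\cong\mathcal{U}_{r(E),n}(q)$. If instead $\mathcal{Z}=\{\boldsymbol{0}\}$, then $\boldsymbol{1}_{\mathcal{Z}}=\cyc_r(E)=\boldsymbol{0}$ shows $M$ has no circuits, so $M\cong\mathcal{U}_{n,n}(q)$; and if $\mathcal{Z}=\{E\}$, then $\boldsymbol{0}_{\mathcal{Z}}=\cl_r(\boldsymbol{0})=E$ gives $r\equiv0$, so $M\cong\mathcal{U}_{0,n}(q)$. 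Conversely, each uniform $q$-matroid has $\mathcal{Z}\subseteq\{\boldsymbol{0},E\}$. Combining this with the bridge equivalence gives the proposition. The only genuine obstacle is precisely this boundary bookkeeping, together with fixing the empty-index-set conventions so that $\boldsymbol{0},\boldsymbol{1}\in\mathcal{D}(M)$; the core of the argument is the one-line observation that $\mathcal{D}(M)$ collapses to $\{\boldsymbol{0},\boldsymbol{1}\}$ exactly when $\mathcal{Z}$ does.
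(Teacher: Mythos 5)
Your proof is correct and takes essentially the same route as the paper: the paper gives no written argument, stating only that the proposition ``follows immediately from Lemma~\ref{lem:cyc_flats_uniform}'', and your proof is exactly that reduction, made explicit via the bridge $\mathcal{D}(M)=\{\zero,\one\}\iff\mathcal{Z}(M)\subseteq\{\zero,\one\}$ together with Lemma~\ref{lem:cyc_flats_uniform}. Your careful treatment of the empty-index-set conventions and of the boundary cases $\mathcal{Z}=\{\zero\}$ and $\mathcal{Z}=\{E\}$ (i.e.\ $\mathcal{U}_{n,n}(q)$ and $\mathcal{U}_{0,n}(q)$, which the lemma's hypothesis $0<k<n$ excludes) fills in details the paper leaves implicit rather than deviating from its approach.
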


    The following, Theorem~\ref{thm:no_pinchp_iff_irred}, is the $q$-analogue of \cite[Theorem 6.11]{crapo2005unique}.

    \begin{theorem}\label{thm:no_pinchp_iff_irred}
        For any non-uniform $q$-matroid $M$ with ground space $E$, the following are equivalent:
        \begin{enumerate}
            \item $M$ is irreducible with respect to the free product.
            \item The lattice $\mathcal{D}(M)$ contains no non-trivial pinchpoint.
        \end{enumerate}
    \end{theorem}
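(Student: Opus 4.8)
The plan is to connect irreducibility with the existence of a non-trivial pinchpoint of $\mathcal{D}(M)$, using Corollary~\ref{thm:irred_iff_no_free_sep} as the bridge: irreducibility is equivalent to the absence of a non-trivial free separator, and a free separator is by definition a subspace comparable to every cyclic flat. So the real content is to relate ``comparable to every element of $\mathcal{Z}(M)$'' to ``being a non-trivial pinchpoint of $\mathcal{D}(M)$.'' First I would establish that any pinchpoint of $\mathcal{D}(M)$ is in particular comparable to every cyclic flat (since $\mathcal{Z}(M)\subseteq\mathcal{D}(M)$), hence is a free separator; this gives the direction (2)$\implies$(1) in contrapositive form, i.e. a non-trivial pinchpoint yields a non-trivial free separator and thus reducibility.

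For the converse, (1)$\implies$(2) in contrapositive, I would start from a non-trivial free separator $A$ of $M$ and show it produces a non-trivial pinchpoint of $\mathcal{D}(M)$. The subtlety is that $A$ itself need not lie in $\mathcal{D}(M)$, so I cannot simply declare $A$ a pinchpoint. The natural fix is to replace $A$ by a canonical element of $\mathcal{D}(M)$ manufactured from $A$ and the cyclic flats, namely $\sum\{Z\in\mathcal{Z}(M):Z\leq A\}$ or dually $\bigcap\{Z\in\mathcal{Z}(M):Z\geq A\}$, both of which lie in $\mathcal{D}(M)$ by Definition~\ref{def:D_lattice}. Because $A$ is comparable to every cyclic flat, each $Z\in\mathcal{Z}(M)$ satisfies either $Z\leq A$ or $Z\geq A$, so these two constructed elements sandwich $A$, and I would argue that the resulting element $T$ of $\mathcal{D}(M)$ is comparable to every other element of $\mathcal{D}(M)$. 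The key point here is that every member of $\mathcal{D}(M)$ is either a sum or an intersection of cyclic flats, and comparability of $A$ with all cyclic flats should propagate to comparability of $T$ with all such sums and intersections.

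The main obstacle I anticipate is verifying that $T$ is genuinely non-trivial (i.e. $T\neq\boldsymbol{0}$ and $T\neq\boldsymbol{1}$) and that it is comparable to every element of $\mathcal{D}(M)$, rather than merely to every cyclic flat. For non-triviality I would use the hypothesis that $M$ is non-uniform: by Proposition~\ref{prop:D(M)_of_uniform}, non-uniformity means $\mathcal{D}(M)\neq\{\boldsymbol{0},\boldsymbol{1}\}$, so there is genuine structure available, and the non-trivial free separator $A$ being strictly between $\boldsymbol{0}$ and $\boldsymbol{1}$ should force the sandwiching elements to be non-trivial as well. For the comparability-to-all-of-$\mathcal{D}(M)$ step, I would take an arbitrary $D\in\mathcal{D}(M)$, write it as $\sum_{Z\in S}Z$ or $\bigcap_{Z\in S}Z$ for some $S\subseteq\mathcal{Z}(M)$, and split $S$ according to whether each $Z$ lies below or above $A$; then use the lattice operations in $\mathcal{L}(E)$ together with the free-separator property to conclude that $D$ is comparable to $T$.

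Once both implications are in hand, the theorem follows by combining them: $M$ irreducible $\iff$ no non-trivial free separator (Corollary~\ref{thm:irred_iff_no_free_sep}) $\iff$ no non-trivial pinchpoint of $\mathcal{D}(M)$. I would also keep in mind that because these arguments concern only the abstract lattice $\mathcal{D}(M)$ built from cyclic flats, they parallel the matroid case in \cite{crapo2005unique}, so the authors' remark that lattice-theoretic arguments transfer verbatim suggests much of this can be cited rather than reproved; the genuinely $q$-specific input is just Corollary~\ref{thm:irred_iff_no_free_sep} and Proposition~\ref{prop:D(M)_of_uniform}.
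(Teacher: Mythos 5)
Your proposal is correct and takes essentially the same route as the paper: the paper's proof likewise reduces via Theorem~\ref{thm:free_sep_free_prod} (equivalently Corollary~\ref{thm:irred_iff_no_free_sep}) to the existence of a non-trivial free separator $A$, splits $\mathcal{Z}(M)=\mathcal{Z}_1\cup\mathcal{Z}_2$ into the cyclic flats below and above $A$, and identifies reducibility with $\sum_{Z\in\mathcal{Z}_1}Z\leq A\leq\bigcap_{Z\in\mathcal{Z}_2}Z$, i.e.\ with a non-trivial pinchpoint of $\mathcal{D}(M)$ once non-uniformity is invoked. If anything, your sketch is more explicit than the paper's terse equivalence chain about why the sandwiching elements lie in $\mathcal{D}(M)$ and are comparable to every sum and intersection of cyclic flats, and about why non-uniformity (via Proposition~\ref{prop:D(M)_of_uniform}) is exactly what prevents both sandwiching elements from being trivial.
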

    \begin{proof}
        By Theorem~\ref{thm:free_sep_free_prod}, $M$ is reducible if and only if $M$ has a non-trivial free separator $A\in\mathcal{L}(E)$, which is equivalent to saying that there exist some $\mathcal{Z}_1,\mathcal{Z}_2\subset\mathcal{Z}(M)$ such that $\mathcal{Z}(M)=\mathcal{Z}_1\cup\mathcal{Z}_2$ and $Z_1\leq A\leq Z_2$ for all $Z_1\in\mathcal{Z}_1$ and $Z_2\in\mathcal{Z}_2$. That is equivalent to saying that 
        $$\sum_{Z\in\mathcal{Z}_1}Z\leq A\leq\bigcap_{Z\in\mathcal{Z}_1}Z,$$
        which is possible if and only if the lattice $\mathcal{D}(M)$ contains a non-trivial pinchpoint when $M$ is not uniform.
    \end{proof}

    \begin{notation}
        We denote by $\mathcal{F}(M)$ the set of free separators of $M$. We denote $[A,B]\cap\mathcal{F}(M)$ by $[A,B]_\mathcal{F}$.
    \end{notation}

    The following result, Lemma~\ref{lem:free_sep_isom}, is the $q$-analogue of \cite[Lemma 6.12]{crapo2005unique}. We offer an alternative method of proof using cyclic flats.

    \begin{lemma}\label{lem:free_sep_isom}
        Let $A,B\in\mathcal{F}(M)$ and $A\leq B$. The map from $[A,B]_\mathcal{F}$ to $\mathcal{F}(M[A,B])$ defined by $X\mapsto X/A$ is a lattice isomorphism.
    \end{lemma}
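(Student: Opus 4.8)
The plan is to reduce the statement to two special cases — restriction to a free separator and contraction by a free separator — each of which is controlled by the description of the cyclic flats of a free product (Theorem~\ref{thm:union_of_cycflats}) together with the fact that a free separator splits $M$ as a free product (Theorem~\ref{thm:free_sep_free_prod}).

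First I would record an elementary fact: for any $q$-matroid $N=(F,r)$ the set $\mathcal{F}(N)$ of free separators is a sublattice of $\mathcal{L}(F)$. Indeed, if every cyclic flat is comparable to each of $A_1,A_2$, a short case check (treating the cases where a cyclic flat lies below both, above both, or between them) shows it is comparable to $A_1+A_2$ and to $A_1\cap A_2$. Consequently $[A,B]_\mathcal{F}=\mathcal{F}(M)\cap[A,B]$ is a sublattice of $\mathcal{L}(E)$, and likewise $\mathcal{F}(M[A,B])$ is a sublattice of $\mathcal{L}(B/A)$. Since $X\mapsto X/A$ is the canonical lattice isomorphism $[A,B]\to\mathcal{L}(B/A)$ — in particular injective and compatible with $+$ and $\cap$ — it will suffice to prove that it maps $[A,B]_\mathcal{F}$ \emph{onto} $\mathcal{F}(M[A,B])$; the lattice-isomorphism property is then inherited from the ambient isomorphism restricted to these two sublattices.

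The core of the argument is two lemmas. Restriction: if $B\in\mathcal{F}(M)$ then $\mathcal{F}(M|B)=\mathcal{F}(M)\cap[\boldsymbol{0},B]$. Contraction: if $A\in\mathcal{F}(M)$ then $\mathcal{F}(M/A)=\{F/A : F\in\mathcal{F}(M),\ F\geq A\}$. To prove these I would use Theorem~\ref{thm:free_sep_free_prod} to write $M=(M|B)\square(M/B)$ (respectively $M=(M|A)\square(M/A)$) after choosing a complement, and then read off from Theorem~\ref{thm:union_of_cycflats} that the cyclic flats of $M$ lying strictly below $B$ are exactly the cyclic flats of $M|B$ other than $B$, and that the cyclic flats of $M$ lying strictly above $A$ are exactly the spaces $A+\iota_2(Z)$ with $Z$ a nonzero cyclic flat of $M/A$. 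A subspace $Y\leq B$ is comparable to all of these iff it is a free separator of $M|B$, because the remaining cyclic flats of $M$ lie above $B\geq Y$ and are automatically comparable to $Y$; dually for $M/A$. Granting the two lemmas, I finish as follows: recall from Definition~\ref{def:restriction} that $M[A,B]=(M|B)/A$; by the Restriction lemma $A\in\mathcal{F}(M)\cap[\boldsymbol{0},B]=\mathcal{F}(M|B)$, so the Contraction lemma applies to $M|B$ with free separator $A$, yielding
$$\mathcal{F}(M[A,B])=\mathcal{F}\big((M|B)/A\big)=\{F/A : F\in\mathcal{F}(M|B),\ F\geq A\}=\{F/A : F\in\mathcal{F}(M)\cap[A,B]\}=\{X/A : X\in[A,B]_\mathcal{F}\}.$$
Thus $X\mapsto X/A$ is exactly the asserted bijection, and by the second paragraph it is a lattice isomorphism.

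The main obstacle will be the cyclic-flat bookkeeping inside the two lemmas: the cyclic flats of a restriction or contraction are not in general merely the obvious part of $\mathcal{Z}(M)$, and it is the free separator hypothesis — via the splitting $M=(M|B)\square(M/B)$ and Theorem~\ref{thm:union_of_cycflats} — that forces this coincidence. The one delicate point is the extreme element: depending on the loop/coloop cases in Theorem~\ref{thm:union_of_cycflats}, the top $B$ of $M|B$ (respectively the bottom $\boldsymbol{0}$ of $M/A$) may or may not itself be a cyclic flat, so $\mathcal{Z}(M|B)$ and $\mathcal{Z}(M)\cap[\boldsymbol{0},B]$ can differ in this single element. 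I would verify that this discrepancy never affects which subspaces are free separators, since that extreme element is comparable to every $Y\leq B$ (respectively every $F\geq A$) regardless.
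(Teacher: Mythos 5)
Your proof is correct, but it takes a genuinely different route from the paper. The paper's proof works directly with the operators of $M$: assuming a cyclic flat $X$ of the minor $M[A,B]$ lies outside $\mathcal{Z}(M)\cap[A,B]$, it considers $\mathrm{cl}(\mathrm{cyc}(X))$ and $\mathrm{cyc}(\mathrm{cl}(X))$ computed in $M$, uses the free-separator property of $A$ and $B$ to force these spaces below $A$ or above $B$, and concludes that $X\in\{A,B\}$; the identification $\mathcal{Z}(M[A,B])\setminus\{A,B\}=\mathcal{Z}(M)\cap[A,B]$ then yields the correspondence of free separators in one stroke. You instead factor the statement through the free-product machinery: the splitting $M=(M|B)\,\square\,(M/B)$ from Theorem~\ref{thm:free_sep_free_prod} together with the structure of $\mathcal{Z}(M_1\square M_2)$ from Theorem~\ref{thm:union_of_cycflats}, proving separate restriction and contraction lemmas for $\mathcal{F}(\cdot)$ and composing them via $M[A,B]=(M|B)/A$. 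Since Theorems~\ref{thm:free_sep_free_prod} and~\ref{thm:union_of_cycflats} precede this lemma in the paper, there is no circularity, and your bookkeeping is sound: the automatic comparability of $Y\leq B$ with every cyclic flat lying above $B$ (dually for contraction) is exactly the point, and you correctly flag the one delicate issue, namely that the extreme elements $B$ (resp.\ $\boldsymbol{0}$ in $M/A$) may or may not be cyclic flats depending on the loop/coloop cases, and that this never affects comparability. Your approach is longer but buys several things the paper leaves implicit: the stand-alone facts $\mathcal{F}(M|B)=\mathcal{F}(M)\cap[\boldsymbol{0},B]$ and $\mathcal{F}(M/A)=\{F/A: F\in\mathcal{F}(M),\,F\geq A\}$, which are of independent interest; an explicit verification that $\mathcal{F}(\cdot)$ is a sublattice and that $X\mapsto X/A$, being the restriction of the canonical lattice isomorphism $[A,B]\to\mathcal{L}(B/A)$, really is a lattice isomorphism (the paper disposes of this with ``so must the free separators be''); and a cleaner treatment of the identification of $M[A,B]$ with $(M|B)/A$, which the paper's statement glosses over. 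The paper's argument, by contrast, is shorter and self-contained at that point in the text, at the cost of a terser justification of why $\mathrm{cl}(\mathrm{cyc}(X))$ or $\mathrm{cyc}(\mathrm{cl}(X))$ must escape $[A,B]$ for a putative new cyclic flat.
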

    \begin{proof}
        It suffices to show $\mathcal{Z}(M[A,B])\backslash\{A,B\}\subseteq\mathcal{Z}(M)\cap[A,B]$ since $A$ and $B$ are clearly in $[A,B]_\mathcal{F}$ and $\mathcal{F}(M[A,B])$, and it is clear that $\mathcal{Z}(M)\cap[A,B]\subseteq\mathcal{Z}(M[A,B])$.
    
        Suppose there exists $X\in\mathcal{Z}(M[A,B])\backslash(\mathcal{Z}(M)\cap[A,B])$. Then either $\bar{X}_1:=\text{cl}(\text{cyc}(X))\notin[A,B]$ or $\bar{X}_2:=\text{cyc}(\text{cl}(X))\notin[A,B]$. Since $A$ and $B$ are free separators, we must have $\bar{X}_i<A$ or $\bar{X}_i>B$ for $i=1$ or 2. Therefore, we have $X<\bar{X}_i$ or $\bar{X}_i<X$. Since $X$ is a cyclic flat in $M[A,B]$, we thus have that $X=A$ or $B$, because all other such cyclic flats $Y$ in $[A,B]$ would give $\text{cl}(Y)=\text{cyc}(Y)=Y$.
        Since all cyclic flats not equal to $A$ or $B$ are in one-to-one correspondence, so must the free separators be. The result follows.
    \end{proof}

    The following lemma is the $q$-analogue of \cite[Lemma 6.13]{crapo2005unique} and follows immediately from Proposition~\ref{prop:D(M)_of_uniform} and the definitions of $\mathcal{D}(M)$ and $\mathcal{F}(M)$.
    \begin{lemma}\label{lem:unif_lat_of_free_seps}
        The $q$-matroid $M$ on the lattice $\mathcal{L}(E)$ is uniform if and only if $\mathcal{F}(M)=\mathcal{L}(E)$.
    \end{lemma}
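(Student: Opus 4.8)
The plan is to reformulate both conditions as statements about the lattice of cyclic flats and then feed them into Proposition~\ref{prop:D(M)_of_uniform}. Concretely, I would show that the hypothesis $\mathcal{F}(M)=\mathcal{L}(E)$ is equivalent to $\mathcal{Z}(M)\subseteq\{\boldsymbol{0},E\}$, and that the latter is in turn equivalent to $\mathcal{D}(M)=\{\boldsymbol{0},\boldsymbol{1}\}$; the equivalence of this last condition with uniformity is exactly the content of Proposition~\ref{prop:D(M)_of_uniform}. Thus the whole lemma reduces to a short combinatorial bridge between free separators and the two extreme cyclic flats.

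For the implication $\mathcal{F}(M)=\mathcal{L}(E)\Rightarrow\mathcal{Z}(M)\subseteq\{\boldsymbol{0},E\}$, the key elementary observation is that a subspace $Z$ with $\boldsymbol{0}<Z<E$ is never comparable to every one-dimensional space. Indeed, since $Z\neq E$ we may pick a one-dimensional $x\nleq Z$; then $x\nleq Z$ by choice, while $Z\nleq x$ because $Z\leq x$ would force $\dim Z\leq 1$, hence $Z=x$, contradicting $x\nleq Z$. So $x$ and $Z$ are incomparable, and $x$ fails to be a free separator whenever such a $Z$ is a cyclic flat. Contrapositively, if every element of $\mathcal{L}(E)$ is a free separator then no cyclic flat can lie strictly between $\boldsymbol{0}$ and $E$, i.e.\ $\mathcal{Z}(M)\subseteq\{\boldsymbol{0},E\}$. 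Since $\mathcal{D}(M)$ is generated from $\mathcal{Z}(M)$ by sums and intersections, and sums and intersections of members of $\{\boldsymbol{0},E\}$ stay in $\{\boldsymbol{0},E\}$, this yields $\mathcal{D}(M)\subseteq\{\boldsymbol{0},\boldsymbol{1}\}$.

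For the converse I would run the same comparison in the easy direction: if $M$ is uniform then by Lemma~\ref{lem:cyc_flats_uniform} (in the range $0<r(M)<n$) we have $\mathcal{Z}(M)=\{\boldsymbol{0},E\}$, and since every $A\in\mathcal{L}(E)$ satisfies $\boldsymbol{0}\leq A\leq E$, every subspace is comparable to each cyclic flat, so $\mathcal{F}(M)=\mathcal{L}(E)$. Combining the two reductions with Proposition~\ref{prop:D(M)_of_uniform} then closes the loop and gives the stated equivalence.

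The main obstacle, and the place to be careful, is the degenerate uniform cases $r(M)=0$ and $r(M)=n$, which are not covered by Lemma~\ref{lem:cyc_flats_uniform} and for which $\mathcal{Z}(M)$ is the singleton $\{E\}$ respectively $\{\boldsymbol{0}\}$ rather than $\{\boldsymbol{0},E\}$; correspondingly $\mathcal{D}(M)$ is a single element and not literally $\{\boldsymbol{0},\boldsymbol{1}\}$. I would dispatch these directly rather than leaning on the word ``non-trivial'' in Proposition~\ref{prop:D(M)_of_uniform}: a singleton $\mathcal{Z}(M)$ still satisfies $\mathcal{Z}(M)\subseteq\{\boldsymbol{0},E\}$, every subspace is trivially comparable to the unique cyclic flat so $\mathcal{F}(M)=\mathcal{L}(E)$, and conversely whenever $\mathcal{Z}(M)\subseteq\{\boldsymbol{0},E\}$ the rank formula of Lemma~\ref{lem:rank_function_cyclic_flats} evaluates to $r(A)=\min\{\dim A,\,r(E)\}$, whence $M\cong\mathcal{U}_{r(E),n}(q)$ in every case. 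This makes the ``follows immediately'' reading fully rigorous while keeping the argument to a couple of lines.
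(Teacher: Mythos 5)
Your proposal is correct and takes essentially the same route as the paper, which offers no written proof but declares the lemma to ``follow immediately'' from Proposition~\ref{prop:D(M)_of_uniform} and the definitions of $\mathcal{D}(M)$ and $\mathcal{F}(M)$ --- precisely the bridge $\mathcal{F}(M)=\mathcal{L}(E)\iff\mathcal{Z}(M)\subseteq\{\boldsymbol{0},E\}$ that you make explicit. Your extra care with the degenerate cases $r(M)\in\{0,\dim E\}$, handled directly via the rank formula of Lemma~\ref{lem:rank_function_cyclic_flats} rather than through Lemma~\ref{lem:cyc_flats_uniform} (which assumes $0<k<n$), is a legitimate and worthwhile tightening of the paper's one-line justification, not a departure from it.
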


     The next result, Lemma~\ref{lem:prim_flag_free_seps}, is the $q$-analogue of \cite[Proposition 6.15]{crapo2005unique}. Our proof does not significantly differ from that in \cite{crapo2005unique}, and therefore we omit it. We include Lemmas \ref{lem:unif_lat_of_free_seps} and \ref{lem:prim_flag_free_seps} because their Boolean analogues in \cite{crapo2005unique} are used in the proof of \cite[Theorem 6.16]{crapo2005unique}, whose $q$-analogue is Theorem~\ref{thm:unique_prim_factorisation}.

    \begin{lemma}\label{lem:prim_flag_free_seps}
        If the $q$-matroid $M$ has the primary flag $T_0<\dots< T_k$, then the lattice $\mathcal{F}(M)$ of free separators is equal to the union $\bigcup_{i=1}^k[T_{i-1},T_i]_\mathcal{F}$, where each interval $[T_{i-1},T_i]_\mathcal{F}$ is a sublattice of $\mathcal{L}(E)$ with
        $$[T_{i-1},T_i]_\mathcal{F}=\left\{\begin{array}{cc}
           [T_{i-1},T_i]  &  \textup{if }T_i\textup{ covers }T_{i-1}\textup{ in }\mathcal{D}(M),\\
           \{T_{i-1},T_i\}  & \textup{otherwise,}
        \end{array}\right.$$
        for $i\in \{1,\dots,k\}$.
    \end{lemma}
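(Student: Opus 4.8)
The plan is to reduce the whole statement to the interval minors $M[T_{i-1},T_i]$ and to the pinchpoint characterisation of irreducibility. The first step I would carry out is the observation that $A\in\mathcal{L}(E)$ is a free separator of $M$ if and only if $A$ is comparable to every element of $\mathcal{D}(M)$: if $A$ is comparable to every cyclic flat, then splitting any $S\subseteq\mathcal{Z}$ into $\{Z:Z\leq A\}$ and $\{Z:Z\geq A\}$ shows that both $\sum_{Z\in S}Z$ and $\bigcap_{Z\in S}Z$ are comparable to $A$, while the converse is immediate from $\mathcal{Z}\subseteq\mathcal{D}(M)$. Consequently every pinchpoint $T_j$ of $\mathcal{D}(M)$ is itself a free separator, and every free separator is comparable to each $T_j$. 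Since $T_0<\dots<T_k$ is a chain, each $A\in\mathcal{F}(M)$ lies in exactly one interval $[T_{i-1},T_i]$, which yields $\mathcal{F}(M)=\bigcup_{i=1}^k[T_{i-1},T_i]_\mathcal{F}$.

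Next I would fix $i$ and exploit that $T_{i-1},T_i\in\mathcal{F}(M)$, so that Lemma~\ref{lem:free_sep_isom} identifies $[T_{i-1},T_i]_\mathcal{F}$ with $\mathcal{F}(M[T_{i-1},T_i])$ as lattices via $X\mapsto X/T_{i-1}$. The technical heart of the argument is then to establish the auxiliary equality $\mathcal{D}(M[T_{i-1},T_i])=\mathcal{D}(M)\cap[T_{i-1},T_i]$. The inclusion $\subseteq$ follows because the cyclic flats of the minor lie in $\mathcal{D}(M)$ and $\mathcal{D}(M)$ is a sublattice. For $\supseteq$, given $D=\sum_{Z\in S}Z\in[T_{i-1},T_i]$, one uses that each $Z$ is comparable to the free separators $T_{i-1}$ and $T_i$ to discard the summands below $T_{i-1}$ (they are absorbed by $T_{i-1}$) and to rewrite $D=T_{i-1}+\sum_{Z\geq T_{i-1}}Z$ as a sum of cyclic flats of the minor, with the order-dual argument handling a full intersection.

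With this equality in hand, the dichotomy falls out. If $T_i$ covers $T_{i-1}$ in $\mathcal{D}(M)$, then $\mathcal{D}(M[T_{i-1},T_i])=\{T_{i-1},T_i\}$, so the minor is uniform by Proposition~\ref{prop:D(M)_of_uniform}, hence $\mathcal{F}(M[T_{i-1},T_i])$ is the whole interval lattice by Lemma~\ref{lem:unif_lat_of_free_seps}, giving $[T_{i-1},T_i]_\mathcal{F}=[T_{i-1},T_i]$. If $T_i$ does not cover $T_{i-1}$, the minor is non-uniform; moreover any non-trivial pinchpoint of $\mathcal{D}(M[T_{i-1},T_i])$ would, through the equality above together with comparability to the elements of $\mathcal{D}(M)$ lying below $T_{i-1}$ or above $T_i$, be a pinchpoint of $\mathcal{D}(M)$ strictly between the consecutive pinchpoints $T_{i-1}$ and $T_i$, which is impossible. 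Thus the minor has no non-trivial pinchpoint, so it is irreducible by Theorem~\ref{thm:no_pinchp_iff_irred} and has no non-trivial free separator by Corollary~\ref{thm:irred_iff_no_free_sep}, whence $[T_{i-1},T_i]_\mathcal{F}=\{T_{i-1},T_i\}$. In both cases $[T_{i-1},T_i]_\mathcal{F}$ is a sublattice of $\mathcal{L}(E)$, as required.

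The step I expect to be the main obstacle is the auxiliary equality $\mathcal{D}(M[T_{i-1},T_i])=\mathcal{D}(M)\cap[T_{i-1},T_i]$, because $\mathcal{D}$ is not defined relative to minors and a sum (or intersection) of cyclic flats of $M$ landing inside the interval need not a priori be expressible using only cyclic flats of the minor. The delicate direction is precisely the re-expression argument, and it is here that the free-separator property of the consecutive pinchpoints $T_{i-1},T_i$ is used in an essential way to split the generating sums and intersections.
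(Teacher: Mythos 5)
The paper omits its own proof of this lemma (deferring to the matroid argument of Crapo--Schmitt), and your proof is a correct reconstruction along exactly the intended lines, using precisely the supporting results the paper assembles for this purpose: Lemmas~\ref{lem:free_sep_isom} and~\ref{lem:unif_lat_of_free_seps}, Proposition~\ref{prop:D(M)_of_uniform}, and Theorem~\ref{thm:no_pinchp_iff_irred} together with Corollary~\ref{thm:irred_iff_no_free_sep}, preceded by the observation that free separators are exactly the subspaces comparable to every element of $\mathcal{D}(M)$. The step you single out as delicate, the identity $\mathcal{D}(M[T_{i-1},T_i])=\mathcal{D}(M)\cap[T_{i-1},T_i]$, does go through as you describe; note only that its inclusion $\mathcal{D}(M[T_{i-1},T_i])\subseteq\mathcal{D}(M)\cap[T_{i-1},T_i]$ need not rely on the asserted sublattice property of $\mathcal{D}(M)$, because the cyclic flats of the minor differ from $\mathcal{Z}(M)\cap[T_{i-1},T_i]$ at most in the endpoints $T_{i-1},T_i$, and any sum (resp.\ intersection) of cyclic flats of the minor involving an endpoint collapses by absorption to a sum (resp.\ intersection) of cyclic flats of $M$ or to the endpoint itself, which lies in $\mathcal{D}(M)$ as a pinchpoint.
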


    Note that the following \emph{primary factorisation} is made possible due to an iterative application of Theorem~\ref{thm:free_sep_free_prod}, as well as the associativity of the free product (Proposition~\ref{prop:assoc_dir_prod}).

    \begin{definition}
        Let $M$ be a $q$-matroid with primary flag $T_0<\dots< T_k$. The minor $M[T_{i-1},T_i]$ is a \textbf{primary factor} of $M$, and the factorisation $M=M[T_0,T_1]\square\dots\square M[T_{k-1},T_k]$ is the \textbf{primary factorisation} of $M$.
    \end{definition}

    The following is the $q$-analogue of \cite[Theorem 6.16]{crapo2005unique}. By changing the characterisation of a uniform matroid $M$ from one that has $\mathcal{F}(M)$ equal to a Boolean lattice, to one such that $\mathcal{F}(M)$ is isomorphic to an interval of the ambient lattice $\mathcal{L}(E)$ (given in Lemma~\ref{lem:unif_lat_of_free_seps}), it is a direct $q$-analogue of \cite[Theorem 6.16]{crapo2005unique}. We will provide a proof for clarity and self-containment. While the overall argument we provide here follows the result of \cite{crapo2005unique}, we remark that different approaches were required here to obtain the preceding results applied in its proof.

    \begin{theorem}\label{thm:unique_prim_factorisation}
        The sequence of primary factors of a $q$-matroid $M$ is the unique sequence $M_1,\dots,M_k$ such that $M=M_1\square\cdots\square M_k$ where each $M_i$ is either uniform or irreducible, and no free product of consecutive elements in the sequence is uniform.
    \end{theorem}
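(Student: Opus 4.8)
The plan is to prove both halves of the statement by translating each admissible factorisation $M=M_1\square\cdots\square M_\ell$ into a chain of free separators and then reading off all constraints from the description of $\mathcal{F}(M)$ in Lemma~\ref{lem:prim_flag_free_seps}. The key preliminary observation is that, by Theorem~\ref{thm:free_sep_free_prod}, writing $M=M_1\square(M_2\square\cdots\square M_\ell)$ exhibits a free separator $W_1$ with $M_1\cong M[\boldsymbol{0},W_1]$; iterating inside $M/W_1\cong M[W_1,E]$ and using Lemma~\ref{lem:free_sep_isom} to lift free separators of minors back into $\mathcal{F}(M)$, every factorisation corresponds to a chain $\boldsymbol{0}=W_0<W_1<\cdots<W_\ell=E$ of free separators with $M_j\cong M[W_{j-1},W_j]$, and conversely. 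The problem thus becomes combinatorial in the lattice $\mathcal{F}(M)=\bigcup_{i=1}^k[T_{i-1},T_i]_{\mathcal{F}}$ of Lemma~\ref{lem:prim_flag_free_seps}: every free separator lies in some $[T_{i-1},T_i]$ and is therefore comparable to each pinchpoint $T_0<\cdots<T_k$, and each $T_i$ is itself a free separator.

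For \textbf{existence}, I would first recall that $M$ equals its primary factorisation (already available from Theorem~\ref{thm:free_sep_free_prod} and associativity, Proposition~\ref{prop:assoc_dir_prod}). By Lemma~\ref{lem:free_sep_isom} we have $\mathcal{F}(M[T_{i-1},T_i])\cong[T_{i-1},T_i]_{\mathcal{F}}$, and Lemma~\ref{lem:prim_flag_free_seps} says this is either the full interval $[T_{i-1},T_i]$ or just its two endpoints. In the first case $M[T_{i-1},T_i]$ is uniform by Lemma~\ref{lem:unif_lat_of_free_seps}; in the second it is irreducible by Corollary~\ref{thm:irred_iff_no_free_sep}. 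To see that no consecutive product $M[T_{i-1},T_i]\square M[T_i,T_{i+1}]=M[T_{i-1},T_{i+1}]$ is uniform, I would show that $T_i$ contributes a non-trivial element to $\mathcal{D}(M[T_{i-1},T_{i+1}])$: since $T_i\in\mathcal{D}(M)$ it is either a join of cyclic flats below it or a meet of cyclic flats above it, and using that $T_{i-1}$ (resp. $T_{i+1}$) is a free separator one checks $T_i=\sum\{Z\in\mathcal{Z}(M):T_{i-1}\le Z\le T_i\}$ (resp. the dual meet form), a combination of cyclic flats of the minor. Hence $\mathcal{D}(M[T_{i-1},T_{i+1}])\neq\{\boldsymbol{0},\boldsymbol{1}\}$, so Proposition~\ref{prop:D(M)_of_uniform} shows the product is not uniform.

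For \textbf{uniqueness}, given an admissible chain $\boldsymbol{0}=W_0<\cdots<W_\ell=E$ I would prove $\{W_0,\dots,W_\ell\}=\{T_0,\dots,T_k\}$ by two inclusions. First, no pinchpoint $T_i$ lies strictly inside an interval $[W_{j-1},W_j]$: were it to, $T_i/W_{j-1}$ would be a non-trivial free separator of $M_j=M[W_{j-1},W_j]$ by Lemma~\ref{lem:free_sep_isom}, so $M_j$ is reducible (Corollary~\ref{thm:irred_iff_no_free_sep}) and hence uniform by admissibility; but the same computation as above gives $T_i/W_{j-1}\in\mathcal{D}(M_j)\setminus\{\boldsymbol{0},\boldsymbol{1}\}$, contradicting Proposition~\ref{prop:D(M)_of_uniform}. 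So each $T_i$ is some $W_j$. Second, no cut point $W_j$ lies strictly inside a primary interval $[T_{i-1},T_i]$: if $M[T_{i-1},T_i]$ is irreducible this again contradicts Corollary~\ref{thm:irred_iff_no_free_sep}, while if $M[T_{i-1},T_i]$ is uniform then (using that all $T_i$ are already cut points) $W_{j-1},W_{j+1}\in[T_{i-1},T_i]$, and the consecutive product $M_j\square M_{j+1}=M[W_{j-1},W_{j+1}]$ is a minor of a uniform $q$-matroid. Its only cyclic flats are its endpoints, by the cyclic-flat description established in the proof of Lemma~\ref{lem:free_sep_isom} together with $\mathcal{Z}(M[T_{i-1},T_i])=\{T_{i-1},T_i\}$, so by Lemma~\ref{lem:cyc_flats_uniform} it is itself uniform, contradicting admissibility. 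Combining both inclusions forces $\ell=k$ and $W_j=T_j$, so the admissible factorisation is the primary one.

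The \textbf{main obstacle} is the bookkeeping between $\mathcal{D}(M)$ and the $\mathcal{D}$-lattice of a minor. The crux of both directions is the claim that a pinchpoint $T_i$ sitting strictly inside an interval whose endpoints are free separators is expressible as a join (or meet) of cyclic flats of that minor, so that it persists in $\mathcal{D}$ of the minor and blocks uniformity via Proposition~\ref{prop:D(M)_of_uniform}. Establishing $T_i=\sum\{Z\in\mathcal{Z}(M):T_{i-1}\le Z\le T_i\}$ and its dual meet form using only that the interval endpoints are free separators is the delicate technical point; once this is in hand, everything else is a direct application of the structural results on $\mathcal{F}(M)$ and $\mathcal{D}(M)$.
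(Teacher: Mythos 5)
Your proposal is correct and follows essentially the same route as the paper's proof: both identify admissible factorisations with chains of free separators and compare them to the primary flag via Lemma~\ref{lem:free_sep_isom}, Lemma~\ref{lem:prim_flag_free_seps}, Proposition~\ref{prop:D(M)_of_uniform} and Corollary~\ref{thm:irred_iff_no_free_sep}, with your two inclusions corresponding exactly to the paper's two cases ($\mathcal{T}\not\subseteq\mathcal{U}$ and $\mathcal{T}\subsetneq\mathcal{U}$). The only difference is that you spell out details the paper leaves implicit, namely that a pinchpoint lying strictly inside an interval of free separators persists as a non-trivial element of $\mathcal{D}$ of the corresponding minor (via the join/meet expression over cyclic flats), and that interval minors of uniform $q$-matroids are themselves uniform.
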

    \begin{proof}
        Let $M=M_1\square\cdots\square M_\ell$, and let 
        $\mathcal{U}=\{U_0,\dots U_\ell\}$ be the corresponding set of free separators such that $U_{i-1} < U_i$ and $M_i=M[U_{i-1},U_i]$ for $i \in \{1,\dots,\ell\}$. Let $\mathcal{T}=\{T_0,\dots,T_k\}$, with $T_{j-1}<T_j$ for $j\in\{1,\dots,k\}$ be the primary flag of $M$. We will show that the statement of the theorem is satisfied if and only if $\mathcal{U}=\mathcal{T}$.

        Suppose first that $\mathcal{U}=\mathcal{T}$. By Lemma~\ref{lem:free_sep_isom} we have $\mathcal{F}(M_i)=\mathcal{F}(M[T_{i-1},T_i])\cong[T_{i-1},T_i]_\mathcal{F}$, for $i\in \{1,\dots,k\}$. By Lemma~\ref{lem:prim_flag_free_seps}, combined with Proposition~\ref{prop:D(M)_of_uniform} and Theorem~\ref{thm:no_pinchp_iff_irred}, we deduce that $M_i$ is either uniform, or irreducible. For $i\in \{1,\dots,k-1\}$ we have $\mathcal{F}(M_i\square M_{i+1})\cong[T_{i-1},T_{i+1}]_\mathcal{F}$, which has a non-trivial pinchpoint at $T_i$. Proposition~\ref{prop:D(M)_of_uniform} then gives us that $M_i\square M_{i+1}$ is not uniform.

        For the converse, suppose first that $\mathcal{T}$ is not a subset of $\mathcal{U}$. Since $\mathcal{U}$ is composed of free separators, there then must exist $T_j\in[U_i,U_{i+1}]_\mathcal{F}\backslash\{U_i,U_{i+1}\}$ for some $i,j$. This means that $M_i$ is neither uniform nor irreducible.

        Finally, suppose that $\mathcal{T}$ is a proper subset of $\mathcal{U}$. Then there exists $U_j\in[T_i,T_{i+1}]_\mathcal{F}\backslash\{T_i,T_{i+1}\}$ for some $i,j$. By Lemma~\ref{lem:free_sep_isom} we must have that $M[T_i,T_{i+1}]$ is uniform. Since $\mathcal{T}\subseteq\mathcal{U}$, we must have that $T_i\leq U_{j-1}$ and $U_{j+1}\leq T_{i+1}$, which means that $M_j\square M_{j+1}$ is a minor of $M[T_i,T_{i+1}]$, which means that it must be uniform.
    \end{proof}

    As in the matroid case, we have thus shown that any $q$-matroid factors uniquely into minors that are either irreducible or maximally uniform (i.e. maximal with respect to inclusion among the minors).

    The following two results are the $q$-analogues of \cite[Theorem 6.17, Theorem 6.18]{crapo2005unique}, which we state here as corollaries (of Theorem~\ref{thm:unique_prim_factorisation}), and require no further proof than what is given in \cite{crapo2005unique}.

    \begin{corollary}
        If $M\cong M_1\square\cdots\square M_k\cong N_1\square\cdots\square N_r$, where each factor is irreducible, then $k=r$ and $M_i=N_i$ for all $i\in \{1,\dots,k\}$.
    \end{corollary}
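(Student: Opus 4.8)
The plan is to translate the hypothesis into the language of free separators and then show that any factorisation of $M$ into irreducibles corresponds to a \emph{maximal} chain in the lattice $\mathcal{F}(M)$ of free separators, after which uniqueness follows from the structure of $\mathcal{F}(M)$ given by the primary flag. First I would observe that an isomorphism $M\cong M_1\square\cdots\square M_k$ determines, via Theorem~\ref{thm:union_of_cycflats} (or equivalently Theorem~\ref{thm:free_sep_free_prod}) together with the associativity of the free product (Proposition~\ref{prop:assoc_dir_prod}), a chain of free separators $\boldsymbol{0}=A_0<A_1<\cdots<A_k=E$ with $M[A_{i-1},A_i]\cong M_i$: indeed, the subspaces $E_1\oplus\cdots\oplus E_j\oplus\boldsymbol{0}$ are free separators of the free product, and one transports them through the isomorphism. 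By Corollary~\ref{thm:irred_iff_no_free_sep} and Lemma~\ref{lem:free_sep_isom}, the factor $M_i$ is irreducible exactly when $[A_{i-1},A_i]_\mathcal{F}=\{A_{i-1},A_i\}$. Hence a factorisation into irreducibles corresponds precisely to a maximal chain in $\mathcal{F}(M)$, and the corollary reduces to proving that every maximal chain of free separators yields the same ordered list of isomorphism types of minors.

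Next I would exploit the description of $\mathcal{F}(M)$ provided by the primary flag $T_0<\cdots<T_m$. By Lemma~\ref{lem:prim_flag_free_seps} we have $\mathcal{F}(M)=\bigcup_{i=1}^m[T_{i-1},T_i]_\mathcal{F}$, so every free separator lies in some $[T_{i-1},T_i]$; consequently each pinchpoint $T_j$ is comparable to all free separators and therefore lies on every maximal chain. Thus any maximal chain of free separators passes through all the $T_j$ and refines the primary factorisation. On an interval where $M[T_{i-1},T_i]$ is irreducible, Lemma~\ref{lem:prim_flag_free_seps} gives $[T_{i-1},T_i]_\mathcal{F}=\{T_{i-1},T_i\}$, contributing the single irreducible factor $M[T_{i-1},T_i]$. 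On an interval where $M[T_{i-1},T_i]\cong\mathcal{U}_{a,b}(q)$ is uniform, Lemma~\ref{lem:unif_lat_of_free_seps} (via Lemma~\ref{lem:free_sep_isom}) shows that $[T_{i-1},T_i]_\mathcal{F}$ is the full subspace interval $[T_{i-1},T_i]$, so a maximal chain refines it into a complete flag.

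The crux is to check that the refinement of a uniform factor is forced up to isomorphism. For any complete flag $T_{i-1}=W_0<W_1<\cdots<W_b=T_i$ inside $[T_{i-1},T_i]$, the minor $M[W_{j-1},W_j]$ is a one-dimensional $q$-matroid whose rank equals the rank increment of $\mathcal{U}_{a,b}(q)$; since that rank function is $j\mapsto\min(a,j)$, this increment is $1$ for $j\le a$ and $0$ for $j>a$, independently of the chosen flag. Hence along any complete flag the irreducible factors are $\mathcal{U}_{1,1}(q)$ repeated $a$ times followed by $\mathcal{U}_{0,1}(q)$ repeated $b-a$ times. Combining the three observations, every maximal chain of free separators produces the same ordered list of irreducible minors up to isomorphism; applying this to both of the given factorisations yields $k=r$ and $M_i\cong N_i$ for all $i$, as claimed. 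The main obstacle is the bookkeeping of the first step, namely establishing the precise dictionary between irreducible factorisations and maximal chains of free separators, whereas the forced refinement of uniform factors is a short rank computation.
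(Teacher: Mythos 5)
Your argument is correct, but it takes a genuinely different route from the paper's. The paper gives no standalone proof: it presents this statement as an immediate corollary of Theorem~\ref{thm:unique_prim_factorisation}, to be derived exactly as in Crapo--Schmitt --- in outline, one groups maximal runs of consecutive irreducible factors whose free product is uniform, checks that the resulting blocks satisfy the hypotheses of that theorem, invokes its uniqueness assertion to identify the blocks of both factorisations with the primary factors, and finishes with the fact that a uniform factor splits into irreducibles in only one way. You bypass Theorem~\ref{thm:unique_prim_factorisation} altogether and argue directly: irreducible factorisations correspond to maximal chains in $\mathcal{F}(M)$ (via Theorem~\ref{thm:union_of_cycflats}, Proposition~\ref{prop:assoc_dir_prod}, Theorem~\ref{thm:free_sep_free_prod}, Corollary~\ref{thm:irred_iff_no_free_sep} and Lemma~\ref{lem:free_sep_isom}); every maximal chain contains the primary flag, since by Lemma~\ref{lem:prim_flag_free_seps} each pinchpoint $T_j$ is a free separator comparable to everything in $\mathcal{F}(M)$; and inside each primary interval the chain's contribution is forced --- a single irreducible factor when $[T_{i-1},T_i]_\mathcal{F}$ is trivial, and the fixed ordered list of $a$ copies of $\mathcal{U}_{1,1}(q)$ followed by $b-a$ copies of $\mathcal{U}_{0,1}(q)$ when $M[T_{i-1},T_i]\cong\mathcal{U}_{a,b}(q)$, by your rank-increment computation. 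Both arguments rest on the same supporting lemmas, but yours is self-contained and makes explicit the one ingredient the paper leaves entirely to the citation, namely the forced refinement of uniform factors (which is precisely why the order of coloop-like and loop-like factors cannot be permuted, the free product being noncommutative); what the paper's route buys is brevity, since all the work is already packaged in Theorem~\ref{thm:unique_prim_factorisation}. Two cosmetic corrections: in the irreducible case the citation should be Corollary~\ref{thm:irred_iff_no_free_sep} together with Lemma~\ref{lem:free_sep_isom} rather than Lemma~\ref{lem:prim_flag_free_seps} (the latter is instead what makes your two cases exhaustive, via Lemma~\ref{lem:unif_lat_of_free_seps}), and the conclusion $M_i=N_i$ should be read as $M_i\cong N_i$, since the factors live on abstract ground spaces.
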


    \begin{corollary}\label{cor:two-prod_isom}
        Let $M_1,M_2,N_1,N_2$ be $q$-matroids on the spaces $E_1,E_2,E_1',E_2'$.
        Suppose that $M_1\square M_2\cong N_1\square N_2$ and $E_1\cong E_1'$. Then $M_1\cong N_1$ and $M_2\cong N_2$.
    \end{corollary}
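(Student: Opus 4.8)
The plan is to reduce the statement to the uniqueness of the factorisation into irreducibles (the preceding corollary) combined with an elementary dimension count. First I would factor each of the four $q$-matroids into irreducible components. Existence of such a factorisation is immediate by induction on dimension: if a $q$-matroid is reducible then it has a non-trivial free separator (Corollary~\ref{thm:irred_iff_no_free_sep}), so it splits as a free product via Theorem~\ref{thm:free_sep_free_prod}, and one recurses on the two lower-dimensional factors. Writing $M_1\cong A_1\square\cdots\square A_p$, $M_2\cong A_{p+1}\square\cdots\square A_n$, $N_1\cong B_1\square\cdots\square B_s$ and $N_2\cong B_{s+1}\square\cdots\square B_m$ with all $A_i,B_j$ irreducible, associativity of the free product (Proposition~\ref{prop:assoc_dir_prod}) lets me concatenate these to obtain $M_1\square M_2\cong A_1\square\cdots\square A_n$ and $N_1\square N_2\cong B_1\square\cdots\square B_m$, i.e.\ two irreducible factorisations of the same $q$-matroid up to isomorphism.

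Next I would invoke the preceding corollary (uniqueness of the irreducible factorisation) applied to $M_1\square M_2\cong N_1\square N_2$. Since the free product is non-commutative, this gives not merely a multiset equality but an order-preserving matching: $n=m$ and $A_i\cong B_i$ for every $i\in\{1,\dots,n\}$. In particular, isomorphic $q$-matroids have ground spaces of equal dimension, so setting $d_i$ to be the common dimension of the ground spaces of $A_i$ and $B_i$, the partial sums $0,\,d_1,\,d_1+d_2,\dots$ record the dimensions at which the chain of free separators cuts the ambient space.

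The remaining, and essentially only non-trivial, step is to show that the cut separating the $M_1$-factors from the $M_2$-factors coincides with the one separating the $N_1$-factors from the $N_2$-factors, i.e.\ that $p=s$; this is exactly where the hypothesis $E_1\cong E_1'$ enters. Because the ground space of a free product is the direct sum of the ground spaces of its factors, $\dim E_1=\sum_{i=1}^p d_i$ and $\dim E_1'=\sum_{i=1}^s d_i$. Each irreducible factor has a ground space of dimension at least $1$, so the sequence of partial sums is strictly increasing and its terms are pairwise distinct; as $\dim E_1=\dim E_1'$ by hypothesis, this forces $p=s$. Finally, since the construction \eqref{eq:independent} depends only on the isomorphism classes of the factors, $A_i\cong B_i$ for $i\le p$ yields $M_1\cong A_1\square\cdots\square A_p\cong B_1\square\cdots\square B_p\cong N_1$, and the remaining factors give $M_2\cong N_2$. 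The main obstacle I anticipate is ensuring this alignment argument is airtight: one must confirm that the irreducible factors genuinely have positive-dimensional ground spaces (so the partial sums strictly increase) and that the uniqueness corollary really supplies an ordered correspondence, both of which follow from the ordered nature of the primary/irreducible factorisation established above.
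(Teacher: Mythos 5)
Your proposal is correct and is essentially the argument the paper relies on: the paper obtains this corollary from Theorem~\ref{thm:unique_prim_factorisation} via the preceding corollary on uniqueness of irreducible factorisations, exactly as in Crapo--Schmitt, which is what you reconstruct (factor each $M_i$, $N_i$ into irreducibles, concatenate by associativity, match the factors termwise in order, and use $\dim E_1=\dim E_1'$ to locate the cut). The two obstacles you flag are indeed non-issues: irreducible factors are non-trivial by definition, so the partial dimension sums strictly increase, and the uniqueness corollary does supply an ordered (index-by-index) correspondence.
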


    By Corollary~\ref{cor:two-prod_isom} we can derive a $q$-analogue of \cite[Corollary 9]{crapo2005free}, which gives a recursive lower bound for the number of isomorphism classes of matroids on a set with $n$ elements. The proof does not differ significantly from the proof found in \cite{crapo2005free}, so we omit it.

    \begin{corollary}
        Let $\mathcal{M}_{q,n}$ denote the set of isomorphism classes of $q$-matroids on the vector space $\mathbb{F}_q^n$. We have $|\mathcal{M}_{q,n}|\geq |\mathcal{M}_{q,n_1}|\cdot |\mathcal{M}_{q,n_2}|$ if $n=n_1+n_2$.
    \end{corollary}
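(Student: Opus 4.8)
The plan is to prove the bound $|\mathcal{M}_{q,n}| \geq |\mathcal{M}_{q,n_1}| \cdot |\mathcal{M}_{q,n_2}|$ by exhibiting an injective map from ordered pairs of isomorphism classes into isomorphism classes of $q$-matroids on $\mathbb{F}_q^n$. The natural candidate is the map induced by the free product: given isomorphism classes represented by $q$-matroids $M_1$ on $\mathbb{F}_q^{n_1}$ and $M_2$ on $\mathbb{F}_q^{n_2}$, send the pair $(M_1, M_2)$ to the isomorphism class of $M_1 \square M_2$, which lives on $\mathbb{F}_q^{n_1} \oplus \mathbb{F}_q^{n_2} \cong \mathbb{F}_q^n$. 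The whole argument then reduces to checking that this assignment is well-defined on isomorphism classes and injective.

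First I would verify well-definedness: if $M_1 \cong M_1'$ and $M_2 \cong M_2'$, then $M_1 \square M_2 \cong M_1' \square M_2'$. This is routine from the cryptomorphic description via cyclic flats in Theorem~\ref{thm:union_of_cycflats}, since an $\mathbb{F}_q$-isomorphism between the factors lifts to an isomorphism of the direct-sum ground spaces that carries the lattice of cyclic flats of one free product onto the other while preserving rank values. The key step is injectivity, and here the heavy lifting has already been done: Corollary~\ref{cor:two-prod_isom} states precisely that if $M_1 \square M_2 \cong N_1 \square N_2$ and the first ground spaces have equal dimension (which holds automatically here, since both are $\mathbb{F}_q^{n_1}$), then $M_1 \cong N_1$ and $M_2 \cong N_2$. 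Thus distinct ordered pairs of isomorphism classes map to distinct isomorphism classes, giving the injection and hence the inequality on cardinalities.

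The main obstacle is not mathematical depth but making sure the counting is genuinely over \emph{ordered} pairs and that the map lands in the correct set. Since the free product is noncommutative, the pair $(M_1, M_2)$ and $(M_2, M_1)$ can yield non-isomorphic products, so the source of the injection is $\mathcal{M}_{q,n_1} \times \mathcal{M}_{q,n_2}$ with the factors kept in a fixed order, which is exactly what the product $|\mathcal{M}_{q,n_1}| \cdot |\mathcal{M}_{q,n_2}|$ counts. One subtlety to flag is that Corollary~\ref{cor:two-prod_isom} requires $E_1 \cong E_1'$ as a hypothesis; in our setting both first factors are forced to have ground space $\mathbb{F}_q^{n_1}$ by construction, so this hypothesis is satisfied and no case analysis on dimensions is needed. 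As noted, this mirrors the matroid argument of \cite[Corollary 9]{crapo2005free} almost verbatim, with Corollary~\ref{cor:two-prod_isom} playing the role of its matroid counterpart, so I would keep the exposition brief and defer to that reference for the remaining bookkeeping.
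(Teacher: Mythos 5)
Your proposal is correct and matches the paper's intended argument: the paper omits the proof, noting only that it follows \cite[Corollary 9]{crapo2005free} via Corollary~\ref{cor:two-prod_isom}, which is exactly the injectivity-of-the-free-product-map argument you spell out. Your additional care about well-definedness on isomorphism classes and about the pairs being ordered is sound bookkeeping, not a deviation.
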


    To give an example of an irreducible $q$-matroid, we include the following definition, taken from \cite[Example 17]{byrne2022weighteddesigns}, which is the $q$-analogue of the well-known Vámos matroid.

    \begin{definition}\label{def:Vamos}
        Let $e_1,\dots,e_8$ be the canonical basis of $\mathbb{F}_q^8$. Define the set
        $$\mathcal{C}=\{\langle e_1,e_2,e_3,e_4\rangle,\langle e_1,e_2,e_5,e_6\rangle,\langle e_3,e_4,e_5,e_5\rangle,\langle e_3,e_4,e_7,e_8\rangle,\langle e_5,e_6,e_7,e_8\rangle\}.$$
        The \emph{Vámos $q$-matroid} is the $q$-matroid on the lattice $\mathcal{L}(\mathbb{F}_q^8)$ with rank function $r:\mathcal{L}(\mathbb{F}_q^8)\rightarrow\mathbb{Z}$ defined by
        $$r(A)=\left\{\begin{array}{cc}
             \text{dim}(A)& \text{dim}(A)\leq3 \\
             3& A\in\mathcal{C} \\
             4& \text{dim}(A)\geq4\textup{ and }A\notin\mathcal{C}.
        \end{array}\right.$$
    \end{definition}

    \begin{example}\label{ex:vamos_irred}
        It is clear that the lattice of cyclic flats of the Vámos $q$-matroid contains the set $\mathcal{C}$. It is easily observed that $\mathcal{C}$ is an antichain, that $\bigwedge\mathcal{C}=\boldsymbol{0}$, and that $\bigvee\mathcal{C}=\mathbb{F}_q^8$. Therefore, there are no non-trivial pinchpoints in its lattice of cyclic flats, which means that it is irreducible by Theorem~\ref{thm:no_pinchp_iff_irred}.
    \end{example}

    \begin{remark}
        Example~\ref{ex:vamos_irred} is not surprising, because the Vámos matroid has the same lattice of cyclic flats as the Vámos $q$-matroid. Therefore, if the Vámos $q$-matroid was reducible, then the Vámos matroid would also be reducible. This would mean that the Vámos matroid is the free product of two smaller matroids, which must then both be representable. By \cite[Proposition 4.13]{crapo2005unique}, this would imply that the Vámos matroid is representable, which famously is not the case.
    \end{remark}
    
    
    \section{Representability of the free product}\label{sec:representability}
    
In this section we study the representability of the free product of 
$\F_{q^m}$-representable $q$-matroids. Using the geometric point of view introduced in \cite{alfarano2022cyclic} for representable $q$-matroids, we focus on the free product of two uniform $q$-matroids, with particular attention to the case for which both factors have rank one.
It will be convenient for us to set $E_1=\langle e_1,\dots,e_{n_1} \rangle$
and $E_2 =E_1^\perp=\langle e_{n_1+1},\dots,e_{n_1+n_2} \rangle$, where $\{e_j : 1 \leq j \leq n_1+n_2\}$ is the standard basis of $\F_q^{n_1+n_2}$.


We recall a geometric description of representable $q$-matroids; see also \cite{ALFARANO2022105658,alfarano2022cyclic,alfarano2024representability}.

\begin{definition}
An $[n,k]_{q^m/q}$ \textbf{system} $\mS$ is an $n$-dimensional $\F_q$-subspace of $\F_{q^m}^k$, such that
$ \langle \mS \rangle_{\F_{q^m}}=\F_{q^m}^k$.
Two $[n,k]_{q^m/q}$ systems $\mS$ and $\mS'$ are called \textbf{equivalent} if there exists an $\F_{q^m}$-isomorphism $\tau\in\GL(k,\F_{q^m})$ such that $ \tau(\mS) = \mS'$.
If the parameters are not relevant or clear from the context, we simply say that $\mS$ is a $q$-\textbf{system}.
\end{definition}

\begin{definition}\label{def:rhos}
    For an $\F_q$-subspace $V\leq \F_{q^m}^k$, we define the \textbf{$\F_{q^m}$-rank of $V$} to be the integer
    $$r(V)=\dim_{\F_{q^m}}(\langle V \rangle_{\F_{q^m}}).$$
    We write $r_\mS$ to denote the restriction of the map 
    $r:\mL(\F_{q^m}^k) \longrightarrow \N_0$ to $\mL(\mS)$.
\end{definition}

It is not difficult to see that for a $q$-system $\mS$, $r_\mS$ is a rank function and hence  $(\mS,r_{\mS})$ defines the $q$-matroid $(\mS,r_{\mS})$.
The next geometric interpretation of a representable $q$-matroid in terms of $q$-systems was proved in \cite[Theorem 5.6]{alfarano2022cyclic}, whose original statement may look slightly different. However, it is equivalent to the following reformulation.

 \begin{theorem}[{\cite[Theorem 5.6]{alfarano2022cyclic}}]\label{thm:independentrank}
Let $G\leq \F_{q^m}^{k\times n}$ be a full rank matrix, and let $\mS_G$ be the $[n,k]_{q^m/q}$ system associated to it, i.e. the $\F_q$-span of the columns of $G$. Then the $q$-matroid $M[G]$ arising from $G$ is equivalent to the $q$-matroid $(\mS_G,r_{\mS_G})$. 
\end{theorem}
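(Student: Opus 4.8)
The plan is to show that the $\F_q$-isomorphism type of the rank function $r^G$ on $\mL(\F_q^n)$ coincides with that of $r_{\mS_G}$ on $\mL(\mS_G)$, via the natural linear bijection between these two subspace lattices. First I would fix a full rank matrix $G\in\F_{q^m}^{k\times n}$ and let $g_1,\dots,g_n\in\F_{q^m}^k$ denote its columns. Since $G$ has rank $k$, the columns span $\F_{q^m}^k$ over $\F_{q^m}$, so $\mS_G=\langle g_1,\dots,g_n\rangle_{\F_q}$ is indeed an $[n,k]_{q^m/q}$ system provided the columns are $\F_q$-linearly independent; I would first note that this is exactly the hypothesis that $G$ has full $\F_q$-column rank $n$ (implicit in $\mS_G$ having $\F_q$-dimension $n$), so the map $\sigma:\F_q^n\to\mS_G$ sending the $j$-th standard basis vector $\epsilon_j$ to $g_j$ is an $\F_q$-isomorphism.

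The key step is to verify that $\sigma$ intertwines the two rank functions: that is, $r^G(U)=r_{\mS_G}(\sigma(U))$ for every $U\in\mL(\F_q^n)$. I would argue as follows. Let $U\leq\F_q^n$ and let $A^U$ be a matrix whose columns form an $\F_q$-basis of $U$. By the definition in the excerpt, $r^G(U)=\rk(GA^U)$, where the rank is computed over $\F_{q^m}$. On the other hand, $\sigma(U)=\langle G\,u : u\in U\rangle_{\F_q}$ viewed as an $\F_q$-subspace of $\mS_G\leq\F_{q^m}^k$, and by Definition~\ref{def:rhos},
\[
r_{\mS_G}(\sigma(U))=\dim_{\F_{q^m}}\langle \sigma(U)\rangle_{\F_{q^m}}=\dim_{\F_{q^m}}\langle G u : u\in U\rangle_{\F_{q^m}}.
\]
The columns of $GA^U$ are precisely the images $Gu$ as $u$ ranges over a basis of $U$, and the $\F_{q^m}$-span of $\sigma(U)$ is spanned by these same vectors, so $\dim_{\F_{q^m}}\langle Gu:u\in U\rangle_{\F_{q^m}}=\rk(GA^U)$. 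Hence $r_{\mS_G}(\sigma(U))=r^G(U)$, which is the required identity. This identity is independent of the choice of basis $A^U$, since changing the basis multiplies $GA^U$ on the right by an invertible $\F_q$-matrix, which preserves $\F_{q^m}$-rank.

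Having established that $\sigma$ is an $\F_q$-isomorphism satisfying $r_{\mS_G}(\sigma(U))=r^G(U)$ for all $U$, I would conclude by invoking the definition of equivalence of $q$-matroids: $\sigma$ is exactly a rank-preserving $\F_q$-isomorphism from $(\F_q^n, r^G)=M[G]$ to $(\mS_G, r_{\mS_G})$, so the two $q$-matroids are equivalent. I expect the main subtlety to be the bookkeeping between the two realizations of $\sigma(U)$: the abstract $\F_q$-subspace of $\mS_G$ versus its $\F_{q^m}$-span inside $\F_{q^m}^k$, and making sure the rank computation in $r^G$ (via the matrix product $GA^U$) is matched cleanly with the dimension formula in Definition~\ref{def:rhos}. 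The rest is routine linear algebra; the only genuine content is recognizing that both sides compute the $\F_{q^m}$-dimension of the same span $\langle Gu : u\in U\rangle_{\F_{q^m}}$.
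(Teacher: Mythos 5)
Your proof is correct, and in fact the paper offers no proof of this statement at all --- it is imported by citation from \cite[Theorem 5.6]{alfarano2022cyclic} --- so your argument, which transports $r^G$ along the $\F_q$-isomorphism $\sigma: u \mapsto Gu$ and observes that $\rk(GA^U)$ and $\dim_{\F_{q^m}}\langle Gu : u\in U\rangle_{\F_{q^m}}$ compute the same quantity (independently of the chosen basis $A^U$), is exactly the natural direct verification underlying the cited result. You were also right to flag the one genuine subtlety: the injectivity of $\sigma$ requires $\dim_{\F_q}(\mS_G)=n$, i.e.\ the columns of $G$ are $\F_q$-independent (equivalently, $M[G]$ is loopless), which is implicitly packaged into the hypothesis that $\mS_G$ is an $[n,k]_{q^m/q}$ system --- consistent with Remark \ref{rem:Fqm-indep}, where the correspondence is stated for loopless $q$-matroids.
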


\begin{remark}\label{rem:Fqm-indep}
    Let $M=(\mS,r_{\mS})$ be the representable $q$-matroid arising from the $[n,k]_{q^m/q}$-system~$\mS$. We recall that in \cite{alfarano2022cyclic}, it has been shown that the independent spaces of $M$ are the \textbf{$\F_{q^m}$-independent subspaces} of $\mS$. An $\F_q$-subspace $I$ of $\mS$ is $\F_{q^m}$-independent if $r_\mS(I)=\dim_{\F_q}(I)$. 
In other words, Theorem \ref{thm:independentrank} characterises representable $q$-matroids as those coming from a $q$-system. More precisely, we say that a $q$-matroid $M$ of rank $k$ without loops is \textbf{$\Fm$-representable} if and only if it is equivalent to  a $q$-matroid $(\mS,r_\mS)$, for some $[n,k]_{q^m/q}$ system~$\mS$.
\end{remark}

We turn to the representability of the free product, with the following result.

    \begin{proposition}
        Let $M_1=(E_1,r_1)$ and $M_2=(E_2,r_2)$ be $q$-matroids of rank $k_1$ and $k_2$, respectively. If $M_1\square M_2$ is $\F_{q^m}$-representable, then $M_1$ and $M_2$ are $\F_{q^m}$-representable, and $M_1\square M_2=M[G]$ for some matrix 
        $$G=\begin{pmatrix}
            G_1&X\\
            0&G_2
        \end{pmatrix},$$
        where $G_1$ represents $M_1$, $G_2$ represents $M_2$, and $X \in \F_{q^m}^{k_1 \times n_2}$ is such that 
        $|\{ U \leq \mS_G : \dim_{\F_q}(U) = \dim_{\F_{q^m}}(\langle U\rangle_{\F_{q^m}}) \}|$ is maximal, over all such choices of $X$.
    \end{proposition}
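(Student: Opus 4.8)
The plan is to first produce a block upper-triangular representation by a change of basis, and then pin down the corner block through the weak-order maximality of the free product. Suppose $M_1\square M_2=M[G]$ for some full rank $k\times n$ matrix $G$ over $\F_{q^m}$, where $k=k_1+k_2$ and $n=n_1+n_2$. Since $(M_1\square M_2)|(E_1\oplus\boldsymbol{0})\cong M_1$ has rank $k_1$, the first $n_1$ columns of $G$ span a $k_1$-dimensional $\F_{q^m}$-subspace $W\leq\F_{q^m}^k$. Choosing $P\in\GL(k,\F_{q^m})$ with $P(W)=\langle e_1,\dots,e_{k_1}\rangle$ and using that $M[PG]=M[G]$ (left multiplication by an invertible matrix does not change $r^G(U)=\rk(GA^U)$), I may replace $G$ by $PG=\left(\begin{smallmatrix}G_1&X\\0&G_2\end{smallmatrix}\right)$ with $G_1\in\F_{q^m}^{k_1\times n_1}$, $G_2\in\F_{q^m}^{k_2\times n_2}$, $X\in\F_{q^m}^{k_1\times n_2}$; here $\rk(G_1)=k_1$ since its columns span $P(W)$, and $\rk(G_2)=k_2$ since $PG$ has full rank $k$.

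Next I would check that this block form represents both factors, and crucially that it does so for \emph{every} choice of the corner block. Writing $G(X):=\left(\begin{smallmatrix}G_1&X\\0&G_2\end{smallmatrix}\right)$, a column-basis computation gives $M[G(X)]|(E_1\oplus\boldsymbol{0})=M[G_1]$ independently of $X$, so $G_1$ represents $M_1$. For the contraction, take $T\leq E_2$ and the block basis of $E_1\oplus T$; then $G(X)A^{E_1\oplus T}=\left(\begin{smallmatrix}G_1 & XA_2^T\\0 & G_2A_2^T\end{smallmatrix}\right)$, whose first $n_1$ columns span $\F_{q^m}^{k_1}\oplus 0$, so modulo them the remaining columns reduce to $\left(\begin{smallmatrix}0\\G_2A_2^T\end{smallmatrix}\right)$ and the rank equals $k_1+\rk(G_2A_2^T)$. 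Hence $r_{M[G(X)]/(E_1\oplus\boldsymbol{0})}(T)=r^{G_2}(T)$ for all $X$, so $G_2$ represents $M_2$ (and in particular $M_1,M_2$ are $\F_{q^m}$-representable). The point is that for \emph{every} $X$ the $q$-matroid $M[G(X)]$ satisfies $M[G(X)]|(E_1\oplus\boldsymbol{0})\cong M_1$ and $M[G(X)]/(E_1\oplus\boldsymbol{0})\cong M_2$, i.e. $M[G(X)]\in\mathcal{M}_q(M_1,M_2)$.

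Finally I would invoke maximality to identify the count. By Theorem~\ref{thm:free_prod_is_maximal} the identity is a weak map from $M_1\square M_2$ to any member of $\mathcal{M}_q(M_1,M_2)$, so $r_{M[G(X)]}\leq r_{M_1\square M_2}$ pointwise; combined with boundedness (R1) this forces $\mathcal{I}(M[G(X)])\subseteq\mathcal{I}(M_1\square M_2)$, with equality of these finite collections (equivalently of their cardinalities) if and only if $M[G(X)]=M_1\square M_2$, since a $q$-matroid is determined by its independent spaces. By Theorem~\ref{thm:independentrank} and Remark~\ref{rem:Fqm-indep} the number $|\{U\leq\mS_{G(X)}:\dim_{\F_q}(U)=\dim_{\F_{q^m}}(\langle U\rangle_{\F_{q^m}})\}|$ equals $|\mathcal{I}(M[G(X)])|$. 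Thus this count is maximal over $X$ precisely when $M[G(X)]=M_1\square M_2$, and since $M_1\square M_2$ is representable, the corner block $X$ obtained in the first step realises this maximum, which is exactly the assertion.

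The main obstacle I anticipate is the contraction computation: one must verify carefully that $X$ drops out of $\rk\bigl(G(X)A^{E_1\oplus T}\bigr)$, so that $M[G(X)]$ remains in $\mathcal{M}_q(M_1,M_2)$ for \emph{all} $X$ — this invariance is precisely what licenses the weak-order maximality argument and turns the representability hypothesis into the maximality condition on $X$. A secondary point needing care is the identification of the count with $|\mathcal{I}(M[G(X)])|$, which relies on the bijection between independent spaces of $M[G(X)]$ and $\F_{q^m}$-independent subspaces of $\mS_{G(X)}$ from Theorem~\ref{thm:independentrank}.
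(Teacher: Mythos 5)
Your proof is correct and follows essentially the same route as the paper's: reduce to a block upper-triangular representation, observe that $G_1$ and $G_2$ represent the restriction to and contraction by $E_1\oplus\boldsymbol{0}$, and then invoke the weak-order maximality of the free product (Theorem~\ref{thm:free_prod_is_maximal}/Corollary~\ref{cor:maximality_free_prod}) to convert maximality of $|\mathcal{I}(M[G(X)])|$ into the condition $M[G(X)]=M_1\square M_2$. The only difference is one of detail, not of method: you explicitly justify the steps the paper leaves implicit, namely the change-of-basis argument for the block form, the computation showing $X$ drops out of the restriction and contraction (so $M[G(X)]\in\mathcal{M}_q(M_1,M_2)$ for every $X$), and the identification of the stated count with $|\mathcal{I}(M[G(X)])|$ via Theorem~\ref{thm:independentrank}.
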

    \begin{proof}
        Suppose that $M_1\square M_2$ is $\F_{q^m}$-representable. Then there exists a matrix $G \in \F_{q^m}^{(k_1+k_2) \times (n_1+n_2)}$ that represents $M_1\square M_2$ and which may be assumed to have the form
        $$G=\begin{pmatrix}
            G_1&X\\
            0&G_2
        \end{pmatrix},$$
        for some $G_i \in \F_{q^m}^{k_i \times n_i}$ and $X \in \F_{q^m}^{k_1 \times n_2}$.
        Clearly, $(M_1\square M_2)|{E_1}$ and $(M_1\square M_2)/E_1$ are represented by $G_1$ and $G_2$, respectively. Since $(M_1\square M_2)|{E_1}\cong M_1$ and $(M_1\square M_2)/E_1\cong M_2$, we have that $M_i$ is represented by $G_i$ for each $i$. The rest follows from Corollary \ref{cor:maximality_free_prod}, since $M_1\square M_2$ is maximal with respect to the weak ordering on ${\mathcal M}_q(M_1,M_2)$.
    \end{proof}

For the rest of this section, we will adopt the following notation.

\begin{notation}\label{Not:q-systems}
Let $\mS_1$ and $\mS_2$ be two $[n_1,k_1]_{q^m/q}$ and $[n_2,k_2]_{q^m/q}$-systems, respectively. Let $\{u_1,\ldots,u_{n_1}\}$ be a basis of $\mS_1$ and $\{w_1,\ldots,w_{n_2}\}$ be a basis of $\mS_2$. Let $x_1,\ldots,x_{n_2}\in\F_{q^m}^{k_1}$ be some fixed vectors and let $X\in\F_{q^m}^{k_1\times n_2}$ be the matrix whose columns are $x_1,\ldots,x_{n_2}$.
For $i=1,\ldots,n_1$, we define $\overline{u}_i:=(u_i,0,\dots,0)\in\F_{q^m}^{k_1+k_2}$ and for every $j=1,\ldots,n_2$, we define $\widetilde{w}_j:=(x_j,w_j)\in\F_{q^m}^{k_1+k_2}$.
We define the following embeddings:
\begin{align}
    \varphi_1: \mS_1 &\hookrightarrow \F_{q^m}^{k_1+k_2}, \:\:\sum_{j=1}^{n_1} a_j u_j \mapsto \sum_{j=1}^{n_1} a_j \overline{u}_j\:\:\forall\: a_j \in \F_q ;\label{eq:phi1}\\
    \varphi_2^X: \mS_2 &\hookrightarrow \F_{q^m}^{k_1+k_2},\:\:\sum_{j=1}^{n_w} b_j w_j \mapsto \sum_{j=1}^{n_w} b_j \tilde{w}_j\:\:\forall\: b_j \in \F_q .\label{eq:phi2}
\end{align}
Moreover, we define:
$$\sigma_X:\mS_2 \to \langle x_1,\ldots,x_{n_2}\rangle_{\Fq}, w_i\mapsto x_i.$$

Let $M_1, M_2$ be two $\F_{q^m}$-representable $q$-matroids with ground spaces $\F_q^{n_1}$ and $\F_{q}^{n_2}$ and let $G_1\in\F_{q^m}^{k_1\times n_1}$, $G_2\in\F_{q^m}^{k_2\times n_2}$ be matrix representations of $M_1$ and $M_2$, respectively. Let $\mS_1,\mS_2$ be the $q$-systems associated to $G_1$ and $G_2$ (i.e. the respective $\F_q$-spans of their columns). Define the isomorphisms $\psi_{G_1}:\F_q^{n_1}\to\mS_1, \; v\mapsto G_1v^\top$ and 
$\psi_{G_2}:\F_q^{n_2}\to\mS_2, \; v\mapsto G_2v^\top$. 
Consider the $q$-system~$\mS$ associated to 
\begin{equation}\label{eq:G}
    G=\begin{pmatrix}
    G_1 & X \\
    0 & G_2
\end{pmatrix}.
\end{equation} 
Note that $\mS$ is an $(n_1+n_2)$-dimensional $\F_q$-subspace of $\F_{q^m}^{k_1+k_2}$
and hence there is a natural isomorphism $\psi_G: \F_{q}^{n_1+n_2}\to \mS$.
\end{notation}

    Clearly, $\dim_{\F_q}(\mS_1)=\dim_{\F_q}(\varphi_1(\mS_1))$ and $\dim_{\F_q}(\mS_2)=\dim_{\F_q}(\varphi_2^X(\mS_1))$.\\ 
    Moreover, $\langle \varphi_1(\mS_1), \varphi_2^X(\mS_2)\rangle_{\F_q}\cong \mS$ for every $X\in\F_{q^m}^{k_1\times n_2}$.
We also make the following observation.
    
\begin{lemma}
    Given Notation \ref{Not:q-systems}, let $A\leq\F_q^{n_1}$ and $B\leq \F_{q}^{n_2}$.
    The space 
    $$\langle \varphi_1(\psi_{G_1}(A)),\varphi_2^X(\psi_{G_2}(B))\rangle_{\Fq}$$
    is an $\F_q$-subspace of $\mS$.
\end{lemma}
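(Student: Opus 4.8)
The plan is to reduce the stated containment to two separate containments, one for each generating family, and then to verify each of those by relating the embeddings $\varphi_1$ and $\varphi_2^X$ to the representation map $\psi_G$ via a single block computation. First I would record the trivial but essential observation that $\mS$ is, by construction, an $\F_q$-subspace of $\F_{q^m}^{k_1+k_2}$, hence closed under $\F_q$-linear combinations. Since $\langle \varphi_1(\psi_{G_1}(A)),\varphi_2^X(\psi_{G_2}(B))\rangle_{\Fq}$ is the smallest $\F_q$-subspace containing both $\varphi_1(\psi_{G_1}(A))$ and $\varphi_2^X(\psi_{G_2}(B))$, it suffices to prove the two inclusions $\varphi_1(\psi_{G_1}(A))\subseteq\mS$ and $\varphi_2^X(\psi_{G_2}(B))\subseteq\mS$.

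Next, writing $\iota_1:\Fq^{n_1}\hookrightarrow\Fq^{n_1+n_2}$ and $\iota_2:\Fq^{n_2}\hookrightarrow\Fq^{n_1+n_2}$ for the canonical inclusions into the first $n_1$ and last $n_2$ coordinates, I would establish the two commuting identities
\[
\varphi_1\circ\psi_{G_1}=\psi_G\circ\iota_1,\qquad \varphi_2^X\circ\psi_{G_2}=\psi_G\circ\iota_2.
\]
These follow from a direct block computation with $G=\left(\begin{smallmatrix}G_1 & X\\ 0 & G_2\end{smallmatrix}\right)$, using the basis-free descriptions $\varphi_1(s)=(s,\boldsymbol{0})$ and $\varphi_2^X(s)=(\sigma_X(s),s)$. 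Concretely, for $a\in\Fq^{n_1}$ one has $\psi_G(\iota_1(a))=G\left(\begin{smallmatrix}a^\top\\ 0\end{smallmatrix}\right)=(G_1a^\top,\boldsymbol{0})=\varphi_1(\psi_{G_1}(a))$, and for $b\in\Fq^{n_2}$ one has $\psi_G(\iota_2(b))=G\left(\begin{smallmatrix}0\\ b^\top\end{smallmatrix}\right)=(Xb^\top,G_2b^\top)$, which equals $\varphi_2^X(\psi_{G_2}(b))=(\sigma_X(G_2b^\top),G_2b^\top)$ because $\sigma_X(G_2b^\top)=Xb^\top$. It is in checking this last equality that I would take the bases $\{u_1,\dots,u_{n_1}\}$ and $\{w_1,\dots,w_{n_2}\}$ to be the columns of $G_1$ and $G_2$ (the identification implicit in Notation~\ref{Not:q-systems}), so that each $\widetilde{w}_j=(x_j,w_j)$ is precisely the $(n_1+j)$-th column of $G$ and $\sigma_X$ sends the $j$-th column of $G_2$ to $x_j$.

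Finally, the two identities yield $\varphi_1(\psi_{G_1}(A))=\psi_G(\iota_1(A))\subseteq\psi_G(\Fq^{n_1+n_2})=\mS$ and $\varphi_2^X(\psi_{G_2}(B))=\psi_G(\iota_2(B))\subseteq\mS$, which are exactly the two inclusions needed, and the claim follows. I expect no genuine obstacle here: the only point requiring care is the bookkeeping of the block structure of $G$ together with the matching of the chosen bases to the columns of $G_1$ and $G_2$. This matching is precisely what forces $\widetilde{w}_j$ to be a column of $G$ (rather than an unrelated vector of $\F_{q^m}^{k_1+k_2}$), and hence what places $\varphi_2^X(\mS_2)$ inside $\mS$; the part $\varphi_1(\mS_1)=\mS_1\oplus\boldsymbol{0}\subseteq\mS$ is immediate and basis-independent.
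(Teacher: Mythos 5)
Your proof is correct and takes essentially the same route as the paper's: the commuting identities $\varphi_1\circ\psi_{G_1}=\psi_G\circ\iota_1$ and $\varphi_2^X\circ\psi_{G_2}=\psi_G\circ\iota_2$ are just an explicit, computational rendering of the paper's one-line observation that $\{\overline{u}_1,\dots,\overline{u}_{n_1},\widetilde{w}_1,\dots,\widetilde{w}_{n_2}\}$ --- precisely the columns of $G$ --- is a basis of $\mS$, so that both images, and hence their $\F_q$-span, lie in $\mS$. Your care in matching the bases $\{u_i\}$, $\{w_j\}$ to the columns of $G_1$, $G_2$ makes explicit an identification the paper leaves implicit, and you correctly note that this matching is what guarantees $\widetilde{w}_j\in\mS$.
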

\begin{proof}
   Note that a basis of $\mS$ is given by $\{\bar{u}_1,\ldots, \bar{u}_{n_1}, \tilde{w}_1,\ldots,\tilde{w}_{n_2}\}$, as defined in Notation \ref{Not:q-systems}. 
   We have that $\varphi_1(\psi_{G_1}(A))$ is an $\F_q$-subspace of $\langle \bar{u}_1,\ldots, \bar{u}_{n_1}\rangle_{\F_q}$ and $\varphi_2^X(\psi_{G_2}(B))$ is an $\F_q$-subspace of $\langle  \tilde{w}_1,\ldots,\tilde{w}_{n_2}\rangle_{\F_q}$. Hence, the statement follows.
\end{proof}

\subsection{The free product of uniform \emph{q}-matroids}\label{sec:uniforms}
This subsection is inspired by the results of \cite{alfarano2024representability}, wherein the representability of the direct sum of uniform $q$-matroids is investigated from a geometric point of view. We will give a necessary condition for a $q$-system to be the representation of the free product of two uniform $q$-matroids. It turns out that for both the direct sum and the free product of uniform $q$-matroids, the cyclic flats play a key role.
We then restrict our attention to the special case of the free product of a pair of rank one uniform $q$-matroids. In this case, we show that a necessary condition for representability is equivalent to the existence of \emph{clubs} on the $\F_{q^m}$-projective line.

\medskip

We denote by $\PG(k-1,q^m)$ the $(k-1)$-dimensional projective space with underlined vector space $\F_{q^m}^k$. For a vector space $V$, we denote by $\PG(V,\F_{q^m})$, its associated projective space.

\begin{definition}
Let $\mS$ be an $[n,k]_{q^m/q}$ system. For each $\F_q$-subspace $V\leq \F_{q^m}^k$, we define the \textbf{weight} of $V$ in $\mS$ to be the integer
$$ \wt_{\mS}(V):=\dim_{\F_q}(\mS\cap V).$$
The set 
$$L_{\mS}:=\{\langle v\rangle_{\F_{q^m}} : v\in\mS\setminus\{0\}\}$$
is called the \textbf{linear set} of $\mS$ of rank $\dim_{\F_q}(\mS)$ in $\PG(k-1,q^m)$. 
We also define the \textbf{weight} of the projective space $\PG(V,\F_{q^m})$ in $L_\mS$ to be $\wt_{\mS}(V)$, which we denote by $\wt_{L_{\mS}}(\PG(V,\F_{q^m}))$.
\end{definition}

We could say that a $q$-system is the vectorial counterpart of a linear set. For our purposes, we do not need a deep background in finite geometry. However, for a nice treatment of linear sets, we refer the interested reader to \cite{polverino2010linear}.

We now introduce the concept of an \emph{evasive} space, which is a $q$-system with special intersection properties. These objects have been studied in \cite{bartoli2021evasive} as a $q$-analogue of \emph{evasive sets}. 
An evasive subspace is a natural generalization of an $h$-scattered subspace; see \cite{blokhuis2000scattered, csajbok2021generalising}.
Moreover, there are connections to rank-metric codes; see \cite{bartoli2021evasive,marino2023evasive}. 

\begin{definition} \label{def:ahevasive}
Let $h$ be a positive integer. 
Let $\mathcal{A}$ be a collection of $\F_q$-subspaces of $\F_{q^m}^k$, and let $\mS$ be an $[n,k]_{q^m/q}$ system. We say that
$\mS$ is $(\mathcal{A},h)$-\textbf{evasive} if
\[ \wt_{\mS}(A)\leq h \quad \mbox{for all $A \in \mathcal{A}$}. \]
\end{definition}

\begin{definition}
   
    Let $\mS$ be an $[n,k]_{q^m/q}$ system and let $h,r$ be positive integers such that $0\le h \le k$. We denote by $\Lambda_h$ the set of all the $h$-dimensional 
$\F_{q^m}$-subspaces of $\F_{q^m}^k$. We say that $\mS$ is \textbf{$(h,r)$-evasive} if $\mS$ is $(\Lambda_h,r)$-\textbf{evasive}. When $h=r$, we say that $\mS$ is \textbf{$h$-scattered}. Finally, for $h=1$ a $1$-scattered $q$-system will be simply called \textbf{scattered}. 
\end{definition}

Let $k_1,k_2,k,h$ be positive integers such that $k=k_1+k_2$ and $1\leq h \leq k-1$. We denote by $\Lambda_{h,k_1}$ the set of all the $h$-dimensional $\F_{q^m}$-subspaces of $\F_{q^m}^k$ that do not contain $\Fm^{{k_1}}\oplus \zero$.

From Theorem \ref{thm:char_uniform} and Lemma \ref{lem:indep_charact}, we have the following result. 

\begin{corollary}\label{cor:ind_free_uniforms}
    Let $\mU_{k_i,n_i}(q)$, $i=1,2$ be uniform $q$-matroids. Let $M=\mU_{k_1,n_1}(q)\square \mU_{k_2,n_2}(q)$. Then $I\in\mI(M)$ if and only if
    $$\dim(I)\leq k_1+k_2,  \;  \textnormal{ and } \; \dim(I\cap (\F_q^{n_1}\oplus \zero))\leq k_1.$$
\end{corollary}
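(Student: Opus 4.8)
The plan is to read off the independence condition directly from the two cited results, since the cyclic flats of the free product of uniform $q$-matroids are as simple as possible. First I would apply Theorem~\ref{thm:char_uniform} to record that the lattice of cyclic flats of $M=\mU_{k_1,n_1}(q)\square\mU_{k_2,n_2}(q)$ is
$$\mZ(M)=\{\boldsymbol{0},\ \F_q^{n_1}\oplus\boldsymbol{0},\ \F_q^n\},$$
with rank values $r(\boldsymbol{0})=0$, $r(\F_q^{n_1}\oplus\boldsymbol{0})=k_1$ and $r(\F_q^n)=k_1+k_2$.

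Next I would invoke Lemma~\ref{lem:indep_charact}, which states that $I\in\mI(M)$ if and only if $\dim(I\cap Z)\leq r(Z)$ for every $Z\in\mZ(M)$. Evaluating this inequality on each of the three cyclic flats in turn, the flat $Z=\boldsymbol{0}$ contributes the vacuous constraint $0\leq 0$; the flat $Z=\F_q^{n_1}\oplus\boldsymbol{0}$ contributes exactly $\dim(I\cap(\F_q^{n_1}\oplus\boldsymbol{0}))\leq k_1$; and the flat $Z=\F_q^n$ contributes $\dim(I)=\dim(I\cap\F_q^n)\leq k_1+k_2$. These are precisely the two displayed inequalities in the statement, so the equivalence is immediate.

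The only point needing attention is that Theorem~\ref{thm:char_uniform} is stated under the hypothesis $0<k_1+k_2<n_1+n_2$, so I would dispose of the two degenerate regimes separately. If $k_1+k_2=0$ then $M$ has every one-dimensional space as a loop, the only independent space is $\boldsymbol{0}$ (as one checks directly from the defining collection~\eqref{eq:independent}), and the inequality $\dim(I)\leq 0$ already forces this; if $k_1+k_2=n_1+n_2$ then $k_i=n_i$, $M$ is the free $q$-matroid, every subspace is independent, and both inequalities hold automatically. Since these edge cases agree with the claimed characterisation and the generic case is the substitution above, there is no substantive obstacle; the entire work consists in correctly matching each cyclic flat to its inequality.
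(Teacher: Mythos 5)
Your proposal is correct and follows essentially the same route as the paper, which derives the corollary directly from Theorem~\ref{thm:char_uniform} (the three-element lattice of cyclic flats with ranks $0$, $k_1$, $k_1+k_2$) combined with Lemma~\ref{lem:indep_charact}, matching each cyclic flat to its inequality exactly as you do. Your separate treatment of the degenerate cases $k_1+k_2=0$ and $k_1+k_2=n_1+n_2$ is a harmless extra precaution that the paper itself omits.
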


\begin{notation}
    Let $\mS_1$ be an $[n_1,k_1]_{q^m/q}$-system and let $\mS_2$ be an $[n_2,k_2]_{q^m/q}$-system. Let $X\in\F_{q^m}^{k_1\times n_2}$ and define the $[n_1+n_2,k_1+k_2]_{q^m/q}$-system $\mS_1\square_X\mS_2$ to be $\langle \varphi_1(\mS_1),\varphi_2^X(\mS_2)\rangle_{\F_q}\leq \F_{q^m}^{k_1+k_2}$, where $\varphi_1$ and $\varphi_2^X$ are defined as in \eqref{eq:phi1} and \eqref{eq:phi2}, respectively.
\end{notation}

The following result characterises the independent spaces of an $\Fm$-representation of the free product of $\mU_{k_1,n_1}(q)$ and $\mU_{k_2,n_2}(q)$.

\begin{proposition}\label{prop:geo_prod_unif}
    Let $k_1,k_2,k,n_1,n_2, m$ be positive integers satisfying $1\le k_i< n_i\le m$ for $i =1,2$ and $k=k_1+k_2$. Let $\mS_i$ be an $\Fm$-representation of $\mU_{k_i,n_i}(q)$ for each $i\in \{1,2\}$. The following statements are equivalent.
    \begin{enumerate}
        \item[(i)] $(\mS_1\square_X\mS_2,r_{\mS_1\square_X\mS_2})$ is an $\Fm$-representation of $\mU_{k_1,n_1}(q)\square\mU_{k_2,n_2}(q)$.
    \item[(ii)] For any $\Fq$-subspace $I\subseteq \mS_1\square_X \mS_2$ we have that 
\begin{equation*}
        r_{\mS_1\square_X \mS_2}(I)=\dim_{\F_q} (I) \Longleftrightarrow 
            \begin{cases} \dim_{\Fq}(I)\le k \text{ and}\\
                \dim_{\F_q} (I\cap (\mS_1\oplus \zero)) \leq k_1.
            \end{cases} 
    \end{equation*}
    \end{enumerate}
\end{proposition}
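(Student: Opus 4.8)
The plan is to prove both implications at the level of independent spaces, exploiting the fact that a $q$-matroid is uniquely determined by its collection of independent spaces. Write $\mS:=\mS_1\square_X\mS_2$ and let $\psi_G\colon\F_q^{n_1+n_2}\to\mS$ be the natural isomorphism of Notation~\ref{Not:q-systems}, noting that $\psi_G$ carries $\F_q^{n_1}\oplus\boldsymbol{0}$ onto $\mS_1\oplus\boldsymbol{0}=\varphi_1(\mS_1)$. By Remark~\ref{rem:Fqm-indep} the independent spaces of $(\mS,r_\mS)$ are exactly the $\F_{q^m}$-independent subspaces of $\mS$, i.e.\ those $I\leq\mS$ with $r_\mS(I)=\dim_{\F_q}(I)$. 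On the other side, Corollary~\ref{cor:ind_free_uniforms} describes $\mI(\mU_{k_1,n_1}(q)\square\mU_{k_2,n_2}(q))$ as the subspaces $I$ with $\dim(I)\leq k$ and $\dim(I\cap(\F_q^{n_1}\oplus\boldsymbol{0}))\leq k_1$. Transporting the latter through $\psi_G$, condition (ii) is precisely the assertion that these two collections coincide. Thus (ii) is equivalent to the equality $\psi_G(\mI(\mU_{k_1,n_1}(q)\square\mU_{k_2,n_2}(q)))=\mI((\mS,r_\mS))$ of independent-space collections.

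Given this reformulation, the implication (ii)$\Rightarrow$(i) is immediate: two $q$-matroids on the same ground space with the same independent spaces are equal, so $(\mS,r_\mS)$ is, via $\psi_G$, the free product, which is (i).

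For the converse (i)$\Rightarrow$(ii) I would first record that, because $G$ has the block upper-triangular shape of \eqref{eq:G}, the chain $\boldsymbol{0}<\mS_1\oplus\boldsymbol{0}<\mS$ realises $(\mS,r_\mS)$ as a member of $\mathcal{M}_q(\mU_{k_1,n_1}(q),\mU_{k_2,n_2}(q))$: the restriction $(\mS,r_\mS)|(\mS_1\oplus\boldsymbol{0})$ is represented by $G_1$ and hence is equivalent to $\mU_{k_1,n_1}(q)$, while the contraction $(\mS,r_\mS)/(\mS_1\oplus\boldsymbol{0})$ is represented by $G_2$ (the bottom block, obtained after projecting away the first $k_1$ coordinates) and hence is equivalent to $\mU_{k_2,n_2}(q)$; both identifications are routine from Definition~\ref{def:restriction} and the shape of $G$, and hold for every admissible $X$. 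By Theorem~\ref{thm:free_prod_is_maximal} the identity is a weak map from the free product onto $(\mS,r_\mS)$, whence $r_\mS\circ\psi_G\leq r_{\mathrm{free}}$ pointwise and therefore $\mI((\mS,r_\mS))\subseteq\psi_G(\mI(\mathrm{free}))$ (if $I$ is independent in $(\mS,r_\mS)$ then $\dim I=r_\mS(I)\leq r_{\mathrm{free}}(\psi_G^{-1}(I))\leq\dim I$ forces equality, so $\psi_G^{-1}(I)$ is independent in the free product). Assumption (i) says $(\mS,r_\mS)$ is isomorphic to the free product; since isomorphic $q$-matroids have the same (finite) number of independent spaces, the displayed inclusion is an equality, which is exactly the reformulated (ii).

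The main obstacle I anticipate is the (i)$\Rightarrow$(ii) direction: ``being a representation'' is only an isomorphism-class statement and does not a priori respect the distinguished flag $\boldsymbol{0}<\mS_1\oplus\boldsymbol{0}<\mS$, so one cannot simply read (ii) off from an arbitrary equivalence. The weak-order maximality of the free product (Corollary~\ref{cor:maximality_free_prod}, via Theorem~\ref{thm:free_prod_is_maximal}) is exactly what upgrades an abstract isomorphism into the pointwise rank equality $r_\mS\circ\psi_G=r_{\mathrm{free}}$ needed to conclude. Care is also required to verify cleanly that the contraction by $\mS_1\oplus\boldsymbol{0}$ is represented by $G_2$ independently of the choice of $X$, so that membership in the poset holds for every admissible $X$.
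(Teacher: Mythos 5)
Your proof is correct, and in the direction (i)$\Rightarrow$(ii) it takes a genuinely different---and more careful---route than the paper. The paper's own proof is essentially a citation: it observes that $M_1\cong M_2$ holds if and only if some invertible $\F_q$-linear map $\psi$ matches the two families of independent spaces, and then invokes Remark~\ref{rem:Fqm-indep} together with Corollary~\ref{cor:ind_free_uniforms}; your reformulation of (ii) as the equality $\mI\big((\mS,r_{\mS})\big)=\psi_G\big(\mI(\mU_{k_1,n_1}(q)\square\mU_{k_2,n_2}(q))\big)$ and your (ii)$\Rightarrow$(i) direction coincide with this. Where you diverge is precisely at the obstacle you flag: (i) supplies only \emph{some} isomorphism, which need not carry $\F_q^{n_1}\oplus\boldsymbol{0}$ onto $\varphi_1(\mS_1)$, whereas (ii) is anchored to that distinguished subspace; the paper leaves this identification implicit. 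Your patch is sound: the restriction to and contraction by $\varphi_1(\mS_1)$ are represented by $G_1$ and $G_2$ respectively (the contraction computation $\rk\begin{pmatrix} G_1 & XA\\ 0 & G_2A \end{pmatrix}=k_1+\rk(G_2A)$ works for every $X$ because $G_1$ has full row rank), so Theorem~\ref{thm:free_prod_is_maximal} gives $\mI\big((\mS,r_{\mS})\big)\subseteq\psi_G\big(\mI(\mU_{k_1,n_1}(q)\square\mU_{k_2,n_2}(q))\big)$, and assumption (i) plus finiteness upgrades the inclusion to equality by counting. What each approach buys: the paper's proof is shorter but rests on the unstated flag identification; your weak-order argument closes that gap at the cost of importing the machinery of section~4. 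A cheaper repair within the paper's own toolkit also exists and is worth noting: since $\langle\varphi_1(\mS_1)\rangle_{\Fm}=\Fm^{k_1}\oplus\zero$ one has $r_{\mS}(\varphi_1(\mS_1))=k_1<n_1=\dim_{\F_q}\varphi_1(\mS_1)$, and if (i) holds then by Theorem~\ref{thm:char_uniform} the cyclic flats of $(\mS,r_{\mS})$ form a chain $\boldsymbol{0}<Z<\mS$ with $r_{\mS}(Z)=k_1$ and $\dim_{\F_q}(Z)=n_1$, so Lemma~\ref{lem:rank_function_cyclic_flats} forces $\varphi_1(\mS_1)\leq Z$, hence $Z=\varphi_1(\mS_1)$, and Lemma~\ref{lem:indep_charact} then yields (ii) directly.
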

\begin{proof}
    Let $M_1=(\mS_1\square_X \mS_2,r_{\mS_1\square_X \mS_2})$ and let $M_2=\mU_{k_1,n_1}(q)\square\mU_{k_2,n_2}(q)$. Then, $M_1\cong M_2$ if and only if there exists an invertible $\Fq$-linear map $\psi:\Fq^{n_1+n_2}\longrightarrow\mS_1\square_X\mS_2$ such that $\mI(M_1)=\mI(\psi(M_2))$. Let $G$ be any generator matrix associated to $\mS_1\square_X\mS_2$ of the form
        $$G=\begin{pmatrix}
            G_1 & X \\ 0 & G_2
        \end{pmatrix},$$
        where $G_1\in \F_{q^m}^{k_1\times n_1}$, $G_2\in \F_{q^m}^{k_2\times n_2}$ are generator matrices associated with $\mS_1$ and $\mS_2$, respectively. The statement now follows from the characterization of independent spaces of $M_1$ given in Remark \ref{rem:Fqm-indep}, and the characterization of independent spaces of $M_2$ derived in Corollary \ref{cor:ind_free_uniforms}.
\end{proof}

The following result illustrates a necessary condition for $\mS_1\square_X\mS_2$ to be an $\F_{q^m}$-representation of $\mU_{k_1,n_1}(q)\square\mU_{k_2,n_2}(q)$.

\begin{theorem}\label{thm:repr_evasivity}
    Let $k_1,k_2,k,n_1,n_2, m$ be positive integers, with $1\le k_i< n_i\le m$ for $i \in \{1,2\}$ and $k=k_1+k_2$. Let $\mS_i$ be an $\Fm$-representation of $\mU_{k_i,n_i}(q)$ for each $i\in \{1,2\}$. If $\mS_1\square_X\mS_2$ is an $\Fm$-representation of $\mU_{k_1,n_1}(q)\square\mU_{k_2,n_2}(q)$, then $\mS_1\square_X\mS_2$ is $(\Lambda_{k-1,k_1},k-1)$-evasive.
\end{theorem}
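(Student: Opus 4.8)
The plan is to bound the weight $\wt_{\mS}(A)$ directly, where $\mS := \mS_1\square_X\mS_2$, by combining the explicit rank formula for the free product (Theorem~\ref{thm:free_prod_rank_function}) with the fact that, for a $q$-system, the matroid rank of an $\F_q$-subspace equals the $\Fm$-dimension of its $\Fm$-span (Theorem~\ref{thm:independentrank}, Remark~\ref{rem:Fqm-indep}). Fix $A\in\Lambda_{k-1,k_1}$ and set $I:=\mS\cap A$, so that $\wt_{\mS}(A)=\dim_{\F_q}(I)$; the goal is to show $\dim_{\F_q}(I)\le k-1$. I would transport $I$ to the ground space $E=\F_q^{n_1}\oplus\F_q^{n_2}$ via the isomorphism $\psi_G$ of Notation~\ref{Not:q-systems} (keeping the name $I$), noting that $\psi_G^{-1}(\mS_1\oplus\zero)=\iota_1(E_1)$, so that all computations can be done in $E$ using $r=r_{\mS}$, which the representability hypothesis guarantees.

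The argument rests on two facts. First, since $I\subseteq A$ and $A$ is an $\Fm$-subspace, $\langle I\rangle_{\Fm}\subseteq A$, whence $r(I)=\dim_{\Fm}\langle I\rangle_{\Fm}\le\dim_{\Fm}(A)=k-1$. Second, writing $d_1:=\dim_{\F_q}(I\cap\iota_1(E_1))$ and $d_2:=\dim_{\F_q}(\pi_2(I))$ (so $\dim_{\F_q}(I)=d_1+d_2$ by Lemma~\ref{lem:rank_nullity}), I claim $d_1\le k_1-1$. Indeed, the span of $I\cap(\mS_1\oplus\zero)$ lies in $A\cap(\Fm^{k_1}\oplus\zero)$, which has $\Fm$-dimension at most $k_1-1$ because $A\not\supseteq\Fm^{k_1}\oplus\zero$; on the other hand $r$ restricted to $\mS_1\oplus\zero$ is the rank function of $\mU_{k_1,n_1}(q)$, so $r(I\cap\iota_1(E_1))=\min(k_1,d_1)$. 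Since this value is strictly less than $k_1$, we must have $\min(k_1,d_1)=d_1$, i.e. $d_1\le k_1-1$.

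Now I would feed $d_1,d_2$ into Theorem~\ref{thm:free_prod_rank_function}. With $\lambda(\,\cdot\,)=k_1-\min(k_1,\cdot)$ and $\nu(\,\cdot\,)=\max(0,\cdot-k_2)$ for the two uniform factors, and using $d_1\le k_1-1$,
$$r(I)=d_1+\min(k_2,d_2)+\min\bigl(k_1-d_1,\ \max(0,d_2-k_2)\bigr).$$
If $d_2\le k_2$, then $\dim_{\F_q}(I)=d_1+d_2\le(k_1-1)+k_2=k-1$ and we are done. If $d_2> k_2$, the last term is $\min(k_1-d_1,\,d_2-k_2)$; should $d_2-k_2\ge k_1-d_1$ hold, one gets $r(I)=d_1+k_2+(k_1-d_1)=k$, contradicting the first fact $r(I)\le k-1$. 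Hence $d_2-k_2<k_1-d_1$, which forces $\dim_{\F_q}(I)=d_1+d_2<k$, i.e. $\dim_{\F_q}(I)\le k-1$. In every case $\wt_{\mS}(A)\le k-1$, and since $A\in\Lambda_{k-1,k_1}$ was arbitrary, $\mS$ is $(\Lambda_{k-1,k_1},k-1)$-evasive.

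The main obstacle I anticipate is not the case analysis itself but the bookkeeping needed to justify that the two inputs are legitimate: namely that under the representation hypothesis $r_{\mS}$ genuinely agrees with the abstract free-product rank function (so Theorem~\ref{thm:free_prod_rank_function} applies), that the geometric projection onto the last $k_2$ coordinates corresponds to the matroid projection $\pi_2$ with the same dimensions (so Lemma~\ref{lem:rank_nullity} gives $\dim_{\F_q}(I)=d_1+d_2$), and that $r$ restricts to the uniform rank function on $\mS_1\oplus\zero$. The single conceptual step is recognizing that the constraint $A\in\Lambda_{k-1,k_1}$ is used twice in genuinely different ways: its $(k-1)$-dimensionality caps $r(I)$ and kills the bad subcase, while the condition $A\not\supseteq\Fm^{k_1}\oplus\zero$ is exactly what yields $d_1\le k_1-1$.
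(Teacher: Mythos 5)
Your argument is correct, and it takes a genuinely different route from the paper's. The paper proves the statement by contradiction in a few lines: if $\mS:=\mS_1\square_X\mS_2$ were not $(\Lambda_{k-1,k_1},k-1)$-evasive, one picks $H\in\Lambda_{k-1,k_1}$ with $\wt_{\mS}(H)\geq k$ and a subspace $I\leq H\cap\mS$ with $\dim_{\F_q}(I)=k$; since $\langle I\rangle_{\Fm}\leq H$, one gets $r_{\mS}(I)\leq k-1<\dim_{\F_q}(I)$, so $I$ is dependent, contradicting the independence characterisation of Proposition~\ref{prop:geo_prod_unif}. Note that invoking the biconditional there requires checking $\dim_{\F_q}(I\cap(\mS_1\oplus\zero))\leq k_1$, which the paper leaves implicit and which is exactly the observation you make explicit: $A\not\supseteq\Fm^{k_1}\oplus\zero$ together with the uniformity of $\mS_1$ forces $d_1\leq k_1-1$. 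You instead bound $\wt_{\mS}(A)$ directly from the closed rank formula of Theorem~\ref{thm:free_prod_rank_function} with a case split on $d_2$; the computation is sound (in the case $d_2\leq k_2$ you need only the dimension count with $d_1\leq k_1-1$, not the rank formula), and it cleanly isolates the two distinct uses of the hypothesis $A\in\Lambda_{k-1,k_1}$. The paper's route buys brevity by outsourcing the matroid-theoretic content to Proposition~\ref{prop:geo_prod_unif}; yours buys a self-contained, quantitative argument that does not pass through that proposition.

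The one step you assert rather than prove, and correctly flag as the main obstacle, is that the representability hypothesis lets you evaluate $r_{\mS}$ by the free-product formula relative to the geometric flag $\mS_1\oplus\zero$: the hypothesis only provides an isomorphism with $\mU_{k_1,n_1}(q)\square\mU_{k_2,n_2}(q)$ via \emph{some} $\F_q$-linear map, not a priori one carrying $\F_q^{n_1}\oplus\zero$ onto $\varphi_1(\mS_1)$. This is repairable in a few lines via cyclic flats. By Theorem~\ref{thm:char_uniform}, the lattice of cyclic flats of $(\mS,r_{\mS})$ is $\{\zero,Z,\mS\}$ with $\dim_{\F_q}(Z)=n_1$ and $r_{\mS}(Z)=k_1$. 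Since $r_{\mS}(\varphi_1(\mS_1))=\dim_{\Fm}(\Fm^{k_1}\oplus\zero)=k_1$ and $\dim_{\F_q}(\varphi_1(\mS_1))=n_1$, Lemma~\ref{lem:rank_function_cyclic_flats} yields $k_1=\min\{n_1,\,k_1+n_1-\dim_{\F_q}(\varphi_1(\mS_1)\cap Z),\,k\}$; as $n_1>k_1$ and $k>k_1$, this forces $\dim_{\F_q}(\varphi_1(\mS_1)\cap Z)=n_1$, i.e.\ $Z=\varphi_1(\mS_1)$. Hence the unique nontrivial cyclic flat of $(\mS,r_{\mS})$ is $\mS_1\oplus\zero$, the matroid transported by $\psi_G$ equals the free product on the standard flag, and Theorem~\ref{thm:free_prod_rank_function} applies exactly as you use it. With that patch supplied, your proof is complete.
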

\begin{proof}
    Assume, towards a contradiction, that $\mS_1\square_X\mS_2$ is not $(\Lambda_{k-1,k_1},k-1)$-evasive. Then, there exists an $\F_{q^m}$-hyperplane $H\subseteq\F_{q^m}^{k_1+k_2}$, such that $H$ does not contain $\F_{q^m}^k\oplus \zero$ and $\dim_{\F_q}(H\cap(\mS_1\square_X\mS_2))\geq k_1+k_2$. Set $V=H\cap(\mS_1\square_X\mS_2)$ and consider $I\leq V$, such that $\dim_{\F_q}(I)=k_1+k_2$.  Since $\langle I\rangle_{\F_{q^m}}\leq H$, we have that 
    $$r_{\mS_1\square_X\mS_2}(I)=\dim_{\F_q^m}\langle I\rangle_{\F_{q^m}}<k_1+k_2.$$
    Hence, $I$ is not independent and so by Proposition \ref{prop:geo_prod_unif}, $\mS_1\square_X\mS_2$ cannot be a representation of $\mU_{k_1,n_1}(q)\square\mU_{k_2,n_2}(q)$.
\end{proof}

Theorem \ref{thm:repr_evasivity} shows that in order to find a representation of 
$\mU_{k_1,n_1}(q)\square \mU_{k_2,n_2}(q)$, for which each components $\mU_{k_i,n_i}(q)$ has an $\Fm$-representations $\mS_i$, we must find a $(k_1\times n_2)$ matrix $X$ such that $\mS_1\square_X\mS_2$ is $(\Lambda_{k_1+k_2-1,k_1},k_1+k_2-1)$-evasive.

For the remainder of this section, we set $k_1=k_2=1$. 
In this case, we will observe that the linear set associated to a $(\Lambda_{1,1},1)$-evasive $(n_1+n_2)$-dimensional $\F_q$-subspace of $\F_{q^m}^{2}$ is an $n_1$-\emph{club} of rank $(n_1+n_2)$ in $\PG(1,q^m)$. These are well-studied objects in finite geometry, which were introduced in 2006 by Fancsali and Sziklai in the seminal paper \cite{fancsali2006maximal}. 

\begin{definition}
    An \textbf{$i$-club of rank} $n$ in $\PG(1,q^m)$ is an $\F_q$-linear set $L_\mS$ of rank $k$ for which all but one of its elements have weight one, while exactly one element has weight $i$. 
\end{definition}

It is known that in order for a club to exist, we must have $n\leq m$. In the case that the rank is maximal, i.e. $n=m$, $L_{\mS}$ is simply called an \textbf{$i$-club}. In the literature, $i$-clubs are the most studied among clubs. The interest in clubs was renewed when De Boeck and Van de Voorde in \cite{de2016linear} characterised the translation KM-arcs (\cite{korchmaros1990q}) exactly as those
that can be described by $i$-clubs in even characteristic. A first algebraic construction was already given in \cite{korchmaros1990q} and the corresponding geometrical construction can be found in \cite{gacs2003q}. Moreover, $i$-clubs also define linear blocking sets of R\'edei type and they define Hamming metric codes with few weights; see \cite{napolitano2023two}. The algebraic description of $i$-clubs has been recently
investigated in \cite{bartoli2022r}, under the name of $1$-fat polynomials; see also \cite{polverino2024fat}. Constructions of $(n-1)$- and $(n-2)$-clubs in $\PG(1,q^n)$, as well as $t(\ell-1)$- and $t(\ell-1)+1$-clubs in $\PG(1,q^{rt})$ are known; see \cite{napolitano2022clubs} and the references therein. Finally, we observe that in \cite{de2022weight} it has been shown that $2$-clubs of rank $5$ in $\PG(1,q^5)$ do not exist.

The following result is a specialization of Theorem \ref{thm:repr_evasivity} when $k_1=k_2=1$.

\begin{theorem}\label{thm:evasive-clubs}
     Let $n_1,n_2, m$ be positive integers, with $1\leq n_i\le m$ for $i \in \{1,2\}$ and $n=n_1+n_2$. Let $\mS_i$ be an $\Fm$-representation of $\mU_{1,n_i}(q)$ for each $i\in \{1,2\}$ and let $\mS:=\mS_1\square_X\mS_2$. The following are equivalent.
     \begin{enumerate}
         \item[(i)] $(\mS,r_{\mS})$ is an $\Fm$-representation of $\mU_{1,n_1}(q)\square\mU_{1,n_2}(q)$. 
         \item[(ii)] $L_{\mS}$ is an $n_1$-club of rank $n$ in $\PG(1,q^m)$.
     \end{enumerate}
\end{theorem}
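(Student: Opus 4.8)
The plan is to route everything through the independence characterisation of Proposition~\ref{prop:geo_prod_unif}, which is a biconditional, and to translate the two conditions appearing there into the weight language of linear sets. Write $P_1:=\langle e_1\rangle_{\F_{q^m}}=\Fm\oplus\zero$ for the point of $\PG(1,q^m)$ determined by the first coordinate; note that, with $k_1=k_2=1$, the excluded subspace $\Fm^{k_1}\oplus\zero$ in the definition of $\Lambda_{1,1}$ is exactly $P_1$, so $(\Lambda_{1,1},1)$-evasiveness of $\mS$ means precisely that $\wt_\mS(P)\le 1$ for every point $P\neq P_1$.

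The first key step is to compute the weight of $\mS$ at $P_1$. Using the definition $\mS=\langle\varphi_1(\mS_1),\varphi_2^X(\mS_2)\rangle_{\F_q}$, a generic element is $(u+\sigma_X(w),w)$ with $u\in\mS_1$ and $w\in\mS_2$; its second coordinate lies in $\mS_2$, and since $\{w_1,\dots,w_{n_2}\}$ is an $\F_q$-basis of $\mS_2$, it vanishes only when $w=0$. Hence $\mS\cap P_1=\varphi_1(\mS_1)$, which gives $\wt_\mS(P_1)=\dim_{\F_q}(\mS_1)=n_1$. In particular $P_1\in L_\mS$ and $L_\mS$ has rank $\dim_{\F_q}(\mS)=n$, independently of $X$.

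The second step is to unwind condition (ii) of Proposition~\ref{prop:geo_prod_unif} dimension by dimension inside $\F_{q^m}^2$. For $\dim_{\F_q}I\le 1$ both sides hold trivially, and for $\dim_{\F_q}I\ge 3$ both sides fail, since $r_\mS(I)\le\dim_{\F_{q^m}}(\F_{q^m}^2)=2<\dim_{\F_q}I$. Thus the content lives entirely in dimension $2$, where $I$ is $\F_{q^m}$-dependent if and only if $I$ is contained in some point $P$, and where the condition $\dim_{\F_q}(I\cap(\mS_1\oplus\zero))\le 1$ fails if and only if $I\subseteq\varphi_1(\mS_1)\subseteq P_1$. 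Since $\varphi_1(\mS_1)$ already lies in $P_1$, the implication ``$I\subseteq\varphi_1(\mS_1)\Rightarrow I$ is $\F_{q^m}$-dependent'' is automatic, so the whole biconditional of Proposition~\ref{prop:geo_prod_unif} reduces to the single implication: every $\F_{q^m}$-dependent $2$-dimensional $I\le\mS$ satisfies $I\subseteq\varphi_1(\mS_1)$. Combining this with $\mS\cap P_1=\varphi_1(\mS_1)$ from the first step, I would show that this implication is equivalent to the statement that no point other than $P_1$ has weight $\ge 2$, i.e. to $\wt_\mS(P)\le 1$ for all $P\neq P_1$.

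It then remains to assemble the pieces. By Proposition~\ref{prop:geo_prod_unif}, (i) holds if and only if $\wt_\mS(P)\le 1$ for every $P\neq P_1$; equivalently, $\mS$ is $(\Lambda_{1,1},1)$-evasive, matching the specialisation of Theorem~\ref{thm:repr_evasivity}. Together with $\wt_\mS(P_1)=n_1$ and the fact that $L_\mS$ has rank $n$, this says exactly that $L_\mS$ is a linear set of rank $n$ in which $P_1$ has weight $n_1$ and every other point has weight $1$, that is, an $n_1$-club of rank $n$; this gives (ii), and the converse reads off the same equivalence in reverse. The main point to handle with care is the reduction in the second step --- verifying that the only genuinely nontrivial instance of the independence biconditional is the dimension-$2$ dependent case and that it matches the weight condition --- together with the boundary case $n_1=1$, where the ``exceptional'' point $P_1$ also has weight $1$ and the $n_1$-club degenerates to a scattered linear set, so that one must read the club definition as permitting the weight-$i$ element to share its weight value with the remaining points.
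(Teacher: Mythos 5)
Your proposal is correct, and it proves the theorem by a more unified route than the paper does. The paper splits the equivalence into two asymmetric halves: for (i)~$\Rightarrow$~(ii) it invokes Theorem~\ref{thm:repr_evasivity} to get $(\Lambda_{1,1},1)$-evasiveness and then simply asserts that $\langle(1,0)\rangle_{\F_{q^m}}$ has weight $n_1$; for (ii)~$\Rightarrow$~(i) it gives a separate hands-on argument with coordinates of points of $L_\mS$, showing that the second coordinates of any two points other than the special one must be $\F_q$-linearly independent, and from this concludes representability (implicitly via the independence characterisation). You instead treat Proposition~\ref{prop:geo_prod_unif} as a genuine biconditional and observe that, for $k_1=k_2=1$, its independence condition is vacuous outside dimension $2$ and in dimension $2$ collapses to the statement that every $\F_{q^m}$-dependent $2$-dimensional subspace of $\mS$ lies in $\varphi_1(\mS_1)$, which you correctly show is equivalent (using $\mS\cap(\Fm\oplus\zero)=\varphi_1(\mS_1)$ and the fact that distinct points meet trivially) to ``all points other than $\langle(1,0)\rangle_{\F_{q^m}}$ have weight at most $1$''. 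This buys you both directions from one reduction, makes transparent why the necessary evasiveness condition of Theorem~\ref{thm:repr_evasivity} is also \emph{sufficient} in this rank-one case (a question the paper leaves open in general, cf.\ its third open problem), and fills in details the paper glosses over: the computation $\wt_\mS(\langle(1,0)\rangle_{\F_{q^m}})=n_1$, the fact that $L_\mS$ has rank $n$ for every choice of $X$, and the degenerate case $n_1=1$, where the club definition must be read as allowing the distinguished element to have weight~$1$. Note that both your proof and the paper's inherit the same minor boundary issue when some $n_i=1$, since Proposition~\ref{prop:geo_prod_unif} and Theorem~\ref{thm:repr_evasivity} are stated under the hypothesis $k_i<n_i$; this is not a defect of your argument relative to the paper's.
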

\begin{proof}
    \underline{(i) $\Longrightarrow$ (ii)}: By Theorem \ref{thm:repr_evasivity}, $\mS$ is $(\Lambda_{1,1},1)$-evasive. This means that every point in $L_{\mS}$ has weight equal to $1$, except the point $\langle (1,0)\rangle_{\F_{q^m}}$, which has weight $n_1$. Hence, $L_{\mS}$ is an $n_1$-club of rank $n$ in $\PG(1,q^m)$.

    \underline{(ii) $\Longrightarrow$ (i)}: Let $P=\langle (1,0)\rangle_{\F_{q^m}}\in\PG(1,q^m)$. Without loss of generality, we may assume that $P$ is the element of $L_\mS$ of weight equal to $n_1$. By the definition of a club, all the other points $\langle (x,y)\rangle_{\F_{q^m}}$ in $L_\mS$ have multiplicity $1$, hence $\mS$ is $(\Lambda_{1,1},1)$-evasive. 
    Let $P_1=\langle (x_1,y_1)\rangle_{\F_{q^m}},P_2=\langle (x_2,y_2)\rangle_{\F_{q^m}}\in L_\mS$, with $y_1, y_2\ne 0$ and $y_1\ne y_2$. If $y_2 = \lambda y_1$, with $\lambda \in\F_q$, then we can assume without loss of generality that $P_1=\langle (0,1)\rangle_{\F_{q^m}}$. Then the point $\wt_\mS(P_2)\geq 2$, since $x_2(P+\lambda P_1)=P_2$. This implies that for every pair of points $\langle (x_1,y_1)\rangle_{\F_{q^m}}$, and $\langle (x_2,y_2)\rangle_{\F_{q^m}}$ that are different from $P$, it must be the case that $y_1,y_2$ are linearly independent over $\Fq$. Hence, $(\mS,r_\mS)$ is a representation of $\mU_{1,n_1}(q)\square\mU_{1,n_2}(q)$.
\end{proof}

From Theorem \ref{thm:evasive-clubs}, we see that if $\mS_1\square_X\mS_2$ represents $\mU_{1,n_1}(q)\square\mU_{1,n_2}(q)$, then it has to be an $n_1$-club of rank $n_1+n_2$ in $\PG(1,q^n)$.  We remark that this condition is not easy to handle and, in particular, depends on the choice of representations $\mS_1$ and $\mS_2$ as the next example illustrates.

\begin{example}\label{ex:dependence_from_Gi}
    Consider $\mU_{1,2}(q)$, which is known to be $\F_{q^m}$-representable for $m\geq 2$; see \cite[Example 2.4]{gluesing2022representability}. Moreover, for every $m\geq 2$, any matrix $\begin{pmatrix}
        1 & \alpha
    \end{pmatrix}$, with $\alpha\in\F_{q^m}\setminus\F_q$, is a representation of $\mU_{1,2}(q)$. Suppose that $\mU_{1,2}(q)\square\mU_{1,2}(q)$ is represented over $\F_{q^m}$ by the matrix 
    $$G=\begin{pmatrix}
        G_1 & X \\
        0 & G_2
    \end{pmatrix},$$
    where $G_1$ and $G_2$ represent $\mU_{1,2}(q)$ and $X=\begin{pmatrix}
        x_1 & x_2
    \end{pmatrix}$. As we observed before, the linear set arising from the $q$-system associated with $G$ must be a $2$-club of rank $4$ in $\PG(1,q^m)$ and hence $m\geq 4$. Moreover, we may assume that $x_1=0$.
    Let $q=2$, $m=4$ and $\F_{2^4}=\F_2(\alpha)$ with $\alpha^4=\alpha+1$. Choose $G_1=\begin{pmatrix}
        1 & \alpha
    \end{pmatrix}$,
    $G_2 = \begin{pmatrix}
        1 & \alpha^4 \\
    \end{pmatrix}$ and $x_2=\alpha^{11}$.  We obtain that 
    $$G=\begin{pmatrix}
        1 & \alpha & 0 & \alpha^{11} \\
        0 & 0 & 1 & \alpha^4 
    \end{pmatrix}$$
    is an $\F_{2^4}$-representation of $\mU_{1,2}(2)\square\mU_{1,2}(2)$. 
    In order to check this, we considered the $q$-matroid arising from $G$ and with the aid of \textsc{magma} we found that its cyclic flats are exactly $ \boldsymbol{0} , \F_2^2\oplus  \boldsymbol{0} , \F_2^4$, whose ranks are $0,1,2$, respectively. Hence, they are the cyclic flats of the free product of two uniform $q$-matroids, by Theorem \ref{thm:char_uniform}.    
    In the same way can observe that it is not possible to find any $x_2$ such that the matrix 
    $$G=\begin{pmatrix}
        1 & \alpha & 0 & x_2 \\
        0 & 0 & 1 & \alpha^2
    \end{pmatrix}\in\F_{2^4}^{2\times 4}$$
    represents $\mU_{1,2}(2)\square\mU_{1,2}(2)$ over $\F_{2^4}$. 
\end{example}

We conclude this section by providing an example of a $2$-club of rank $5$ in $\PG(1,2^7)$. To the best of our knowledge, such clubs have not been constructed previously. By Theorem \ref{thm:evasive-clubs}, constructing a club $L_\mS\subseteq \PG(1,2^7)$ is equivalent to finding an $\F_{2^7}$-representation $(\mS,r_{\mS})$ of $\mU_{1,2}(2)\square\mU_{1,3}(2)$. 

Let $\F_{2^7}=\F_2(\alpha)$, where $\alpha^7+\alpha+1=0$. Let $\mU_{1,2}(2)$ be represented over $\F_{2^7}$ by $\begin{pmatrix}
    1 & \alpha
\end{pmatrix}$ and
let $\mU_{1,3}(2)$ be represented over $\F_{2^7}$ by $\begin{pmatrix}
    1 & \alpha^2 & \alpha^8
\end{pmatrix}$. 
Let $\mS$ be the $q$-system associated with 
$$G=\begin{pmatrix}
    1 & \alpha & 0 & \alpha^{36} & \alpha^{24} \\
    0 & 0 & 1 & \alpha^2 & \alpha^8
\end{pmatrix}.$$
With the aid of \textsc{magma}, we find that the cyclic flats of $M[G]$ are exactly $ \boldsymbol{0} , \F_2^2\oplus  \boldsymbol{0} , \F_2^5$. By Theorem \ref{thm:char_uniform}, we have that $\mS$ is an $\F_{2^7}$-representation of $\mU_{1,2}(2)\square\mU_{1,3}(2)$ and hence $L_\mS$ is a $2$-club of rank $5$ in $\PG(1,2^7)$.


\section{Open Problems}\label{sec:conclusions}
We have initiated the study of the free product and on the representibility of the free product. 
Our study gives rise to several research questions. We list a few of them.

\begin{enumerate}
    \item We showed in Example~\ref{eg:dir_sum_not_minimal} that $M_1\oplus M_2$ is not in general minimal in $\mathcal{M}_q(M_1,M_2)$. In fact, if $2\leq\min\{|\mathcal{Z}(M_1)|,|\mathcal{Z}(M_2)|\}$, then it is not minimal. This marked difference between matroids and $q$-matroids, raises the question of deriving a formula (if possible) for the number of isomorphism classes of $q$-matroids on $\mathbb{F}_q^n$ with variables the number of isomorphism classes of matroids on $n$ elements, as well as $q$.
    \item We showed in Example \ref{ex:dependence_from_Gi} that the existence of $X$ depends on the choice of the representations of $M_1$ and $M_2$ and on the field size $q^m$. In particular, it would be interesting to establish the properties that a matrix $X$ should satisfy in order to provide conditions for the representability of the free product.
    \item In Theorem \ref{thm:repr_evasivity} we showed that in order to provide a representation of the free product of two uniform $q$-matroids of ranks $k_1$ and $k_2$, it is necessary to find a $q$-system which is also $(\Lambda_{k_1+k_2-1,k_1},k_1+k_2-1)$ evasive. It is an open problem to know if this condition is also sufficient.
    \item Most of the literature on $i$-clubs involves clubs of maximum rank, which do not exist for all parameters. Most relevant to our results, is to know the smallest field $q^m$, for which it is possible to find an $i$-club on $\PG(1,q^m)$. This will provide the smallest field over which the free product of rank one uniform $q$-matroids is representable.
\end{enumerate}

\bigskip
\bigskip

\section*{Acknowledgments}
The authors are thankful to Alessandro Neri and Paolo Santonastaso for fruitful discussion.
G.~N.~A. was supported by the Swiss National Foundation through grant no. 210966. This publication has emanated from research conducted with the financial support of Science Foundation Ireland (18/CRT/6049) and the European Union MSCA Doctoral Networks, (HORIZON-MSCA-2021-DN-01, Project 101072316).
\bigskip
\bigskip
\bigskip 
\bibliographystyle{abbrv}
\bibliography{references}

\begin{thebibliography}{10}

\bibitem{ALFARANO2022105658}
G.~N. Alfarano, M.~Borello, A.~Neri, and A.~Ravagnani.
\newblock Linear cutting blocking sets and minimal codes in the rank metric.
\newblock {\em J. Comb. Theory Ser. A}, 192:105--658, 2022.

\bibitem{alfarano2022cyclic}
G.~N. Alfarano and E.~Byrne.
\newblock The cyclic flats of a $ q $-matroid.
\newblock {\em J. Algebr. Comb.}, 60(1):97--126, 2024.

\bibitem{alfarano2024representability}
G.~N. Alfarano, R.~Jurrius, A.~Neri, and F.~Zullo.
\newblock Representability of the direct sum of uniform q-matroids.
\newblock {\em arXiv preprint arXiv:2408.00630}, 2024.

\bibitem{bartoli2021evasive}
D.~Bartoli, B.~Csajb{\'o}k, G.~Marino, and R.~Trombetti.
\newblock Evasive subspaces.
\newblock {\em J. Comb. Des.}, 29(8):533--551, 2021.

\bibitem{bartoli2022r}
D.~Bartoli, G.~Micheli, G.~Zini, and F.~Zullo.
\newblock $r$-fat linearized polynomials over finite fields.
\newblock {\em J. Comb. Theory Ser. A}, 189:105609, 2022.

\bibitem{Birkhoff}
G.~Birkhoff.
\newblock {\em Lattice Theory}.
\newblock American Mathematical Society, revised edition, 1948.

\bibitem{blokhuis2000scattered}
A.~Blokhuis and M.~Lavrauw.
\newblock Scattered spaces with respect to a spread in {PG}$(n, q)$.
\newblock {\em Geometriae Dedicata}, 81:231--243, 2000.

\bibitem{byrne2022weighteddesigns}
E.~Byrne, M.~Ceria, S.~Ionica, and R.~Jurrius.
\newblock Weighted subspace designs from $q$-polymatroids.
\newblock {\em J. Comb. Theory Ser. A}, 201:105799, 2024.

\bibitem{byrne2022constructions}
E.~Byrne, M.~Ceria, and R.~Jurrius.
\newblock Constructions of new $q$-cryptomorphisms.
\newblock {\em J. Comb. Theory Ser. B}, 153:149--194, 2022.

\bibitem{ceria2024direct}
M.~Ceria and R.~Jurrius.
\newblock The direct sum of q-matroids.
\newblock {\em J. Algebr. Comb.}, 59(2):291--330, 2024.

\bibitem{crapo1964theory}
H.~Crapo.
\newblock {\em On the theory of combinatorial independence}.
\newblock PhD thesis, Massachusetts Institute of Technology, Department of Mathematics, 1964.

\bibitem{crapo2005free}
H.~Crapo and W.~Schmitt.
\newblock The free product of matroids.
\newblock {\em Eur. J. Comb.}, 26(7):1060--1065, 2005.

\bibitem{crapo2005unique}
H.~Crapo and W.~Schmitt.
\newblock A unique factorization theorem for matroids.
\newblock {\em J. Comb. Theory Ser. A}, 112(2):222--249, 2005.

\bibitem{csajbok2021generalising}
B.~Csajb{\'o}k, G.~Marino, O.~Polverino, and F.~Zullo.
\newblock Generalising the scattered property of subspaces.
\newblock {\em Combinatorica}, 41(2):237--262, 2021.

\bibitem{de2016linear}
M.~De~Boeck and G.~Van~de Voorde.
\newblock A linear set view on {KM}-arcs.
\newblock {\em J. Algebr. Comb.}, 44(1):131--164, 2016.

\bibitem{de2022weight}
M.~De~Boeck and G.~Van~de Voorde.
\newblock The weight distributions of linear sets in {PG}$(1, q^5)$.
\newblock {\em Finite Fields Their Appl.}, 82:102034, 2022.

\bibitem{delsarte1978bilinear}
P.~Delsarte.
\newblock Bilinear forms over a finite field, with applications to coding theory.
\newblock {\em J. Comb. Theory Ser. A}, 25(3):226--241, 1978.

\bibitem{fancsali2006maximal}
S.~Fancsali and P.~Sziklai.
\newblock About maximal partial $2$-spreads in {PG}$(3m- 1, q)$.
\newblock {\em Innovations in Incidence Geometry: Algebraic, Topological and Combinatorial}, 4(1):89--102, 2006.

\bibitem{gacs2003q}
A.~G{\'a}cs and Z.~Weiner.
\newblock On $(q+ t, t)$-arcs of type $(0, 2, t)$.
\newblock {\em Des. Codes Cryptogr.}, 29:131--139, 2003.

\bibitem{ghorpade2020polymatroid}
S.~R. Ghorpade and T.~Johnsen.
\newblock A polymatroid approach to generalized weights of rank metric codes.
\newblock {\em Des. Codes, Cryptogr.}, 88(12):2531--2546, 2020.

\bibitem{glues2021qpolyrankmetcode}
H.~Gluesing-Luerssen and B.~Jany.
\newblock $q$-polymatroids and their relation to rank metric codes.
\newblock {\em J. Algebr. Comb.}, 2022.

\bibitem{gluesing2022representability}
H.~Gluesing-Luerssen and B.~Jany.
\newblock Representability of the direct sum of $ q $-matroids.
\newblock {\em arXiv preprint arXiv:2211.11626}, 2022.

\bibitem{gluesing2023decompositions}
H.~Gluesing-Luerssen and B.~Jany.
\newblock Decompositions of $q$-matroids using cyclic flats.
\newblock {\em arXiv preprint arXiv:2302.02260}, 2023.

\bibitem{gorla2019rank}
E.~Gorla, R.~Jurrius, H.~H. L{\'o}pez, and A.~Ravagnani.
\newblock Rank-metric codes and $q$-polymatroids.
\newblock {\em J. Algebr. Comb.}, 52:1--19, 2020.

\bibitem{jurrius2018defining}
R.~Jurrius and G.~Pellikaan.
\newblock Defining the $q$-analogue of a matroid.
\newblock {\em Electron. J. Comb.}, 25(3), 2018.

\bibitem{korchmaros1990q}
G.~Korchm{\'a}ros and F.~Mazzocca.
\newblock On $(q+ t)$-arcs of type $(0, 2, t)$ in a {D}esarguesian plane of order $q$.
\newblock In {\em Mathematical Proceedings of the Cambridge Philosophical Society}, volume 108, pages 445--459. Cambridge University Press, 1990.

\bibitem{marino2023evasive}
G.~Marino, A.~Neri, and R.~Trombetti.
\newblock Evasive subspaces, generalized rank weights and near {MRD} codes.
\newblock {\em Discrete Math.}, 346(12):113605, 2023.

\bibitem{napolitano2023two}
V.~Napolitano, O.~Polverino, P.~Santonastaso, and F.~Zullo.
\newblock Two pointsets in {PG}$(2, q^n)$ and the associated codes.
\newblock {\em Adv. Math. Commun.}, 17(1), 2023.

\bibitem{napolitano2022clubs}
V.~Napolitano, O.~Polverino, P.~Santonastaso, and F.~Zullo.
\newblock Clubs and their applications.
\newblock {\em SIAM J. Appl. Algebra Geom.}, 8(3):493--518, 2024.

\bibitem{polverino2010linear}
O.~Polverino.
\newblock Linear sets in finite projective spaces.
\newblock {\em Discrete Math.}, 310(22):3096--3107, 2010.

\bibitem{polverino2024fat}
O.~Polverino, P.~Santonastaso, and F.~Zullo.
\newblock On fat linearized polynomials.
\newblock {\em International Workshop on the Arithmetic of Finite Fields}, 2024.

\bibitem{randrianarisoa2020geometric}
T.~H. Randrianarisoa.
\newblock A geometric approach to rank metric codes and a classification of constant weight codes.
\newblock {\em Des. Codes, Cryptogr.}, 88:1331--1348, 2020.

\bibitem{sheekey2019scatterd}
J.~Sheekey.
\newblock ({S}cattered) {L}inear {S}ets are to {R}ank-{M}etric {C}odes as {A}rcs are to {H}amming-{M}etric {C}odes.
\newblock In M.~Greferath, C.~Hollanti, and J.~Rosenthal, editors, {\em Oberwolfach Report No. 13/2019}, 2019.

\bibitem{shiromoto2019codes}
K.~Shiromoto.
\newblock Codes with the rank metric and matroids.
\newblock {\em Des. Codes, Cryptogr.}, 87(8):1765--1776, 2019.

\bibitem{welsh1969bound}
D.~J. Welsh.
\newblock A bound for the number of matroids.
\newblock {\em J. Comb. Theory}, 6(3):313--316, 1969.

\bibitem{white_matroids}
N.~White.
\newblock {\em Theory of matroids}.
\newblock Number~26. Cambridge University Press, 1986.

\end{thebibliography}

\end{document}